\title{Spectral equivalences through nonstandard samplings}
\author{Fabrice Nonez}
\address{Department of Mathematics and Statistics, Concordia University, Montreal, Canada}
\email{fabrice.nonez@mail.concordia.ca}
\keywords{Spectral theorem, Nonstandard analysis, Functional analysis, Operator theory}
\newtheorem{thm}{Theorem}[section]    
\newtheorem{theorem}{Theorem}[section] 
\newtheorem{lemma}[thm]{Lemma}        
\newtheorem{prop}[thm]{Proposition}
\theoremstyle{definition}
\newtheorem{definition}[thm]{Definition}
\theoremstyle{remark}
\newtheorem{remark}[thm]{Remark}
\newcounter{pointnumber}
\begin{document}
\begin{abstract}    
The goal of this paper is to introduce a process that generates, given Hilbert space $H$ and symmetric operator $A$, an embedding of $H$ into an $L_2$-space through which $A$ is extended by a multiplication operator. This process will depend on two parameters, the nonstandard sampling and the standard-biased scale. 

We will use that process to prove diverse versions of the general spectral theorem, showing its appeal. Furthermore, through landmark examples, we will observe that by carefully tweaking the two parameters, we can make the resulting spaces, embeddings and operators quite explicit and natural.
\end{abstract}
\maketitle
\section{Overview}
We start by stating this version of the spectral theorem.

\begin{theorem}[Spectral theorem]\label{theorem_spectral_symmetric}
	Let $H$ be a separable infinite-dimensional Hilbert space, and let $A$ be a densely-defined symmetric operator on $H$. 
	
	Then, there exists a compact metric space $(\Omega,d)$ endowed with a probability measure $\mu$ on its Borel $\sigma$-algebra, a borelian measurable function $m:\Omega\rightarrow\mathbb{R}$ generating self-adjoint multiplication operator $T$ on $L_2(\Omega,\mu)$, and an isometry $U:H\rightarrow L_2(\Omega,\mu)$ such that $T\circ U$ extends $U\circ A$. 
\end{theorem}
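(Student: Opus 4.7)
The plan is to prove the theorem by a nonstandard construction: approximate $A$ by a hyperfinite-dimensional symmetric matrix, apply the finite-dimensional spectral theorem internally by transfer, and then descend to a standard compact measure space through a Loeb construction combined with a compactifying scale on the real line.

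To begin, I would fix an orthonormal basis $(e_n)_{n\geq 1}$ of $H$ contained in $\mathrm{dom}(A)$, which exists by separability of $H$ together with density of $\mathrm{dom}(A)$. Inside a sufficiently saturated nonstandard extension, I would fix a hyperfinite integer $N$ — this plays the role of the \emph{nonstandard sampling} — and consider the internal matrix $M_{ij}=\langle {}^{*}\!A\,e_i,e_j\rangle$ on the internal span of $(e_i)_{i\leq N}$. Because $A$ is symmetric, $M$ is internally Hermitian; by transfer of the finite-dimensional spectral theorem it is internally unitarily diagonalizable, producing an internal orthonormal eigenbasis $(f_k)_{k\leq N}$ with eigenvalues $(\lambda_k)_{k\leq N}\subset {}^{*}\mathbb{R}$.

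To descend to a standard object I would fix a \emph{standard-biased scale}: a continuous injection $\psi:\mathbb{R}\hookrightarrow[a,b]$ into a compact interval (for example $\arctan$), set $\widehat{\psi}(\lambda)=\mathrm{st}\,{}^{*}\!\psi(\lambda)\in[a,b]$, and push the Loeb measure of the internal uniform probability on $\{1,\dots,N\}$ forward through $k\mapsto\widehat{\psi}(\lambda_k)$ to obtain a Borel probability measure $\mu$ on the compact metric space $\Omega=[a,b]$. Then $m=\psi^{-1}$, suitably extended on the boundary, is a Borel function generating the self-adjoint multiplication operator $T$ on $L_2(\Omega,\mu)$, while the assignment $v\mapsto\bigl(\omega\mapsto\mathrm{st}\,\langle v,f_{k(\omega)}\rangle\bigr)$ descends to the isometry $U$. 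Internal Parseval makes $U$ norm-preserving, and the internal eigenvalue equation $\langle {}^{*}\!A\,v,f_k\rangle=\lambda_k\langle v,f_k\rangle$ lifts to $T\circ U=U\circ A$ on $\mathrm{dom}(A)$.

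The main obstacle, as I foresee it, is handling the ``infinite part'' of the nonstandard spectrum: basis vectors $f_k$ attached to unlimited eigenvalues $\lambda_k$ are collapsed to boundary points of $\Omega$ by $\widehat{\psi}$, and one must verify both that $Uv$ truly lies in $L_2(\Omega,\mu)$ and that $TUv=UAv$ rather than merely approximating it. This demands an $S$-integrability argument anchored on the finiteness of $\|Av\|$ for $v\in\mathrm{dom}(A)$, which forces the Loeb-integrability of the weighted coefficients $\lambda_k^2|\langle v,f_k\rangle|^2$; the standard-biased scale is precisely the tool tailored to absorb this issue. Once it is secured, the remaining verifications — Borel measurability of $m$, the isometry property of $U$, and the extension relation — are routine nonstandard-to-standard descents.
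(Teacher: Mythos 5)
Your high-level strategy --- hyperfinite approximation, internal diagonalization, descent via a Loeb construction --- matches the paper's, but the concrete construction has two gaps that are fatal as stated.

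First, the uniform probability on $\{1,\dots,N\}$ makes $U$ the zero map rather than an isometry: for $v\in H$ internal Parseval gives $\sum_{k\leq N}|\langle v,f_k\rangle|^2\approx\|v\|^2$, which is limited, so the internal mean $\frac{1}{N}\sum_{k\leq N}|\langle v,f_k\rangle|^2$ is infinitesimal, and by the Fatou-type bound for Loeb integrals $\int|\operatorname{st}\langle v,f_k\rangle|^2\,d\mu_L=0$, so $Uv=0$ in $L_2$ for every $v$. What the paper actually calls the \emph{standard-biased scale} is a pair $((\tilde{e}_j)_j,(\tilde{c}_j)_j)$ of nearstandard vectors and fast-decaying weights, used to define the internal measure $\tilde{\mu}(\{f\})=\sum_j\tilde{c}_j|(\tilde{e}_j,f)|^2$ and the embedding $\tilde{U}(x)(f)=(x,f)/\sqrt{\tilde{\mu}(f)}$; that weighting biases the Loeb mass toward the standard-observable part of the eigenbasis and is what secures both the isometry property and the almost-everywhere finiteness of the eigenvalue function. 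Your $\arctan$ compactification addresses neither point.

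Second, collapsing the index set via $k\mapsto\widehat{\psi}(\lambda_k)$ is too coarse for $U$ to descend: $k\mapsto\operatorname{st}\langle v,f_k\rangle$ is not constant on fibers, since eigenvectors with equal or infinitely close eigenvalues can have completely different overlaps with $v$. Taking $A=0$ makes this unrepairable within your scheme: all $\lambda_k=0$, so $\Omega$ reduces to a single point, and $L_2(\Omega,\mu)\cong\mathbb{K}$ cannot receive an isometric copy of an infinite-dimensional $H$. The paper's hull space instead quotients $\tilde{\Omega}$ by the finer relation $f_1\sim f_2$ iff $\operatorname{st}((\tilde{U}(\tilde{e}_j))(f_1))=\operatorname{st}((\tilde{U}(\tilde{e}_j))(f_2))$ for every $j\in\mathbb{N}$, which is exactly what makes $U_L(e_j)$ factor through the quotient; one then extends $U$ by density and pushes the multiplication function down by a countable partition argument tracking where the $\hat{U}_j$ are nonzero.
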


The formulation differs from the usual version in that $A$ is only assumed to be symmetric instead of self-adjoint, at the cost of $U$ not always being unitary. This also establishes the well-known fact that any symmetric operator has a self-adjoint extension in a larger Hilbert space. 

The main scope of this text is to consider a process that generates the objects described in Theorem~\ref{theorem_spectral_symmetric}, given a separable Hilbert space $H$ and a densely-defined symmetric operator $A$. This process will depend on two important parameters: a sampling for $A$ and a standard-biased scale compatible with the sampling. By proving that such parameters always exist, we will also have a direct proof of Theorem~\ref{theorem_spectral_symmetric}.

Contrasting with classical proofs, which often use the Cayley transform and spectrum analysis, this process relies more directly on the finite-dimensional version of the spectral theorem. It works both for bounded operators and unbounded ones, and for real Hilbert spaces as well as complex ones. Furthermore, it is surprisingly constructive, in that it may provide a path to construct the objects, as we will see in the examples.

The method is inspired by previous work in nonstandard analysis, concerning the resolution version of the spectral theorem. Most notably, the recent work done in \cite{GOLDBRING2021590} shows that nonstandard methods are suitable to prove the spectral theorem for unbounded self-adjoint operators. Even more recently, another proof has been considered in \cite{matsunaga2024shortnonstandardproofspectral}. Proofs for bounded operators can also be found in \cite{bernstein_spectral} and \cite{mooreboundedops}.  In all of these works, as well as in this one, the initial idea is to transfer the finite-dimensional version of the spectral theorem and apply it on a suitable choice of hyperfinite space and internal symmetric map. 

A version of this idea that directly uses ultraproducts can be found in~\cite{hirvonen2022ultraproductsspectraltheoremrigged}. We think that some of the ideas we expose could help with the questions found in that body of work, notably concerning generalizations to unbounded self-adjoint operators.

In Section \ref{section_sampling}, we define the parameters, which are the sampling for $A$ and the compatible scale. Then, in Sections \ref{section_loebspace} and \ref{section_hullspace}, we describe the process of this paper. Specifically, in Section \ref{section_loebspace}, we use the sampling and the scale to create a suitable Loeb space, and prove a version of Theorem~\ref{theorem_spectral_symmetric}, albeit with the $L_2$ space not being separable in many cases. In Section \ref{section_hullspace}, using hull techniques, we create an interesting compact metric space from that Loeb space, completing the process and the proof of Theorem~\ref{theorem_spectral_symmetric}.

Then, in Section \ref{section_sa}, we consider the case where $A$ is self-adjoint. Using a spectral resolution, whose existence is shown in Appendix \ref{appendix_pullback} using Theorem~\ref{theorem_spectral_symmetric}, we create a suitable sampling and scale for which the induced isometry is surjective, proving the usual version of the spectral theorem.  Finally, in Sections \ref{section_shift} and \ref{section_differential}, we look at some relevant classic examples for which tweaking the parameters of the process can provide an explicit, natural and intuitive realization of Theorem~\ref{theorem_spectral_symmetric}.
\subsection{Notation and conventions}
In the paper, some familiarity with nonstandard analysis will be assumed. For example, internal objects, hyperfinite sets, and Loeb measure theory will be used. A comprehensive theory about nonstandard analysis can be found in \cite{arkeryd2012nonstandard}. Of note, the notation will be consistent with that book, whereas for a standard object $a$, we will note its extension by ${}^*a$. 

Sometimes, ${}^*$ may be omitted when there is almost certainly no ambiguity . For example, if $r\in{}^*\mathbb{R}$ is nonstandard, $\cos(r)$ necessarily means $({}^*\cos)(r)$. About all other times, ${}^*$ will be use everywhere it is meant to be. If the expression "Let $\epsilon\in\mathbb{R}_{>0}$" is written, $\epsilon$ is meant to be standard.

In the construction, we will work with three probability spaces: an internal one, the Loeb-constructed one and the one from the nonstandard hull. We will try to denote objects related to the internal space with tilde symbol ( $\tilde{}$ ), ones related to Loeb space with subscript $L$ (${}_L$) and ones related to the hull space with the hat symbol ( $\hat{}$ ).

Throughout the paper, we assume $H$ is an infinite-dimensional separable Hilbert space, with underlying field $\mathbb{R}$ or $\mathbb{C}$, which we will denote by $\mathbb{K}$. Sometimes, we may use the conjugate $\bar{z}$ or the real part $\Re(z)$. Both are just the identity function if $\mathbb{K}=\mathbb{R}$. For closed subspace $V$, the operator $\operatorname{proj}_V$ on $H$ denotes the self-afjoint projection with image $V$.

Also, $A:\operatorname{dom}(A)\rightarrow H$ is a densely-defined symmetric linear operator on $H$. This operator is not assumed to be either bounded or self-adjoint, except when explicitly stated, notably in Section \ref{section_sa} and Appendix \ref{appendix_pullback}. Details about Hilbert spaces and unbounded operators can be found in \cite{conway2019course}.

We now fix ${}^*$, an (at least) $\aleph_1$-saturated enlargement\footnote{The existence of a suitable hyperfinite set is only used once to simplify the construction of a sampling. With more finesse, such a construction is possible given only countable saturation, so any ultrapower constructed using a countably incomplete ultrafilter would work well.} of objects in a superstructure containing at least $\mathbb{K}$ and $H$.

Open ends of intervals will use square brackets, for example $[1,2[$ instead of $[1,2)$. We will assume that $\mathbb{N}$ starts at $1$. For $k\in\mathbb{N}$ (or ${}^*\mathbb{N}$), we will use $[k]=\{1,2,3,\dots, k\}$. Also, for $k,m\in\mathbb{Z}$, we will denote $[k .. m[ = \{l\in\mathbb{Z}\; |\; k\leq l < m\}=\mathbb{Z}\cap [k,m[$. In such a set, we will often use $\oplus$ and $\ominus$ for the cyclic addition and subtraction (specifically, $n_1\oplus n_2\equiv n_1+ n_2$ mod $m-k$ and $n_1\oplus n_2 \in [k..m[$).

For any given function (standard, internal or external) $f$, we will denote its graph by $G(f)$. For given operators $T_1$ and $T_2$, we say that $T_1\leq T_2$ if $G(T_1)\subseteq G(T_2)$.
\section{Samplings and scales}\label{section_sampling}

Here, we define the main parameters we will use throughout this paper, the sampling for $A$ and the compatible scale. Those objects allow a representation of $A$ in a hyperfinite world, and will later allow the construction of a relevant standard measure.

\begin{definition}\label{definition_sampling}
	We call the internal triplet $(\tilde{H},\tilde{A},\tilde{\Omega})$ a sampling for $A$ if:
	
	\begin{enumerate}
		\item \label{sampling_def_H} $\tilde{H}<{}^*H$, and ${}^*\operatorname{dim}(\tilde{H})\in{}^*\mathbb{N}$. 
		\item \label{sampling_def_A} $\tilde{A}:\tilde{H}\rightarrow\tilde{H}$ is a ${}^*$linear symmetric operator on $\tilde{H}$.
		\item \label{sampling_def_Omega} $\tilde{\Omega}\subset\tilde{H}$ is an orthonormal ${}^*$basis of $\tilde{H}$ formed by eigenvectors of $\tilde{A}$.
		
		\item \label{sampling_def_approx} $G(A)\subset$ st$(G(\tilde{A}))$. In other words, for any $x\in \operatorname{dom}(A)$, there exists $\tilde{x}\in \tilde{H}$ such that $x=$st$(\tilde{x})$ and $Ax=$st$(\tilde{A}\tilde{x})$.
		\setcounter{pointnumber}{\value{enumi}}
	\end{enumerate}
\end{definition}

\begin{prop}
	There exists a sampling for $A$.
\end{prop}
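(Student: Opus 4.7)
The plan is to build $\tilde{H}$ as a hyperfinite-dimensional internal ${}^*$subspace of ${}^*\operatorname{dom}(A)$, define $\tilde{A}$ as the symmetric compression of ${}^*A$ to $\tilde{H}$, and obtain $\tilde{\Omega}$ by transfer of the finite-dimensional spectral theorem. The challenge is to make $\tilde{H}$ ``large enough'' that every pair $(x,Ax)$ with $x\in\operatorname{dom}(A)$ is witnessed through standard parts.

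First, I endow $\operatorname{dom}(A)$ with its graph norm $\|y\|_A=\sqrt{\|y\|^2+\|Ay\|^2}$. The graph of $A$ is a metric subspace of the separable Hilbert space $H\oplus H$, hence separable; pick a countable graph-dense subset $\{y_n\}_{n\in\mathbb{N}}\subset\operatorname{dom}(A)$. Because the graph norm dominates the $H$-norm on $\operatorname{dom}(A)$ and $\operatorname{dom}(A)$ is $H$-dense in $H$, the sequence $\{y_n\}$ is also $H$-dense in $H$.

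Next (the single use of saturation flagged in the footnote), the internal family of hyperfinite subsets of ${}^*\operatorname{dom}(A)$ containing any prescribed finite part of $\{{}^*y_n\}_{n\in\mathbb{N}}$ has the finite intersection property, so by $\aleph_1$-saturation some internal hyperfinite $F\subset{}^*\operatorname{dom}(A)$ contains every ${}^*y_n$. Set $\tilde{H}={}^*\operatorname{span}(F)\subset{}^*\operatorname{dom}(A)$; this has hyperfinite ${}^*$dimension, giving~\ref{sampling_def_H}. Let $P={}^*\operatorname{proj}_{\tilde{H}}$ and define $\tilde{A}=P\circ{}^*A|_{\tilde{H}}$. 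Using $Pv=v$ on $\tilde{H}$, the ${}^*$self-adjointness of $P$, and the ${}^*$symmetry of ${}^*A$, a direct computation yields $\langle \tilde{A}u,v\rangle=\langle {}^*Au,Pv\rangle=\langle {}^*Au,v\rangle=\langle u,{}^*Av\rangle=\langle u,\tilde{A}v\rangle$ for $u,v\in\tilde{H}$, giving~\ref{sampling_def_A}. For~\ref{sampling_def_Omega}, I transfer the finite-dimensional spectral theorem: on a ${}^*$finite-dimensional ${}^*$inner product space over ${}^*\mathbb{K}$ any ${}^*$symmetric operator admits an internal orthonormal ${}^*$basis of eigenvectors.

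The main obstacle is~\ref{sampling_def_approx}. Fix standard $x\in\operatorname{dom}(A)$, and for each standard $k\in\mathbb{N}$ pick $n_k$ with $\|y_{n_k}-x\|,\|Ay_{n_k}-Ax\|<1/(2k)$. Then ${}^*y_{n_k}\in\tilde{H}$ and, since $P$ is a contraction,
\[ \|\tilde{A}({}^*y_{n_k})-{}^*(Ax)\|\leq\|{}^*(Ay_{n_k})-{}^*(Ax)\|+\operatorname{dist}({}^*(Ax),\tilde{H}). \]
The first term is below $1/(2k)$, and the second is infinitesimal because $H$-density of $\{y_n\}$ in $H$ lets us ${}^*$-approximate ${}^*(Ax)$ by finite ${}^*$linear combinations of ${}^*y_n$'s inside $\tilde{H}$ within any standard accuracy. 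Hence for every standard $k$ the internal condition ``$\exists\tilde{x}\in\tilde{H}:\|\tilde{x}-{}^*x\|<1/k\wedge\|\tilde{A}\tilde{x}-{}^*(Ax)\|<1/k$'' is satisfiable; by $\aleph_1$-saturation over the countable family indexed by $k$, a single $\tilde{x}\in\tilde{H}$ satisfies all of them, yielding $\operatorname{st}(\tilde{x})=x$ and $\operatorname{st}(\tilde{A}\tilde{x})=Ax$. I expect this simultaneous matching of $\tilde{x}\approx{}^*x$ and $\tilde{A}\tilde{x}\approx{}^*(Ax)$ to be the delicate point, and it is precisely why the initial sequence $\{y_n\}$ is chosen to be dense in the graph norm rather than just the $H$-norm.
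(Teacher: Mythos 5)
Your proof is correct and takes a genuinely different route from the paper's. The paper invokes the enlargement property of ${}^*$ to obtain a ${}^*$finite set $S$ with $\operatorname{dom}(A)\subset S\subset{}^*\operatorname{dom}(A)$, so that $\operatorname{dom}(A)\subset\tilde{H}$ and condition~(\ref{sampling_def_approx}) becomes immediate by taking $\tilde{x}=x$ (together with the observation that $\operatorname{proj}_{\tilde{H}}(Ax)\simeq Ax$ because $\operatorname{dom}(A)$ is dense). You instead assemble $\tilde{H}$ around a countable \emph{graph}-dense sequence $\{y_n\}$, which requires only $\aleph_1$-saturation, and then recover~(\ref{sampling_def_approx}) with a second countable-saturation (equivalently, overspill) step producing a single $\tilde{x}$ that simultaneously approximates $x$ and $Ax$. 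Your argument is in effect the ``more finesse'' construction alluded to in the paper's footnote, trading the paper's brevity for the weaker model-theoretic hypothesis of countable saturation alone (so any ultrapower over a countably incomplete ultrafilter suffices). You correctly identify that graph-norm density rather than mere $H$-density is what makes the simultaneous control of $x$ and $Ax$ possible, and your contraction estimate for $\|\tilde{A}({}^*y_{n_k})-{}^*(Ax)\|$, using that $\operatorname{proj}_{\tilde{H}}$ is nonexpanding and that $\operatorname{dist}({}^*(Ax),\tilde{H})$ is infinitesimal, is valid.
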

\begin{proof}
	Let $\tilde{H}={}^*$span$(S)$, where $S$ is any ${}^*$finite subset such that dom$(A)\subset S\subset {}^*\operatorname{dom}(A)$. Then, condition (\ref{sampling_def_H}) holds, and dom$(A)\leq \tilde{H}\leq{}^*$dom$(A) \leq {}^*H$. 
	
	Now, let $\tilde{A}:= (\operatorname{proj}_{\tilde{H}}\circ {}^*A)|_{\tilde{H}}:\tilde{H}\rightarrow \tilde{H}$. Since $\tilde{H}$ is internal, $\operatorname{proj}_{\tilde{H}}$ is internal, and so $\tilde{A}$ is an internal linear operator. Furthermore, for all $x,y \in \tilde{H}$:
	\begin{align*}
		(\tilde{A}x,y)&=(\operatorname{proj}_{\tilde{H}}{}^*Ax,y) =({}^*Ax,y) =(x,{}^*Ay) =(x,\operatorname{proj}_{\tilde{H}}{}^*Ay) = (x,\tilde{A}y).
	\end{align*}
	Hence, $\tilde{A}$ is symmetric and condition (\ref{sampling_def_A}) holds. By transfer principle of finite-dimensional spectral theorem applied to $\tilde{A}$, there exists $\tilde{\Omega}$, an orthornormal ${}^*$basis of $\tilde{H}$ consisting of ${}^*$eigenvectors of $\tilde{A}$, and so we fix an arbitrary one. With that, condition (\ref{sampling_def_Omega}) holds.
	
	Furthermore, for any $x\in H$, and for all $\epsilon\in\mathbb{R}_{>0}$, there exists $y\in \tilde{H}$ such that $\|x-y\|<\epsilon$.  Indeed, $\tilde{H}\supset\operatorname{dom}(A)$, which is dense in $H$. Since projection minimizes distance, we have that $\|x-\operatorname{proj}_{\tilde{H}}x\|$ is infinitesimal and $x=\operatorname{st}(\operatorname{proj}_{\tilde{H}} x)$. Therefore, for any $x\in\operatorname{dom}(A)\subset \tilde{H}$, we take $\tilde{x}:=x$ so that $Ax\in H$ and $Ax=\operatorname{st}(\tilde{A} x)=\operatorname{st}(\tilde{A} \tilde{x})$.

	We get that condition (\ref{sampling_def_approx}) holds, and we conclude that $(\tilde{H},\tilde{A},\tilde{\Omega})$ is a sampling for $A$.
\end{proof}
\begin{remark}
	The implementations of $\tilde{H}$ and $\tilde{A}$ that were given here are the ones that were mainly considered in previous works \cite{bernstein_spectral}, \cite{mooreboundedops}, \cite{GOLDBRING2021590}, \cite{matsunaga2024shortnonstandardproofspectral}. Interestingly, Moore, in \cite{mooreboundedops}, defined the general concept of an hyperfinite extension of $A$ in the nonstandard hull, which has important similarities with samplings.  
\end{remark}
\begin{definition}\label{definition_scale}
	Let $N\in{}^*\mathbb{N}\setminus\mathbb{N}$, $(\tilde{e}_j)_{j\in[N]}$ be a hyperfinite sequence in ${}^*H$, and $(\tilde{c}_j)_{j\in[N]}$ be a hyperfinite sequence in ${}^*\mathbb{R}_{\geq0}$. We say that the pair formed by $((\tilde{e}_j)_{j\in[N]},(\tilde{c}_j)_{j\in[N]})$ is a standard-biased scale if:
	\begin{enumerate}
		\setcounter{enumi}{\value{pointnumber}}
		\item\label{scale_def_NS} For each $j\in\mathbb{N}$, both $\tilde{e}_j$ and $\tilde{c}_j$ are nearstandard, as respective elements of ${}^*H$ and ${}^*\mathbb{R}$. Furthermore, $\operatorname{st}(\tilde{c}_j \tilde{e}_j)\neq0$.
		\item\label{scale_def_dense} $(\operatorname{st}(\tilde{e}_j))_{j\in\mathbb{N}}$ spans a dense subset in $H$.
		\item \label{scale_def_proba} $\sum_{j\in[N]}\tilde{c}_j\|\tilde{e}_j\|^2=1$.
		\item \label{scale_def_bias} For any infinite $K$, $\operatorname{st}(\sum_{K<j\leq N} \tilde{c}_j\|\tilde{e}_j\|^2)=0$. 
		
		Equivalently, $\sum_{j\in\mathbb{N}}\operatorname{st}(\tilde{c}_j)\|\operatorname{st}(\tilde{e}_j)\|^2=1$.
		
		\setcounter{pointnumber}{\value{enumi}}
	\end{enumerate}

	Furthermore, we say that this scale is compatible with sampling $(\tilde{H},\tilde{A},\tilde{\Omega})$ if :
	\begin{enumerate}
		\setcounter{enumi}{\value{pointnumber}}
		\item \label{scale_def_compat_H} For each $j\in [N]$, $\tilde{e}_j\in\tilde{H}$.
		\item \label{scale_def_compat_A} For each $j\in \mathbb{N}$, $\tilde{A}\tilde{e}_j$ is nearstandard.
		\item \label{scale_def_compat_Omega} For each $f\in\tilde{\Omega}$, there exists $j\in[N]$ such that $\tilde{c}_j>0$ and $(\tilde{e}_j,f)\neq0$.
	\end{enumerate}

\end{definition}
\begin{remark}
	If $(\tilde{H},\tilde{A},\tilde{\Omega})$ is a sampling for $A$, it is straightforward to show that $\operatorname{st}(G(\tilde{A}))\subset G(A^*)$, where $A^*$ is the adjoint of $A$. Therefore, the condition that $\tilde{A}\tilde{e}_j$ is nearstandard is equivalent to $\operatorname{st}(\tilde{e}_j)\in\operatorname{dom}(A^*)$ and $\tilde{A}\tilde{e}_j\simeq A^* (\operatorname{st}(\tilde{e}_j))$.
\end{remark}
\begin{prop}
	For any given sampling for $A$, there exists a standard-biased scale compatible with it.
\end{prop}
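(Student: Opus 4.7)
The plan is to build the scale in two phases: first a ``standard-indexed'' portion built around a dense sequence in $\operatorname{dom}(A)$, then an appendage containing $\tilde{\Omega}$ carrying infinitesimal weights to secure condition (\ref{scale_def_compat_Omega}).

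I would start by picking a countable dense sequence $(e_n)_{n\in\mathbb{N}}$ in $\operatorname{dom}(A)$ with each $e_n\neq 0$. Using condition (\ref{sampling_def_approx}) of the sampling, I lift each $e_n$ to some $\tilde{e}'_n\in\tilde{H}$ satisfying $\operatorname{st}(\tilde{e}'_n) = e_n$ and $\operatorname{st}(\tilde{A}\tilde{e}'_n) = Ae_n$; rescaling by the factor $\|e_n\|/\|\tilde{e}'_n\|$ (which differs from $1$ by an infinitesimal) lets me assume $\|\tilde{e}'_n\| = \|e_n\|$ exactly. I then pick standard positive weights $c_n > 0$ decreasing rapidly enough that $c_n\|e_n\|^2 < 2^{-n-1}$, so that $S_0 := \sum_{n\in\mathbb{N}} c_n\|e_n\|^2$ is a finite positive real.

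The main step is to extend this countable standard data into a hyperfinite pair with infinitesimal tail, using $\aleph_1$-saturation. For each $n\in\mathbb{N}$, let $B_n$ be the internal set of tuples $(K,(\tilde{v}_j)_{j\in[K]},(\tilde{w}_j)_{j\in[K]})$ with $K\in{}^*\mathbb{N}_{\geq n}$, $\tilde{v}_j\in\tilde{H}$, $\tilde{w}_j\in{}^*\mathbb{R}_{\geq 0}$, such that $(\tilde{v}_j,\tilde{w}_j) = (\tilde{e}'_j,c_j)$ for $j\in[n]$ and $\sum_{n<j\leq K}\tilde{w}_j\|\tilde{v}_j\|^2 \leq 2^{-n}$. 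Each $B_n$ is witnessed nonempty by $(n,(\tilde{e}'_j)_{j\in[n]},(c_j)_{j\in[n]})$, and the exact norm equality together with $c_{n+1}\|e_{n+1}\|^2 < 2^{-n-1}$ gives $B_{n+1}\subseteq B_n$. Saturation then provides a hyperfinite pair $((\tilde{e}^{**}_j)_{j\in[L]},(\tilde{d}_j)_{j\in[L]})$ prolonging the standard data, with the tail past any standard $n$ being at most $2^{-n}$; in particular the full sum is $\simeq S_0$ and noninfinitesimal.

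Finally, I enumerate $\tilde{\Omega} = (f_k)_{k\in[M]}$ with $M = {}^*\dim(\tilde{H})$, set $N = L + M$, and define $\tilde{e}_j = \tilde{e}^{**}_j$, $\tilde{c}_j = \tilde{d}_j$ for $j\in[L]$, while $\tilde{e}_j = f_{j-L}$, $\tilde{c}_j = 1/M^2$ for $j\in[L+1,N]$; then I divide all weights by the total $Z$ to enforce (\ref{scale_def_proba}). The appended block carries pre-normalization weight $1/M$, infinitesimal, so $Z\simeq S_0$ is noninfinitesimal and the remaining conditions follow by inspection---nearstandardness, denseness and (\ref{scale_def_compat_A}) come from the $\tilde{e}'_n$, the bias (\ref{scale_def_bias}) combines the infinitesimal tails of both blocks, and (\ref{scale_def_compat_Omega}) uses $(\tilde{e}_{L+k},f_k) = 1$ with positive weight $1/(M^2 Z)$. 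The main obstacle is arranging the decreasing family $B_n$ so that infinitesimal perturbations inherent to the lifts do not break the inclusions, which is precisely what the exact-norm rescaling at the start is designed to eliminate.
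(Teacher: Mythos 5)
Your proof is correct, and it takes a genuinely different route from the paper. The paper's construction fixes an orthonormal Hilbert basis $(e_j)_{j\in\mathbb{N}}\subset\operatorname{dom}(A)$, lifts it, extends the lift by overspill to a hyperfinite sequence and then to a ${}^*$finite sequence generating $\tilde{H}$, and sets $\tilde{c}_j=\frac{1}{2^j(1-2^{-N})\|\tilde{e}_j\|^2}$; with this formula each term contributes exactly $\frac{1}{2^j(1-2^{-N})}$, so Properties~(\ref{scale_def_proba}) and~(\ref{scale_def_bias}) fall out of the geometric series automatically, while (\ref{scale_def_compat_Omega}) comes from taking the extension to span $\tilde{H}$. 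You instead work with an arbitrary nonzero dense sequence, choose standard weights $c_n$ up front and \emph{impose} the summability/tail-decay requirement directly as the defining condition of a countable decreasing family $B_n$ of internal sets, invoking $\aleph_1$-saturation to get an infinite hyperfinite prolongation with infinitesimal tails past every standard $n$; you then secure (\ref{scale_def_compat_Omega}) by explicitly appending $\tilde{\Omega}$ with uniform weight $1/M^2$, whose aggregate contribution $1/M$ is infinitesimal, and normalizing. Both strategies share the core idea (lift a dense standard sequence to $\tilde{H}$, extend hyperfinitely with standard-biased weights, and pad to cover $\tilde{\Omega}$), but your decay comes from the saturation family rather than the weight formula, and your coverage of $\tilde{\Omega}$ is via a direct, clean appendage rather than a spanning extension. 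The price is a somewhat more involved saturation argument — notably, you correctly realize that the inclusions $B_{n+1}\subseteq B_n$ require exact norm agreement, hence your preliminary rescaling of the lifts, and the inequality $c_{n+1}\|e_{n+1}\|^2+2^{-(n+1)}\leq 2^{-n}$ — while the benefit is that you never need an orthonormal basis and the weight formula never involves the (possibly delicate) internal norms $\|\tilde{e}_j\|$.
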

\begin{proof}
	Let  $(\tilde{H},\tilde{A},\tilde{\Omega})$  be any sampling for $A$.
	
	First, let $(e_j)_{j\in\mathbb{N}}$ be a Hilbert basis of $H$ such that for all $j\in\mathbb{N}$, $e_j\in\operatorname{dom}(A)$. It exists, since $H$ is separable and $\operatorname{dom}(A)$ is dense. Using Property~(\ref{sampling_def_approx}) of Definition~\ref{definition_sampling}, for each standard $j$, choose $\tilde{e}_j$ such that $\operatorname{st}(\tilde{e}_j)=e_j$, $Ae_j=\operatorname{st}(\tilde{A}\tilde{e}_j)$.
	
	By countable saturation and overspill, this can be extended to a non-zero hyperfinite sequence $(\tilde{e}_j)_{j\in[K]}$ in $\tilde{H}$. As with the finite-dimensional case, we can further extend it to obtain  an internal ${}^*$finite sequence $(\tilde{e}_j)_{j\in[N]}$ that generates $\tilde{H}$.
	
	Let $\tilde{c}_j=\frac{1}{2^j(1-2^{-N})\|\tilde{e}_j\|^2}$ for $j\in[N]$.  We note $\tilde{c}_j\simeq\frac{1}{2^j\|e_j\|^2}=\frac{1}{2^j}$ for any standard $j$. Then, we know, by definitions, that conditions (\ref{scale_def_NS}), (\ref{scale_def_dense}), (\ref{scale_def_compat_H}) and (\ref{scale_def_compat_A}) hold, while conditions (\ref{scale_def_proba}), (\ref{scale_def_bias}) and (\ref{scale_def_compat_Omega}) are easily verified.
	
	We conclude that $((\tilde{e}_j)_{j\in[N]},(\tilde{c}_j)_{j\in[N]})$ forms a standard-biased scale compatible with $(\tilde{H},\tilde{A},\tilde{\Omega})$.
\end{proof}

\begin{remark}
	There will be multiple choices of samplings and suitable scales for a given $A$. The goal here is to define the next objects and properties in relation to any one of the choices. Ultimately, a good choice of sampling and scale will facilitate the description of the resulting space, as we will see in the examples of Sections \ref{section_shift} and \ref{section_differential}. The choice also affects the surjectivity of the constructed isometry of Theorem~\ref{theorem_spectral_symmetric}, as we will see in Section \ref{section_sa}.
	
	For Sections \ref{section_loebspace} and \ref{section_hullspace}, the sampling for $A$ given by $(\tilde{H},\tilde{A},\tilde{\Omega})$ and the compatible scale $((\tilde{e}_j)_{j=1}^N,(\tilde{c}_j)_{j=1}^N)$ will be fixed, but arbitrary. All objects defined in those sections will be considered in relation to this choice.
	
	Furthermore, for $j\in\mathbb{N}$ we will note $e_j=\operatorname{st}(\tilde{e}_j)\in\operatorname{dom}(A)$, and $c_j=\operatorname{st}(\tilde{c}_j)\in\mathbb{R}_{>0}$. We also note the internal eigenvalue function $\tilde{\lambda}:\tilde{\Omega}\rightarrow {}^*\mathbb{R}$, with the notation $\tilde{\lambda}_f$ for $\tilde{\lambda}(f)$.
\end{remark}
\section{The induced Loeb probability space}\label{section_loebspace}

\begin{definition}
	We define $\tilde{\mathcal{A}}:=\{V\subset \tilde{\Omega}\;|\; V $ is internal $\}=({}^*\mathcal{P})(\tilde{\Omega})$. As $\tilde{\mathcal{A}}$ is an internal $\sigma$-algebra on $\tilde{\Omega}$, we define $\tilde{\mu}:\tilde{\mathcal{A}}\rightarrow{}^*\mathbb{R}_{\geq0}$ with 
	\begin{align*}
		\tilde{\mu}(V):=\sum_{j=1}^{N}\tilde{c}_j\|\operatorname{proj}_{{}^*\operatorname{span}(V)}\tilde{e}_j\|^2=\sum_{j=1}^{N}\tilde{c}_j\sum_{f\in V}|(\tilde{e}_j,f)|^2.
	\end{align*}
\end{definition}
\begin{remark}
	For $f\in \tilde{\Omega}$, we will often write $\tilde{\mu}(f)$ instead of $\tilde{\mu}(\{f\})$.
\end{remark}
The following is a direct consequence of Definitions \ref{definition_sampling} and \ref{definition_scale}, most notably Properties (\ref{sampling_def_Omega}), (\ref{scale_def_proba}), (\ref{scale_def_compat_H}) and (\ref{scale_def_compat_Omega}):
\begin{prop}
	We have that $\tilde{\mu}$ is an internal probability measure on $\tilde{\Omega}$, and so for any $V \in \tilde{\mathcal{A}}$, $\tilde{\mu}(V)=\sum_{f\in V}\tilde{\mu}(f)$. Furthermore, for any $f\in\tilde{\Omega}$, $\tilde{\mu}(f)>0$.
\end{prop}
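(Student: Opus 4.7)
The plan is to verify in turn the three assertions packaged in the statement: that $\tilde{\mu}$ is an internal probability measure, that its value on any internal subset decomposes as the sum of its values on singletons, and that every singleton has strictly positive mass. Each should follow in a few lines by unpacking the definition of $\tilde{\mu}$ and invoking the appropriate enumerated condition.

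For the first claim, internality is immediate since $\tilde{\mu}$ is given by an internal formula on internal sets, and nonnegativity is clear as a hyperfinite sum of nonnegative terms. To compute the total mass, I would use compatibility Property~(\ref{scale_def_compat_H}) to observe that each $\tilde{e}_j\in\tilde{H}$, and Property~(\ref{sampling_def_Omega}) to identify ${}^*\operatorname{span}(\tilde{\Omega})=\tilde{H}$, so that $\operatorname{proj}_{{}^*\operatorname{span}(\tilde{\Omega})}\tilde{e}_j=\tilde{e}_j$. Then by Property~(\ref{scale_def_proba}),
\begin{align*}
\tilde{\mu}(\tilde{\Omega})=\sum_{j=1}^{N}\tilde{c}_j\|\tilde{e}_j\|^2=1.
\end{align*}

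For the singleton decomposition, I would start from the second expression for $\tilde{\mu}(V)$, whose equivalence with the first rests on $V\subseteq\tilde{\Omega}$ being ${}^*$orthonormal so that $\|\operatorname{proj}_{{}^*\operatorname{span}(V)}\tilde{e}_j\|^2=\sum_{f\in V}|(\tilde{e}_j,f)|^2$ by transfer of Parseval in the hyperfinite setting. Swapping the two internal sums then yields
\begin{align*}
\tilde{\mu}(V)=\sum_{j=1}^{N}\tilde{c}_j\sum_{f\in V}|(\tilde{e}_j,f)|^2=\sum_{f\in V}\sum_{j=1}^{N}\tilde{c}_j|(\tilde{e}_j,f)|^2=\sum_{f\in V}\tilde{\mu}(f).
\end{align*}
Together with the computation of the total mass, this also delivers hyperfinite additivity (apply it to a ${}^*$finite disjoint union of internal subsets of $\tilde{\Omega}$), certifying $\tilde{\mu}$ as an internal probability measure.

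Finally, strict positivity on a singleton $\{f\}$ is a direct application of Property~(\ref{scale_def_compat_Omega}): choose $j\in[N]$ with $\tilde{c}_j>0$ and $(\tilde{e}_j,f)\neq0$, contributing a strictly positive term to the otherwise nonnegative sum $\tilde{\mu}(f)=\sum_{j=1}^N\tilde{c}_j|(\tilde{e}_j,f)|^2$. I do not foresee any real obstacle here; the only care required is in justifying via transfer the swap of hyperfinite sums and the orthogonal projection formula, both being internal renditions of elementary finite-dimensional identities.
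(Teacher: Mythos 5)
Your argument is correct and invokes exactly the same four properties---(\ref{sampling_def_Omega}), (\ref{scale_def_proba}), (\ref{scale_def_compat_H}), and (\ref{scale_def_compat_Omega})---that the paper cites when declaring this proposition a direct consequence of the definitions. The paper leaves the verification to the reader; you have simply written it out, and nothing essential differs.
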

\begin{remark}
	Here, the purpose of $\tilde{\mu}$ is to weight each subset in terms of how much energy that subset has in the standard world. It has to be done internally though, so the scale is there to create such an internal bias towards the standard elements.
\end{remark}

\begin{definition}
	We define the triplet $(\Omega_L,\mathcal{A}_L,\mu_L)$ as the Loeb measure space induced from $(\tilde{\Omega},\tilde{\mathcal{A}},\tilde{\mu})$. Specifically:
	\begin{itemize}
		\item $\Omega_L=\tilde{\Omega}$.
		\item $\mathcal{A}_L$ is an external $\sigma$-algebra on $\Omega_L$, with $\tilde{\mathcal{A}}\subset \mathcal{A}_L$.
		\item $\mu_L:\mathcal{A}_L\rightarrow\mathbb{R}_{\geq 0}$ is a $\sigma$-additive complete probability measure.
		\item for any $V\in \tilde{\mathcal{A}}$, $\mu_L(V)=\operatorname{st}(\tilde{\mu}(V))$.
	\end{itemize}
	Let $H_L:=L_2(\Omega_L,\mathcal{A}_L,\mu_L)$ be the standard Hilbert space on $\mathbb{K}$.
\end{definition}
\begin{definition}
	Let $\tilde{U}:{}^*H\rightarrow ({}^*L_2)(\tilde{\Omega},\tilde{\mu})$ be defined by  $(\tilde{U}(x))(f)=\frac{(x,f)}{\sqrt{\tilde{\mu}(f)}}$.
\end{definition}
\begin{remark}
	$({}^*L_2)(\tilde{\Omega},\tilde{\mu})$ is meant here as the internal space of internal functions $u:\tilde{\Omega}\rightarrow{}^*\mathbb{K}$ with internal inner product 
	\begin{align*}
		(u,v)={}^*\int_{\tilde{\Omega}}u\bar{v} d\tilde{\mu}=\sum_{f\in \tilde{\Omega}}u(f)\bar{v}(f)\tilde{\mu}(f).
	\end{align*}
\end{remark}
\begin{prop}
	$\tilde{U}$ is an internal, linear operator. Furthermore, $\ker(\tilde{U})=(\tilde{H})^\perp$, while $\tilde{U}|_{\tilde{H}}$ is a ${}^*$isometry. Finally, if $x$ is nearstandard, then $|\tilde{U}(x)|^2$ is $\mu_L$-almost always limited, S-integrable and $\int_{\Omega_L} |\operatorname{st}\circ\tilde{U}(x)|^2 d\mu_L=\|\operatorname{st}(x)\|^2$.
\end{prop}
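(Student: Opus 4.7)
I would organize the proof in four pieces matching the four claims. Internalness and linearity of $\tilde{U}$ are immediate from the defining formula, which uses only internal operations and is linear in $x$. For the kernel: since $\tilde{\mu}(f)>0$ for each $f\in\tilde\Omega$, $\tilde U(x)=0$ if and only if $(x,f)=0$ for every $f\in\tilde\Omega$, and by Property~(\ref{sampling_def_Omega}) this is equivalent to $x\in\tilde H^\perp$. The ${}^*$isometry on $\tilde H$ is ${}^*$Parseval in disguise: the $\tilde\mu(f)$ factors cancel to give $\|\tilde U(x)\|^2_{{}^*L_2}=\sum_f|(x,f)|^2=\|x\|^2$.

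For the last claim, I would first reduce to the case $x\in\tilde H$ by showing $\operatorname{proj}_{\tilde H}(x)\simeq x$. The sampling construction gives $\operatorname{dom}(A)\subset \tilde H$, and $\operatorname{dom}(A)$ is dense in $H$: for any standard $\epsilon>0$, some $v\in\operatorname{dom}(A)\subset \tilde H$ is within $\epsilon$ of $\operatorname{st}(x)$, hence within $\epsilon$ plus an infinitesimal of $x$, so since projections minimize distance, $\|\operatorname{proj}_{\tilde H}(x)-x\|\simeq 0$. Because $x-\operatorname{proj}_{\tilde H}(x)\in\ker(\tilde U)$, this lets me assume $x\in\tilde H$ with $\|x\|^2\simeq\|\operatorname{st}(x)\|^2$. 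The $\mu_L$-almost everywhere limitedness of $|\tilde U(x)|^2$ then follows from Chebyshev: for each $K\in\mathbb N$, $\tilde\mu(\{|\tilde U(x)|^2\geq K\})\leq\|x\|^2/K$, and the countable intersection over $K$ of these sets (which contains the set where $|\tilde U(x)|^2$ is unlimited) has $\mu_L$-measure zero.

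The main obstacle is the S-integrability of $|\tilde U(x)|^2$, and my plan is to exploit the standard-biased scale. The key observation is that the defining sum for $\tilde\mu(f)$ yields $\tilde\mu(f)\geq \tilde c_j|(\tilde e_j,f)|^2$ for every $j$, so for any standard $j$, $|\tilde U(\tilde e_j)(f)|^2\leq 1/\tilde c_j\simeq 1/c_j$ is standard-limited. Hence any finite combination $x_n=\sum_{j=1}^n a_j\tilde e_j$ with standard coefficients has $|\tilde U(x_n)|\leq C_n$ pointwise for a standard constant $C_n$. Given standard $\epsilon>0$, density of the span of $(e_j)_{j\in\mathbb N}$ in $H$ (Property~(\ref{scale_def_dense})) produces such an $x_n$ with $\|x-x_n\|\leq\epsilon$ up to an infinitesimal, so by the ${}^*$isometry, $\|\tilde U(x)-\tilde U(x_n)\|^2_{{}^*L_2}\leq\epsilon^2$ up to an infinitesimal. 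For infinite $M$, letting $E_M=\{|\tilde U(x)|^2\geq M\}$ (with $\tilde\mu(E_M)\leq\|x\|^2/M\simeq 0$), the elementary inequality $|a|^2\leq 2|b|^2+2|a-b|^2$ yields
\begin{align*}
\int_{E_M}|\tilde U(x)|^2\,d\tilde\mu\leq 2C_n^2\,\tilde\mu(E_M)+2\epsilon^2\simeq 2\epsilon^2,
\end{align*}
and since standard $\epsilon$ is arbitrary, this integral is infinitesimal, establishing S-integrability.

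With S-integrability in hand, the standard Loeb-theory identity $\operatorname{st}(\int g\,d\tilde\mu)=\int\operatorname{st}(g)\,d\mu_L$ applied to $g=|\tilde U(x)|^2$ concludes the proof: the left side equals $\operatorname{st}(\|\operatorname{proj}_{\tilde H}x\|^2)=\|\operatorname{st}(x)\|^2$, yielding the claimed integral formula.
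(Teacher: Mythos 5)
Your argument follows the paper's in all essentials: a Chebyshev-type bound for $\mu_L$-a.e.\ limitedness, the scale bound $|\tilde U(\tilde e_j)|^2\leq 1/\tilde c_j$ combined with approximation of $x$ by finite combinations $\sum a_j\tilde e_j$ for S-integrability, and then the standard Loeb S-integrability identity to conclude. The organization (reducing first to $x\in\tilde H$, then treating the tail via the $L_2$ bound and the bulk via $\tilde\mu(E_M)\simeq 0$) is a mild repackaging of the same estimates the paper performs with $B_K$, and the kernel/isometry computations agree.

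One claim needs repair: you assert $\operatorname{dom}(A)\subset\tilde H$, which holds for the particular sampling built in the existence proof but is not part of Definition~\ref{definition_sampling}. Property~(\ref{sampling_def_approx}) only guarantees, for each $x\in\operatorname{dom}(A)$, \emph{some} $\tilde x\in\tilde H$ with $\tilde x\simeq x$; it does not place $x$ itself in $\tilde H$. The consequence you want, $\operatorname{proj}_{\tilde H}(x)\simeq x$ for nearstandard $x$, is still correct: either invoke Property~(\ref{sampling_def_approx}) to produce $\tilde x\in\tilde H$ with $\tilde x\simeq\operatorname{st}(x)$ (this is exactly how the paper closes its proof), or observe that Property~(\ref{scale_def_compat_H}) puts the finite combinations $\sum a_j\tilde e_j$ in $\tilde H$ and these approximate $\operatorname{st}(x)$ by Property~(\ref{scale_def_dense}). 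With that one-line fix your proof is complete.
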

\begin{proof}
	$\tilde{U}$ is internal, as its domain and co-domain are internal, and as it is defined using only internal objects. It is clearly linear on ${}^*H$. Furthermore, for any $x$ in $\tilde{H}$, 
	\begin{align*}
		\|\tilde{U}(x)\|^2&=\sum_{f\in \tilde{\Omega}}|(\tilde{U}(x))(f)|^2\tilde{\mu}(f)\\
		&=\sum_{f\in \tilde{\Omega}}\left |\frac{(x,f)}{\sqrt{\tilde{\mu}(f)}}\right |^2\tilde{\mu}(f)\\
		&=\sum_{f\in \tilde{\Omega}}|(x,f)|^2=\|\operatorname{proj}_{\tilde{H}} x\|^2.
	\end{align*}
	This implies that $\ker(\tilde{U})=\ker(\operatorname{proj}_{\tilde{H}})=(\tilde{H})^\perp$. Also, $\tilde{U}|_{\tilde{H}}$ preserves norm, and so as a ${}^*$linear funcion it is a ${}^*$isometry.
	
	Let $x\in{}^*H$ be nearstandard. We first show that for almost every $f$ (w.r.t $\mu_L$), $(\tilde{U}(x))(f)$ is a limited number.
	
	For $r\in{}^*\mathbb{R}_{\geq0}$, we consider $B_r=\{f\in \tilde{\Omega} \;|\; r<|(\tilde{U}(x))(f)|^2\}$ and $B_\infty=\{f\in \tilde{\Omega} \;|\; (\tilde{U}(x))(f)$ is infinite$\}$. $B_r$ is internal, and we note that $B_\infty=\bigcap_{n\in\mathbb{N}}B_n$. So $B_\infty$ is Loeb-measurable and $\mu_L(B_\infty)=\lim_{n\rightarrow\infty}\operatorname{st}(\tilde{\mu}(B_n))$. Furthermore,
	\begin{align*}
		&\tilde{\mu}(B_n)=\sum_{f\in B_n}\tilde{\mu}(f)\leq\sum_{f\in B_n}\frac{|(x,f)|^2}{n}\leq\frac{\|x\|^2}{n}\\
		\implies& \operatorname{st}(\tilde{\mu}(B_n))\leq\frac{\|\operatorname{st}(x)\|^2}{n} \implies \mu_L(B_\infty)=0.
	\end{align*}
	Thus, $\operatorname{st}((\tilde{U}(x))(f))$  is $\mathbb{K}$-valued for almost every $f$. To conclude the proof, we will show that for any infinite $K\in {}^*\mathbb{R}_{>0}$, ${}^*\int_{B_K}|\tilde{U}(x)|^2d\tilde{\mu}$ is an infinitesimal. We have:
	\begin{align*}
		{}^*\int_{B_K}|\tilde{U}(x)|^2d\tilde{\mu}&=\sum_{f\in B_K}|(\tilde{U}(x))(f)|^2\tilde{\mu}(f)= \sum_{f\in B_K}|(x,f)|^2=\|\operatorname{proj}_{\operatorname{span}(B_K)}x\|^2.
	\end{align*}
	We want to show that $\|\operatorname{proj}_{\operatorname{span}(B_K)}x\|$ is infinitesimal. Let $\epsilon\in\mathbb{R}_{>0}$. Since $x$ is nearstandard and $(e_j)_{j\in\mathbb{N}}$ spans a dense subset of $H$, there exists $n\in\mathbb{N}$  and standard $(a_1,\dots,a_n)$ in $\mathbb{K}$ s.t. $\|x-\sum_{j=1}^n a_j\tilde{e}_j\|\leq\frac{\epsilon}{2}$, and so:
	\begin{align*}
		\|\operatorname{proj}_{\operatorname{span}(B_K)}x\|\leq\|\operatorname{proj}_{\operatorname{span}(B_K)}(x-\sum_{j=1}^n a_j\tilde{e}_j)\|+\|\operatorname{proj}_{\operatorname{span}(B_K)}\sum_{j=1}^n a_j\tilde{e}_j\|.
	\end{align*}
	
	We have, by Property~(\ref{scale_def_NS}) of Definition~\ref{definition_scale}, that $c_j>0$ for $j\in[n]$. Therefore:
	
	\begin{align*}
		\|\operatorname{proj}_{\operatorname{span}(B_K)}x\|&\leq\|x-\sum_{j=1}^n a_j\tilde{e}_j\|+\sum_{j=1}^n|a_j|\|\operatorname{proj}_{\operatorname{span}(B_K)}\tilde{e}_j\|\\
		&\leq \frac{\epsilon}{2}+\sum_{j=1}^n|a_j|\sqrt{\frac{1}{\tilde{c}_j} \tilde{c}_j\|\operatorname{proj}_{\operatorname{span}(B_K)}\tilde{e}_j\|^2}\\
		&\leq\frac{\epsilon}{2}+\sum_{j=1}^n\frac{|a_j|}{\sqrt{\tilde{c}_j}}\sqrt{\tilde{\mu}(B_K)}
	\end{align*}
	and thus
	\begin{align*}
		\operatorname{st}(\|\operatorname{proj}_{\operatorname{span}(B_K)}x\|)&\leq\frac{\epsilon}{2}+\sum_{j=1}^n \frac{|a_j|}{\sqrt{c_j}}\sqrt{\mu_L(B_K)}\\
		&\leq\frac{\epsilon}{2}+\sum_{j=1}^n \frac{|a_j|}{\sqrt{c_j}}\sqrt{\mu_L(B_{\infty})}=\frac{\epsilon}{2}<\epsilon.
	\end{align*}
	Since it holds for any standard $\epsilon>0$, we get $\|\operatorname{proj}_{\operatorname{span}(B_K)}x\|$ is infinitesimal, and so is ${}^*\int_{B_K}|\tilde{U}(x)|^2d\tilde{\mu}$. Since it holds for any $K$ infinite, we get $|\tilde{U}(x)|^2$ is S-integrable, and so $\int_{\Omega_L} |\operatorname{st}\circ\tilde{U}(x)|^2 d\mu_L=\operatorname{st}({}^*\int_{\tilde{\Omega}}|\tilde{U}(x)|^2d\tilde{\mu})=\operatorname{st}(\|\operatorname{proj}_{\tilde{H}}x\|^2)$. All we have left to show is $\operatorname{proj}_{\tilde{H}}x\simeq\operatorname{st}(x)$.
	
	By hypothesis, $x\simeq \operatorname{st}(x)$. Since $\operatorname{proj}_{\tilde{H}}$ is non-expanding, $\operatorname{proj}_{\tilde{H}}x\simeq \operatorname{proj}_{\tilde{H}}\operatorname{st}(x)$. By Property~(\ref{sampling_def_approx}) of Definition~\ref{definition_sampling}, we know there exists $\tilde{x}\in\tilde{H}$ such that $\tilde{x}\simeq \operatorname{st}(x)$. Since $\operatorname{proj}_{\tilde{H}}$ minimizes distance, we get $\operatorname{proj}_{\tilde{H}}\operatorname{st}(x)\simeq \operatorname{st}(x)$. Thus, $\operatorname{proj}_{\tilde{H}}x\simeq \operatorname{st}(x)$.
	
	Therefore, $\operatorname{st}(\operatorname{proj}_{\tilde{H}}x)=\operatorname{st}(x)$, from which we conclude $$\int_{\tilde{\Omega}} |\operatorname{st}\circ\tilde{U}(x)|^2 d\mu_L=\operatorname{st}(\|\operatorname{proj}_{\tilde{H}}x\|^2)=\|\operatorname{st}(x)\|^2.$$
\end{proof}
\begin{definition}
	We define $U_L:H\rightarrow H_L$ with $U_L(x)=\operatorname{st}\circ (\tilde{U}(x))$. Furthermore, we define $m_L:=\operatorname{st}\circ\tilde{\lambda}:\Omega_L\rightarrow\mathbb{R}\cup\{-\infty,\infty\}$ which defines a multiplication operator $T_L$ on $H_L$.
\end{definition}
\begin{theorem}[Partial spectral theorem]\label{theorem_spectral_loebspace}
	We have that $U_L$ is a well-defined linear isometry on $H$. The function $m_L$ is measurable and almost-everywhere real-valued, so that $T_L$ is a densely-defined self-adjoint operator on $H_L$. Furthermore, for any $(x,y)\in\operatorname{st}(G(\tilde{A}))$, $U_L(x)\in\operatorname{dom}(T_L)$ and $U_L(y)=T_L(U_L(x))$.  In particular, we have $U_L\circ A\leq T_L\circ U_L$  (in other words, $T_L\circ U_L$ extends $U_L\circ A$). 
\end{theorem}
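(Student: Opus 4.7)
My plan is to decompose the statement into four claims and handle them in the order: (i) $U_L$ is a well-defined linear isometry, (ii) the key intertwining identity $\tilde{U}(\tilde{A}z)=\tilde{\lambda}\cdot\tilde{U}(z)$ on $\tilde{H}$, (iii) $m_L$ is measurable and $\mu_L$-a.e.\ real-valued, so that $T_L$ is densely defined and self-adjoint, and (iv) the extension property for pairs in $\operatorname{st}(G(\tilde{A}))$, from which $U_L\circ A\leq T_L\circ U_L$ follows immediately by Property~(\ref{sampling_def_approx}).

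Claim~(i) is essentially read off from the previous proposition, which already produces, for nearstandard $x$, an element $\operatorname{st}\circ\tilde{U}(x)\in H_L$ with norm $\|\operatorname{st}(x)\|$; linearity transfers from the ${}^*$linearity of $\tilde{U}$ via the a.e.\ finiteness of $|\tilde{U}(x)|^2$. Claim~(ii) is the engine of everything else: for $z\in\tilde{H}$ and $f\in\tilde{\Omega}$, using that $\tilde{A}$ is ${}^*$symmetric and $f$ is a ${}^*$eigenvector,
\begin{align*}
\tilde{U}(\tilde{A}z)(f)=\frac{(\tilde{A}z,f)}{\sqrt{\tilde{\mu}(f)}}=\frac{(z,\tilde{A}f)}{\sqrt{\tilde{\mu}(f)}}=\tilde{\lambda}_f\,\tilde{U}(z)(f).
\end{align*}

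The step I expect to be the main obstacle is the a.e.\ finiteness of $\tilde{\lambda}_f$ inside (iii). Setting $F_K=\{f\in\tilde{\Omega}\;|\;|\tilde{\lambda}_f|>K\}$, the ``infinite'' set is $\bigcap_{K\in\mathbb{N}}F_K$, and monotonicity reduces the task to showing $\mu_L(F_K)\to0$. Since $\tilde{\Omega}$ is an orthonormal ${}^*$basis of eigenvectors, $\|\tilde{A}\tilde{e}_j\|^2=\sum_f|\tilde{\lambda}_f|^2|(\tilde{e}_j,f)|^2$, which for standard $j$ yields
\begin{align*}
\sum_{f\in F_K}|(\tilde{e}_j,f)|^2\leq K^{-2}\|\tilde{A}\tilde{e}_j\|^2\simeq K^{-2}\|Ae_j\|^2
\end{align*}
by Property~(\ref{scale_def_compat_A}). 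Splitting the $j$-sum defining $\tilde{\mu}(F_K)$ at a standard cutoff $M$, bounding the tail by $\sum_{j>M}\tilde{c}_j\|\tilde{e}_j\|^2$, and taking standard parts while invoking Property~(\ref{scale_def_bias}) in the form $\sum_{j\in\mathbb{N}}c_j\|e_j\|^2=1$, I obtain
\begin{align*}
\mu_L(F_K)\leq K^{-2}\sum_{j=1}^{M}c_j\|Ae_j\|^2+\Bigl(1-\sum_{j=1}^{M}c_j\|e_j\|^2\Bigr).
\end{align*}
Letting first $K\to\infty$ for fixed $M$, then $M\to\infty$, gives $\mu_L(F_K)\to0$. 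Measurability of $m_L$ is then routine, as $m_L^{-1}((-\infty,a])=\bigcap_n\tilde{\lambda}^{-1}([{}^*-\infty,a+1/n[)$ is a countable intersection of internal sets; and a multiplication operator by an $\mathbb{R}$-valued a.e.\ finite function is automatically densely defined (approximate by cutoffs $\mathbf{1}_{\{|m_L|\leq K\}}$) and self-adjoint.

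Finally, for (iv), given $(x,y)\in\operatorname{st}(G(\tilde{A}))$, I pick $\tilde{x}\in\tilde{H}$ with $\tilde{x}\simeq x$ and $\tilde{A}\tilde{x}\simeq y$. Since $\tilde{U}$ is non-expanding on ${}^*H$ and all of $x,\tilde{x},y,\tilde{A}\tilde{x}$ are nearstandard, applying the previous proposition to $x-\tilde{x}$ and $y-\tilde{A}\tilde{x}$ gives $\operatorname{st}\circ\tilde{U}(x)=\operatorname{st}\circ\tilde{U}(\tilde{x})$ and $\operatorname{st}\circ\tilde{U}(y)=\operatorname{st}\circ\tilde{U}(\tilde{A}\tilde{x})$ in $H_L$. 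Applying (ii) to $z=\tilde{x}$ and taking standard parts pointwise at each $f$ where $\tilde{\lambda}_f$ and $\tilde{U}(\tilde{x})(f)$ are both finite — a conull set by the work above — produces $U_L(y)(f)=m_L(f)\,U_L(x)(f)$ for $\mu_L$-a.e.\ $f$. Hence $m_L\,U_L(x)\in H_L$, i.e.\ $U_L(x)\in\operatorname{dom}(T_L)$, and $T_L(U_L(x))=U_L(y)$. The inclusion $U_L\circ A\leq T_L\circ U_L$ is the particular instance $(x,Ax)\in\operatorname{st}(G(\tilde{A}))$ provided by Property~(\ref{sampling_def_approx}).
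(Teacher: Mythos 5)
Your proposal is correct and follows essentially the same decomposition and estimates as the paper's proof: the paper proves your intertwining identity~(ii) inline rather than as a named claim, and the a.e.\ finiteness of $\tilde{\lambda}_f$ is handled by the same split of the $j$-sum at a standard cutoff, using the eigenvalue bound on the head and Property~(\ref{scale_def_bias}) on the tail (the paper parametrizes with $B_m=\{|\tilde{\lambda}_f|^2>m\}$ and fixes $\epsilon$ first, but it is the same bound). One small slip: Property~(\ref{scale_def_compat_A}) gives $\|\tilde{A}\tilde{e}_j\|^2\simeq\|A^*e_j\|^2$, not $\|Ae_j\|^2$ in general, but since the argument only uses that this quantity is limited, nothing breaks.
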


\begin{proof}[Proof of the theorem]
	From the previous proposition, we know that for any $x\in H$, $U_L(x)$ is well-defined almost everywhere on $\Omega_L$ and that $U_L(x)\in H_L=L_2(\Omega_L,\mu_L)$ with $\|U_L(x)\|^2=\|x\|^2$. Since $U_L$ is linear, we get that $U_L$ is an isometry.
	
	From general Loeb theory, we know that $m_L$ is a measurable function. To show that it is almost everywhere real valued, for $n\in\mathbb{N}$, let $B_n=\{f\in \tilde{\Omega}\;|\; n<|\tilde{\lambda}_f|^2\}$ while $B_\infty=\{f\in \tilde{\Omega}\;|\;\tilde{\lambda}_f $ is infinite $\}=m_L^{-1}(\pm\infty)$. As with the previous proof, we have that $B_n$ is internal, while $B_\infty=\cap_{n\in\mathbb{N}}B_n$, so that $\mu_L(B_\infty)=\lim_{n\rightarrow\infty}\operatorname{st}(\tilde{\mu}(B_n))$. Then, let $\epsilon\in\mathbb{R}_{>0}$, and choose $n\in\mathbb{N}$ s.t. $\sum_{j=n+1}^N\tilde{c}_j\|\tilde{e}_j\|^2<\frac{\epsilon}{2}$, which exists by Property~(\ref{scale_def_bias}) of Definition~\ref{definition_scale} and underspill. For any $m\in\mathbb{N}$:
	\begin{align*}
		\tilde{\mu}(B_m)&=\sum_{j=1}^N\tilde{c}_j\sum_{f\in B_m}|(\tilde{e}_j,f)|^2\\
		&=\sum_{j=1}^n\tilde{c}_j\sum_{f\in B_m}|(\tilde{e}_j,f)|^2+\sum_{j=n+1}^N\tilde{c}_j\sum_{f\in B_m}|(\tilde{e}_j,f)|^2\\
		&\leq\sum_{j=1}^n\tilde{c}_j\sum_{f\in B_m}\frac{|\tilde{\lambda}_f|^2}{m}|(\tilde{e}_j,f)|^2+\sum_{j=n+1}^N\tilde{c}_j\|\tilde{e}_j\|^2\\
		&\leq\sum_{j=1}^n\tilde{c}_j\sum_{f\in B_m}\frac{1}{m}|(\tilde{e}_j,\tilde{\lambda}_ff)|^2+\frac{\epsilon}{2}\\
		&=\sum_{j=1}^n\tilde{c}_j\sum_{f\in B_m}\frac{1}{m}|(\tilde{e}_j,\tilde{A}f)|^2+\frac{\epsilon}{2}\\
		&=\sum_{j=1}^n\tilde{c}_j\sum_{f\in B_m}\frac{1}{m}|(\tilde{A}\tilde{e}_j,f)|^2+\frac{\epsilon}{2} \leq\frac{1}{m}\sum_{j=1}^n\tilde{c}_j\|\tilde{A}\tilde{e}_j\|^2+\frac{\epsilon}{2}.
	\end{align*}
	
	Hence, we have
	\begin{align*}
		 \mu_L(B_\infty)&\leq\limsup_{m\rightarrow\infty}\left(\operatorname{st}(\frac{1}{m}\sum_{j=1}^n\tilde{c}_j\|\tilde{A}\tilde{e}_j\|^2+\frac{\epsilon}{2})\right)\\
		&=\limsup_{m\rightarrow\infty}\left(\frac{1}{m}\sum_{j=1}^nc_j\|\operatorname{st}(\tilde{A}\tilde{e}_j)\|^2+\frac{\epsilon}{2} \right)=\frac{\epsilon}{2}<\epsilon.
	\end{align*}
	Since $\epsilon$ is arbitrary, we have $\mu_L(B_\infty)=0$. And so $m_L$ is measurable and $\mu_L$-almost everywhere real-valued. It is widely known that the induced multiplication operator $T_L$ is therefore a densely defined self-adjoint operator on $H_L$. 
	 
	For the next part of the theorem, let $(x_0,y_0)\in\operatorname{st}(G(\tilde{A}))\subset H\times H$, and let $\tilde{x}\in\tilde{H}$ such that $\tilde{x}\simeq x_0$ and $\tilde{A}\tilde{x}\simeq y_0$.
	
	By the previous proposition, $\|\operatorname{st}\circ\tilde{U}(\tilde{x}-x_0)\|_{H_L}=\|\operatorname{st}(\tilde{x}-x_0)\|_H=0$, so for almost every $f$ (w.r.t. $\mu_L$), $\operatorname{st}\left((\tilde{U}(\tilde{x}))(f)\right)=(U_L(x_0))(f)$. Similarly, we have that  $\operatorname{st}\left((\tilde{U}(\tilde{A}\tilde{x}))(f)\right)=(U_L(y_0))(f)$ holds almost everywhere. Let $B\subset \tilde{\Omega}$ be the set of $f$ for which:
	\begin{itemize}
		\item $\operatorname{st}(\tilde{\lambda}_f)=m_L(f)\in\mathbb{R}$.
		\item$\operatorname{st}\left((\tilde{U}(\tilde{x}))(f)\right)=(U_L(x_0))(f)\in\mathbb{K}$.
		\item$\operatorname{st}\left((\tilde{U}(\tilde{A}\tilde{x}))(f)\right)=(U_L(y_0))(f)\in\mathbb{K}$.
	\end{itemize}
	$B$ is then Loeb-measurable, and $\mu_L(\Omega_L\setminus B)=0$. For any $f\in B$:
	\begin{align*}
		m_L(f)(U_L(x_0))(f)&=\operatorname{st}\left(\tilde{\lambda}_f \cdot(\tilde{U}(\tilde{x}))(f)\right)\\
		&=\operatorname{st}\left(\tilde{\lambda}_f\frac{(\tilde{x},f)}{\sqrt{\tilde{\mu}(f)}}\right)=\operatorname{st}\left( \frac{(\tilde{x},\tilde{\lambda}_ff)}{\sqrt{\tilde{\mu}(f)}}\right)\\
		&=\operatorname{st}\left(\frac{(\tilde{x},\tilde{A}f)}{\sqrt{\tilde{\mu}(f)}}\right)=\operatorname{st}\left(\frac{(\tilde{A}\tilde{x},f)}{\sqrt{\tilde{\mu}(f)}}\right)\\
		&=\operatorname{st}\left((\tilde{U}(\tilde{A}\tilde{x}))(f)\right)=(U_L(y_0))(f).
	\end{align*}
	Therefore, $\int_{\Omega_L}|m_LU_L(x_0)|^2d\mu_L=\int_{\Omega_L}|U_L(y_0)|^2d\mu_L=\|y_0\|^2\in\mathbb{R}$ and so $U_L(x_0)\in\operatorname{dom}(T_L)$, with $T_L( U_L(x_0))=m_L\cdot U_L(x_0)=U_L(y_0)$. 
	
	In particular, if $x\in\operatorname{dom}(A)$ is arbitrary, then $(x,Ax)\in\operatorname{st}(G(\tilde{A}))$, implying $x\in\operatorname{dom}(T_L\circ U_L)$ and $(T_L\circ U_L)(x)=U_L(Ax)$. We conclude $U_L\circ A\leq T_L\circ U_L$.
\end{proof}

\begin{remark}
	For Theorem~\ref{theorem_spectral_loebspace}, $U_L$ is expected not to be unitary in most cases. In fact, $H_L$ may not even be separable. Also, a direct consequence of the theorem is that, for $j\in\mathbb{N}$, we have $U_L(\operatorname{st}(\tilde{A}\tilde{e}_j))=T_L(U_L(e_j))$. Since we can show $\operatorname{st}(\tilde{A}\tilde{e}_j)=A^*e_j$, where $A^*$ is the adjoint of $A$, we get $U_L(A^* e_j)=T_L(U_L(e_j))$ for any $j\in\mathbb{N}$. This is not expected to generalize on $\operatorname{dom}(A^*)$. In fact, one can show $U_L\circ A^*\leq T_L\circ U_L$ if and only if $A$ is essentially self-adjoint.
\end{remark}
	\section{The induced pseudometric and the hull space}\label{section_hullspace}
In this section, we will construct the objects of Theorem~\ref{theorem_spectral_symmetric},  completing its proof. In contrast with Theorem~\ref{theorem_spectral_loebspace}, the target space will be separable and the measure space will be a compact metric space.

First, we prove the following.
\begin{prop}
	For any $j\in [N]$, $\tilde{c}_j\max(|\tilde{U}(\tilde{e}_j)|^2)\leq 1$.
\end{prop}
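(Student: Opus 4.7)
The plan is to unwind the definitions of $\tilde{U}$ and $\tilde{\mu}$ pointwise and observe that the claimed inequality is simply the statement that a single non-negative summand is bounded above by the whole sum.

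First, I would fix $j \in [N]$ and an arbitrary $f \in \tilde{\Omega}$. By definition of $\tilde{U}$,
\begin{align*}
\tilde{c}_j\,|\tilde{U}(\tilde{e}_j)(f)|^2 \;=\; \tilde{c}_j\,\frac{|(\tilde{e}_j,f)|^2}{\tilde{\mu}(f)}.
\end{align*}
Here we use that $\tilde{\mu}(f) > 0$ for every $f\in\tilde{\Omega}$, which is guaranteed by the preceding proposition and follows from Property (\ref{scale_def_compat_Omega}).

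Next, I would expand $\tilde{\mu}(f)$ using its definition:
\begin{align*}
\tilde{\mu}(f) \;=\; \sum_{k=1}^{N} \tilde{c}_k\,|(\tilde{e}_k,f)|^2.
\end{align*}
Every summand is non-negative (since $\tilde{c}_k\in{}^*\mathbb{R}_{\geq 0}$), so the sum is at least the single $k=j$ term:
\begin{align*}
\tilde{\mu}(f) \;\geq\; \tilde{c}_j\,|(\tilde{e}_j,f)|^2.
\end{align*}
Dividing by $\tilde{\mu}(f) > 0$ gives $\tilde{c}_j\,|\tilde{U}(\tilde{e}_j)(f)|^2 \leq 1$. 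Since this holds for every $f\in\tilde{\Omega}$ and the inequality is internal, taking the internal maximum over the hyperfinite set $\tilde{\Omega}$ yields $\tilde{c}_j\max(|\tilde{U}(\tilde{e}_j)|^2)\leq 1$.

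There is essentially no obstacle here; the statement is a direct consequence of unfolding two definitions. The only subtlety worth flagging is making sure we never divide by zero, which is why I would open by citing the positivity $\tilde{\mu}(f)>0$ from the previous proposition before manipulating the ratio.
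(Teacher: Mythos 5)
Your proof is correct and is essentially the paper's own argument spelled out in slightly more detail: the paper's key inequality $\tilde{c}_j|(\tilde{e}_j,f)|^2 \leq \tilde{\mu}(f)$ is precisely the ``single nonnegative summand bounded by the whole sum'' observation you make explicit, and your care about $\tilde{\mu}(f)>0$ is a reasonable addition.
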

\begin{proof}
	Let $f\in \tilde{\Omega}$. We have
	\begin{align*}
		\tilde{c}_j|(\tilde{U}(\tilde{e}_j))(f)|^2&=\frac{\tilde{c}_j|(\tilde{e}_j,f)|^2}{\tilde{\mu}(f)}\leq \frac{\tilde{\mu}(f)}{\tilde{
				\mu}(f)}=1.
	\end{align*}
\end{proof}
\begin{definition}
	We define $\tilde{d}:\tilde{\Omega}\times \tilde{\Omega}\rightarrow{}^*\mathbb{R}_{\geq 0}$ with
	\begin{align*}
		\tilde{d}(f_1,f_2)=\sum_{j=1}^N\tilde{c}_j^{\frac{3}{2}}\|\tilde{e}_j\|^2|(\tilde{U}(\tilde{e}_j))(f_1)-(\tilde{U}(\tilde{e}_j))(f_2)|.
	\end{align*}
\end{definition}
\begin{prop}
	$\tilde{d}$ is an internal pseudometric on $\tilde{\Omega}$. Furthermore $\tilde{d}\leq 2$.
\end{prop}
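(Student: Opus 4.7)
The claim has two parts: that $\tilde{d}$ is an internal pseudometric and that it is bounded by $2$. Internality is immediate since $\tilde{d}$ is defined by a hyperfinite sum involving only internal objects ($\tilde{c}_j$, $\tilde{e}_j$, $\tilde{U}$, and the ${}^*$absolute value), so by the transfer principle it is an internal function $\tilde{\Omega}\times\tilde{\Omega}\rightarrow{}^*\mathbb{R}_{\geq 0}$.

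For the pseudometric axioms, I would simply observe that each term of the sum is the absolute value of a difference, weighted by a non-negative coefficient. Hence non-negativity, vanishing on the diagonal, and symmetry are immediate term by term. The triangle inequality follows from transferring the ordinary triangle inequality for $|\cdot|$ on ${}^*\mathbb{K}$ applied to each term and then summing. No additional structure is needed here.

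The bound $\tilde{d}\leq 2$ is the only non-trivial piece, and it is where the previous proposition comes into play. The plan is to apply the triangle inequality to each summand to split it as $|(\tilde{U}(\tilde{e}_j))(f_1)|+|(\tilde{U}(\tilde{e}_j))(f_2)|$, then use the previous proposition, which gives $\tilde{c}_j|(\tilde{U}(\tilde{e}_j))(f)|^2\leq 1$, i.e.\ $|(\tilde{U}(\tilde{e}_j))(f)|\leq \tilde{c}_j^{-1/2}$. Multiplying by $\tilde{c}_j^{3/2}\|\tilde{e}_j\|^2$ yields a pointwise upper bound of $\tilde{c}_j\|\tilde{e}_j\|^2$ on each half of the split, so
\begin{align*}
\tilde{d}(f_1,f_2)\leq 2\sum_{j=1}^N \tilde{c}_j\|\tilde{e}_j\|^2 = 2,
\end{align*}
using Property~(\ref{scale_def_proba}) of Definition~\ref{definition_scale} at the end. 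The main (and only) subtle step is the bookkeeping of the exponent $\tfrac{3}{2}$: it is chosen precisely so that after absorbing one $\tilde{c}_j^{-1/2}$ from the proposition, the remaining factor is $\tilde{c}_j\|\tilde{e}_j\|^2$, i.e.\ the terms of the internal probability normalization.
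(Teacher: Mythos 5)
Your proof is correct and follows essentially the same route as the paper: internality from the internal definition principle, pseudometric axioms term by term, and the bound by pulling a factor $\tilde{c}_j^{-1/2}$ out of the previous proposition to leave $2\tilde{c}_j\|\tilde{e}_j\|^2$, summing via Property~(\ref{scale_def_proba}). One tiny nitpick: internality comes from the \emph{internal definition principle}, not the transfer principle, but this does not affect the argument.
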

\begin{remark}
	As a clarification, a pseudometric is similar to a distance, with the exception that distance between points can be $0$. One can show that if the $(\tilde{e}_j)_{j\in[N]}$ forms a ${}^*$basis of $\tilde{H}$ and all $\tilde{c}_j$ are positive, then $\tilde{d}$ is actually a metric. However, such a condition is not necessary nor assumed for what comes next.
\end{remark}
\begin{proof}
	$\tilde{d}$ has an internal domain and co-domain. Moreover, its definition only uses internal objects. Thus, $\tilde{d}$ is internal. Furthermore, symmetry, non-negativity and triangle inequality directly hold.  We show the upper bound. For any $f_1$ and $f_2$ in $\tilde{\Omega}$,
	\begin{align*}
		\tilde{d}(f_1,f_2)\leq\sum_{j=1}^N \tilde{c}_j\|\tilde{e}_j\|^22\sqrt{\tilde{c}_j}\max(|\tilde{U}(\tilde{e}_j)|)\leq 2\sum_{j=1}^N \tilde{c}_j\|\tilde{e}_j\|^2=2.
	\end{align*} 
\end{proof}
Next, we construct a nonstandard hull based on $\tilde{d}$ to obtain a real metric space, which will be the main object we work on. The methods used are quite similar as what is found in the literature (see \cite{arkeryd2012nonstandard}). As the object here is an internal pseudometric instead of a standard metric, we prove that everything works out for the sake of completion.
\begin{definition}
	For $f_1$, $f_2$ in $\tilde{\Omega}$, $f_1\sim f_2$ if $\tilde{d}(f_1,f_2)$ is infinitesimal (so if $f_1$ and $f_2$ are either indistinguishable or infinitely close).
\end{definition}
We then have the following.
\begin{prop}\label{proposition_sim_iff_Uj}
	$\sim$ is an equivalence relation on $\tilde{\Omega}$. Furthermore, $f_1\sim f_2$ if and only if for all $j\in \mathbb{N}$, $\operatorname{st}((\tilde{U}(\tilde{e}_j))(f_1))=\operatorname{st}((\tilde{U}(\tilde{e}_j))(f_2))$.
\end{prop}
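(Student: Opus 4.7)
The equivalence-relation part is routine. Reflexivity and symmetry are immediate from $\tilde{d}$ being a pseudometric, and transitivity follows from the triangle inequality together with the fact that a sum of two infinitesimals is infinitesimal. The real content is the biconditional.

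The plan for both directions is to split the defining sum of $\tilde{d}(f_1,f_2)$ into a finite head (indices $j \leq n$ with $n$ standard) and a tail (indices $j > n$), and to control the tail uniformly using the standard-bias property. The crucial ingredient from the previous proposition is that $|(\tilde{U}(\tilde{e}_j))(f)| \leq 1/\sqrt{\tilde{c}_j}$, which for standard $j$ makes these quantities nearstandard (since $\tilde{c}_j \simeq c_j > 0$) and, for all $j$, bounds each individual difference by $2/\sqrt{\tilde{c}_j}$.

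For the forward direction, assume $\tilde{d}(f_1,f_2) \simeq 0$. Every summand is nonnegative, so each is infinitesimal. For standard $j$, the coefficient $\tilde{c}_j^{3/2}\|\tilde{e}_j\|^2$ has positive standard part, forcing $|(\tilde{U}(\tilde{e}_j))(f_1) - (\tilde{U}(\tilde{e}_j))(f_2)|$ to be infinitesimal; since both values are nearstandard, their standard parts coincide.

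For the converse, fix standard $\epsilon > 0$. Using the equivalent form of Property~(\ref{scale_def_bias}), $\sum_{j\in\mathbb{N}} c_j\|e_j\|^2 = 1 = \sum_{j\in[N]} \tilde{c}_j\|\tilde{e}_j\|^2$, choose a standard $n$ with $\operatorname{st}\bigl(\sum_{j>n}\tilde{c}_j\|\tilde{e}_j\|^2\bigr) < \epsilon/4$. The head $\sum_{j=1}^n \tilde{c}_j^{3/2}\|\tilde{e}_j\|^2|(\tilde{U}(\tilde{e}_j))(f_1) - (\tilde{U}(\tilde{e}_j))(f_2)|$ is a standard-finite sum of infinitesimals (the differences are infinitesimal by hypothesis and near-standardness, the coefficients are limited), hence infinitesimal. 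The tail is bounded by $2\sum_{j>n}\tilde{c}_j\|\tilde{e}_j\|^2$, whose standard part is below $\epsilon/2$. Thus $\operatorname{st}(\tilde{d}(f_1,f_2)) < \epsilon$; since $\epsilon$ was arbitrary, $\tilde{d}(f_1,f_2) \simeq 0$.

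The main obstacle is the backward direction: one must simultaneously exploit the standard-bias Property~(\ref{scale_def_bias}) to truncate and the pointwise bound $\tilde{c}_j|\tilde{U}(\tilde{e}_j)|^2 \leq 1$ to convert the weight $\tilde{c}_j^{3/2}$ into the integrable weight $\tilde{c}_j$. Everything else is bookkeeping around standard versus nonstandard indices.
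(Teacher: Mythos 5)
Your proof is correct and follows essentially the same strategy as the paper: use the pointwise bound $\tilde{c}_j|\tilde{U}(\tilde{e}_j)|^2\leq 1$ to read off the forward direction for standard $j$ and to bound tail summands by $2\tilde{c}_j\|\tilde{e}_j\|^2$, and use the standard-bias property to make the tail small. The only cosmetic difference is in the backward direction, where the paper obtains the finite cutoff by underspill from infinite $K$ while you choose a standard $n$ directly from the convergence of $\sum_j c_j\|e_j\|^2$; both are valid and rely on the same facts.
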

\begin{proof}
	It is direct that $\sim$ is an equivalence relation. Indeed, reflexivity follows from $\tilde{d}(f,f)=0$, symmetry follows from symmetry of $\tilde{d}$ and transitivity follows from triangle inequality. 
	
	Let $f_1,f_2\in\tilde{\Omega}$ such that  $f_1\sim f_2$, and let $j\in\mathbb{N}$. We know
	\begin{align*}
		\operatorname{st}(\tilde{c}_j^{\frac{3}{2}}\|\tilde{e}_j\|^2|(\tilde{U}(\tilde{e}_j))(f_1)-(\tilde{U}(\tilde{e}_j))(f_2)|)=0.
	\end{align*}
	
	 One of the three terms in the product must be infinitesimal. By Property (\ref{scale_def_NS}) of Definition~\ref{definition_scale}, both $\tilde{c}_j$ and $\|\tilde{e}_j\|$ are not. Therefore, $\operatorname{st}((\tilde{U}(\tilde{e}_j))(f_1))=\operatorname{st}((\tilde{U}(\tilde{e}_j))(f_2))$.
	
	On the other hand, suppose that  for all $j\in\mathbb{N}$,  we have that $\operatorname{st}((\tilde{U}(\tilde{e}_j))(f_1))=\operatorname{st}((\tilde{U}(\tilde{e}_j))(f_2))$. Since all involved terms are limited, we have
	\begin{align*}
		\operatorname{st}(\tilde{c}_j^{\frac{3}{2}}\|\tilde{e}_j\|^2|(\tilde{U}(\tilde{e}_j))(f_1)-(\tilde{U}(\tilde{e}_j))(f_2)|)=0
	\end{align*}
	 for any standard $j$. Furthermore, let $\epsilon\in\mathbb{R}_{>0}$. For any infinite $K\in[N]$, we have :
	\begin{align*}
		0\leq \sum_{j=K}^{N}\tilde{c}_j^{\frac{3}{2}}\|\tilde{e}_j\|^2|(\tilde{U}(\tilde{e}_j))(f_1)-(\tilde{U}(\tilde{e}_j))(f_2)|\leq 2\sum_{j=K}^{N}\tilde{c}_j\|\tilde{e}_j\|^2<\frac{\epsilon}{2}.
	\end{align*}
	
	The last part also holds by Property (\ref{scale_def_bias}) of Definition~\ref{definition_scale}. By underspill, there exists $k\in\mathbb{N}$ such that $\sum_{j=k}^{N}\tilde{c}_j^{\frac{3}{2}}\|\tilde{e}_j\|^2|(\tilde{U}(\tilde{e}_j))(f_1)-(\tilde{U}(\tilde{e}_j))(f_2)|<\frac{\epsilon}{2}$. Since the first $k$ terms are infinitesimal, we get $\tilde{d}(f_1,f_2)<\epsilon$. Since $\epsilon$ is arbitrary, we have $f_1\sim f_2$.
\end{proof}
\begin{definition}
	Let $\hat{\Omega}:= \Omega_L/\sim$, and $\hat{\nu}:\Omega_L\rightarrow\hat{\Omega}$ be the natural map. Furthermore, let $\hat{d}:\hat{\Omega}\times\hat{\Omega}\rightarrow\mathbb{R}_{\geq0}$ with $\hat{d}(\hat{\nu}(f_1),\hat{\nu}(f_2))=\operatorname{st}(\tilde{d}(f_1,f_2))$.
\end{definition}

\begin{prop}
	$\hat{d}$ is a well-defined classical distance on $\hat{\Omega}$. Furthermore, $(\hat{\Omega},\hat{d})$ is a compact metric space and therefore, it must be separable and second countable.
\end{prop}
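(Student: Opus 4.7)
The plan is to verify the metric axioms for $\hat d$, then establish compactness by showing total boundedness and completeness; since every compact metric space is separable and second countable, those remaining conclusions are automatic.

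\textbf{Well-definedness and metric axioms.} The triangle inequality for $\tilde d$ gives $|\tilde d(f_1,f_2)-\tilde d(f_1',f_2')|\leq \tilde d(f_1,f_1')+\tilde d(f_2,f_2')$, so if $f_i\sim f_i'$ the right-hand side is infinitesimal and $\operatorname{st}(\tilde d)$ depends only on the $\sim$-classes. Symmetry, non-negativity and the triangle inequality for $\hat d$ then follow by taking standard parts of the corresponding statements for $\tilde d$. Positive-definiteness is the definition of $\sim$.

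\textbf{Compactness.} For total boundedness, fix standard $\epsilon>0$; using Property~(\ref{scale_def_bias}) together with the earlier bound $\tilde c_j|\tilde U(\tilde e_j)|^2\leq 1$, I would choose $K\in\mathbb{N}$ large enough that the contribution of indices $j>K$ to $\tilde d$ has standard part less than $\epsilon/4$. The remaining ``head'' then embeds into the compact box $\prod_{j=1}^K\bar D(0,1/\sqrt{c_j})\subset\mathbb{K}^K$ via $f\mapsto (\operatorname{st}((\tilde U(\tilde e_j))(f)))_{j\leq K}$, a finite-dimensional object that can be covered by finitely many balls of radius $\epsilon/4$ in the weighted metric $\sum_{j\leq K}c_j^{3/2}\|e_j\|^2|\cdot|$; selecting one preimage in $\hat\Omega$ per relevant ball yields a finite $\epsilon$-net. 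For completeness, given a Cauchy sequence $(\hat\nu(f_n))$, I would extract a subsequence with $\hat d(\hat\nu(f_{n_k}),\hat\nu(f_{n_{k+1}}))<2^{-k-2}$ and apply $\aleph_1$-saturation to the countable family of internal conditions $\{y\in\tilde\Omega : \tilde d(y,f_{n_k})<2^{-k}\}$, whose finite intersection property is witnessed by $f_{n_K}$ for large $K$, to produce the limit point.

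\textbf{Main obstacle.} The delicate ingredient is the head--tail decomposition of the hyperfinite sum defining $\tilde d$: the standard part of a hyperfinite sum is not in general the sum of standard parts, so one must separate a standard-length initial segment (where $\operatorname{st}$ distributes through the finite sum) from a hyperfinite tail whose standard part must be bounded uniformly in the pair of points. Property~(\ref{scale_def_bias})---which says that the weights $\tilde c_j\|\tilde e_j\|^2$ concentrate on the standard indices---paired with the pointwise bound $\tilde c_j^{1/2}|\tilde U(\tilde e_j)|\leq 1$ is exactly what makes this work. Everything else reduces either to finite-dimensional elementary arguments or to routine $\aleph_1$-saturation.
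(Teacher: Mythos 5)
Your proposal is correct and follows essentially the same approach as the paper's proof: well-definedness and the metric axioms descend from the internal pseudometric $\tilde d$, total boundedness comes from the same head--tail decomposition of the sum defining $\tilde d$ (the tail controlled by Property~(\ref{scale_def_bias}), the head mapped into a bounded set in $\mathbb{K}^n$ via the bound $\tilde c_j\max|\tilde U(\tilde e_j)|^2 \leq 1$), and completeness uses a $2^{-k}$-Cauchy subsequence plus saturation. The only difference is cosmetic: in the completeness step the paper first extends $(f_k)$ to an internal sequence by saturation and then applies overspill to find the limit point $\tilde f_K$, whereas you apply countable saturation directly to the decreasing family of internal balls $\{y : \tilde d(y,f_{n_k})<2^{-k}\}$ via the finite intersection property---both are equivalent, routine uses of $\aleph_1$-saturation.
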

\begin{proof}
	First, we must show $\hat{d}$ is well-defined. If $\hat{\nu}(f_1)=\hat{\nu}(f_1')$ and $\hat{\nu}(f_2)=\hat{\nu}(f_2')$, we have, since $\tilde{d}$ is an internal pseudometric:
	\begin{align*}
		|\tilde{d}(f_1,f_2)-\tilde{d}(f_1',f_2')|&\leq\tilde{d}(f_1,f_1')+\tilde{d}(f_2,f_2')\\
		\implies 0\leq|\operatorname{st}(\tilde{d}(f_1,f_2))-\operatorname{st}(\tilde{d}(f_1',f_2'))|&\leq \operatorname{st}(\tilde{d}(f_1,f_1')+\tilde{d}(f_2,f_2'))=0\\
		\implies \hat{d}(\hat{\nu}(f_1),\hat{\nu}(f_2))&=\hat{d}(\hat{\nu}(f_1'),\hat{\nu}(f_2')).
	\end{align*}
	
	Furthermore, the non-negativity, symmetry and triangle inequality of $d$ can be proven directly from the same properties of $\tilde{d}$. We also have $\hat{d}(\hat{\nu}(f_1),\hat{\nu}(f_2))=0$ if and only if $ \operatorname{st}(\tilde{d}(f_1,f_2))=0$, if and only if $f_1\sim f_2$, if and only if $\hat{\nu}(f_1)=\hat{\nu}(f_2)$, and so $\hat{d}$ is a metric on $\hat{\Omega}$. 
	
	Now we prove compactness by showing that $(\hat{\Omega},\hat{d})$ is totally bounded and complete.
	
	We first show the totally bounded property.
	
	Let $\epsilon\in\mathbb{R}_{>0}$. We find a finite set $\{f_1,f_2,\dots,f_k\}\subset \Omega_L$ such that for all $f\in \Omega_L$, there exists $l\in[k]$ with  $\hat{d}(\hat{\nu}(f),\hat{\nu}(f_l))<\epsilon$. First, let $n\in\mathbb{N}$ for which $\sum_{j=n+1}^N2\tilde{c}_j\|\tilde{e}_j\|^2\leq\frac{\epsilon}{2}$, so that, for any $f$, $f'$ in $\tilde{\Omega}$:
	\begin{align*}
		\hat{d}(\hat{\nu}(f),\hat{\nu}(f'))\leq\frac{\epsilon}{2}+\sum_{j=1}^nc_j^{\frac{3}{2}}\|e_j\|^2|\operatorname{st}((\tilde{U}(\tilde{e}_j))(f))-\operatorname{st}((\tilde{U}(\tilde{e}_j))(f'))|.
	\end{align*}
	This leads to the definition of the (clearly external) function $X_n:\Omega_L\rightarrow \mathbb{K}^n$ with $X_n(f)_j=c_j^{\frac{3}{2}}\|e_j\|^2\operatorname{st}((\tilde{U}(\tilde{e}_j))(f))$ so that the previous inequality becomes
	\begin{align*}
		\hat{d}(\hat{\nu}(f),\hat{\nu}(f'))\leq\frac{\epsilon}{2}+\|X_n(f)-X_n(f') \|_1.
	\end{align*}
	Since $|X_n(f)_j|\leq 1$ for any $f$ and any $j$, we have $X_n(\Omega_L)$ is a bounded set in $\mathbb{K}^n$, and so is totally bounded with respect to $\|\cdot\|_1$. Thus, there exists a finite subset $\{f_1,f_2,\dots,f_k\}$ of $\Omega_L$ such that for all $f\in \Omega_L$, there exists $l\in[k]$ for which  $\|X_n(f)-X_n(f_l)\|_1<\frac{\epsilon}{2}$, so $\hat{d}(\hat{\nu}(f),\hat{\nu}(f_l))<\epsilon$. Since $\epsilon$ is arbitrary, $\hat{\Omega}$ is totally bounded.
	
	Next, we prove the completeness.
	
	Let $(\hat{\nu}(f_k))_{k\in\mathbb{N}}$ be a Cauchy sequence in $\hat{\Omega}$. We use the classical technique of assuming $\hat{d}(\hat{\nu}(f_k),\hat{\nu}(f_{k+1}))<\frac{1}{2^k}$, without loss of generality. Indeed any Cauchy sequence has a subsequence with that property, and the convergence of a subsequence implies the convergence of the sequence.
	
	By $\aleph_1$-saturation, the sequence $(f_k)_{k\in\mathbb{N}}$ can be extended to an internal sequence $(\tilde{f}_k)_{k\in{}^*\mathbb{N}}$. Then, we use overspill on $\{k\in{}^*\mathbb{N}\;|\;\tilde{d}(\tilde{f}_k,\tilde{f}_{k+1})<\frac{1}{2^k}\}$. This set is internal and has $\mathbb{N}$ as a subset, therefore also has $[K]$ as a subset for some infinite natural $K$.
	
	We show $\lim_{k\rightarrow\infty}\hat{\nu}(f_k)=\hat{\nu}(\tilde{f}_K)$. Let $\epsilon\in\mathbb{R}_{>0}$. Let $n\in\mathbb{N}$ s.t. $\frac{1}{2^n}<\epsilon$. For any $k\in\mathbb{N}$ with $k>n$,
	\begin{align*}
		\hat{d}(\hat{\nu}(f_k),\hat{\nu}(\tilde{f}_K))&=\operatorname{st}(\tilde{d}(f_k,\tilde{f}_K))\leq\operatorname{st}\left(\sum_{l=k}^{K-1}\tilde{d}(\tilde{f}_l,\tilde{f}_{l+1})\right)\\
		&\leq\operatorname{st}\left( \sum_{l=k}^{K-1}\frac{1}{2^l} \right)=\frac{1}{2^{k-1}}\leq\frac{1}{2^n}<\epsilon
	\end{align*}
	as planned. Therefore, $\hat{\Omega}$ is complete. All of that shows $(\hat{\Omega},\hat{d})$ is a compact metric space, all of which are separable and second countable.
\end{proof}

\begin{remark}
	From now on, we use the notation $\hat{B}_r(x)=\{y\in\hat{\Omega}\;|\;\hat{d}(y,x)<r\}$, for balls in $\hat{\Omega}$ (here, $r\in\mathbb{R}_{>0}$, $x\in\hat{\Omega}$). Similarly, we use $\tilde{B}_r(f)=\{f'\in \tilde{\Omega}\;|\;\tilde{d}(f',f)<r\}$ for internal balls in $\tilde{\Omega}$ (here, $r\in{}^*\mathbb{R}_{>0}$, $x\in \tilde{\Omega}$).
\end{remark}
\begin{definition}
	We define $\hat{\mathcal{A}}=\operatorname{Borel}(\hat{\Omega},\hat{d})$, the $\sigma$-algebra of Borel sets on $\hat{\Omega}$.
\end{definition}
\begin{prop}
	$\hat{\nu}:(\Omega_L,\mathcal{A}_L)\rightarrow (\hat{\Omega}, \hat{\mathcal{A}})$ is a measurable function.
\end{prop}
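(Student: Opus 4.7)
My plan is to reduce measurability to a generating family for $\hat{\mathcal{A}}$. Since $(\hat{\Omega},\hat{d})$ was just shown to be second countable, the Borel $\sigma$-algebra is generated by the collection of open balls $\hat{B}_r(\hat{\nu}(f_0))$ with $r\in\mathbb{Q}_{>0}$ and $f_0$ ranging over $\tilde{\Omega}$ (or even over a countable subset giving a dense image in $\hat{\Omega}$). Therefore, it suffices to show that $\hat{\nu}^{-1}(\hat{B}_r(\hat{\nu}(f_0)))\in\mathcal{A}_L$ for each standard $r>0$ and each $f_0\in\tilde{\Omega}$.

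The central step is to rewrite this preimage as a countable union of internal sets. By definition,
\begin{align*}
\hat{\nu}^{-1}(\hat{B}_r(\hat{\nu}(f_0)))=\{f\in\tilde{\Omega}\;|\;\operatorname{st}(\tilde{d}(f,f_0))<r\},
\end{align*}
and I claim this equals $\bigcup_{n\in\mathbb{N}}\{f\in\tilde{\Omega}\;|\;\tilde{d}(f,f_0)\leq r-\tfrac{1}{n}\}$. One inclusion is immediate: if $\tilde{d}(f,f_0)\leq r-\tfrac{1}{n}$ for some standard $n$, then $\operatorname{st}(\tilde{d}(f,f_0))\leq r-\tfrac{1}{n}<r$. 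For the reverse, given $f$ with $\operatorname{st}(\tilde{d}(f,f_0))<r$, I pick a standard $n$ with $\operatorname{st}(\tilde{d}(f,f_0))<r-\tfrac{2}{n}$, which forces $\tilde{d}(f,f_0)<r-\tfrac{2}{n}\leq r-\tfrac{1}{n}$.

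Each of the sets $\{f\in\tilde{\Omega}\;|\;\tilde{d}(f,f_0)\leq r-\tfrac{1}{n}\}$ is internal, because $\tilde{d}$ is an internal function and $r-\tfrac{1}{n}\in{}^*\mathbb{R}$, hence lies in $\tilde{\mathcal{A}}\subset\mathcal{A}_L$. Since $\mathcal{A}_L$ is closed under countable unions, the preimage is in $\mathcal{A}_L$, and measurability of $\hat{\nu}$ follows.

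There is no real obstacle here; the only subtle point is the passage from the strict inequality defining an open ball in $\hat{\Omega}$ (via $\operatorname{st}$) to a countable union of internal closed-type conditions, which is exactly the standard trick to express an external "standard-part" set as a Loeb-measurable set.
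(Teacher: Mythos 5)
Your proof is correct and takes essentially the same approach as the paper: reduce to preimages of open balls, then express each preimage as a countable union of internal sets indexed by $n\in\mathbb{N}$. The only cosmetic difference is that you use the internal sets $\{f \mid \tilde{d}(f,f_0)\leq r-\tfrac{1}{n}\}$ while the paper uses the internal open balls $\tilde{B}_{\epsilon-\frac{1}{n}}(f)$; both are internal and the union argument is identical.
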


\begin{proof}
	It is sufficient to show that for any standard $\epsilon>0$, and for any $f$ in $\tilde{\Omega}$, $\hat{\nu}^{-1}(\hat{B}_\epsilon(\hat{\nu}(f)))$ is Loeb-measurable. But then,
	\begin{align*}
		\hat{\nu}^{-1}(\hat{B}_\epsilon(\hat{\nu}(f)))=\{f'\in \tilde{\Omega}\;|\;\operatorname{st}(\tilde{d}(f',f))<\epsilon\}=\bigcup_{\substack{n\in\mathbb{N}\\n>\frac{1}{\epsilon}}}\tilde{B}_{\epsilon-\frac{1}{n}}(f).
	\end{align*}
	Therefore, $\hat{\nu}^{-1}(B_\epsilon(\hat{\nu}(f)))$ is a countable union of internal subsets of $\tilde{\Omega}$, and so it is Loeb-measurable.
\end{proof}	
\begin{definition}
	Let $\hat{\mu}:\hat{\mathcal{A}}\rightarrow\mathbb{R}_{\geq 0}$ be the pushforward probability measure of $\mu_L$ through $\hat{\nu}$, so that $\hat{\mu}(W)=\mu_L(\hat{\nu}^{-1}(W))$. Furthermore, let $\hat{H}$ be the $\mathbb{K}$-Hilbert space $\hat{H}:=L_2(\hat{\Omega},\hat{\mathcal{A}},\hat{\mu})$. 
\end{definition}
\begin{prop}
	$\hat{H}$ is separable.
\end{prop}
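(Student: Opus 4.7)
The plan is to derive the separability of $\hat{H}$ from the topological properties of $\hat{\Omega}$ already established, namely that $(\hat{\Omega},\hat{d})$ is a compact metric space and hence second countable. From this, I will produce a countable dense subset of $L_2(\hat{\Omega},\hat{\mathcal{A}},\hat{\mu})$ by going through the continuous functions on $\hat{\Omega}$.

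First, I would show that $C(\hat{\Omega})$, endowed with the sup norm, is separable. Fix a countable dense subset $D=\{x_n\}_{n\in\mathbb{N}}$ of $\hat{\Omega}$ (which exists by the previous proposition) and consider the distance functions $\hat{d}(\cdot,x_n)$ together with the constants. The unital algebra they generate with rational (or rational-complex) coefficients is countable, separates points of $\hat{\Omega}$, and by the Stone--Weierstrass theorem is sup-norm dense in $C(\hat{\Omega})$. So $C(\hat{\Omega})$ is separable.

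Next, I would argue that $C(\hat{\Omega})$ sits densely in $\hat{H}=L_2(\hat{\Omega},\hat{\mathcal{A}},\hat{\mu})$. Because $\hat{\Omega}$ is a compact metric space and $\hat{\mu}$ is a finite Borel measure, $\hat{\mu}$ is automatically regular, so indicator functions of Borel sets are $L_2$-approximable by indicators of closed sets, and these in turn by continuous functions via Urysohn's lemma. Hence simple functions, and thus all of $L_2$, are $L_2$-approximable by elements of $C(\hat{\Omega})$. Combining with the separability of $C(\hat{\Omega})$ under sup norm (which dominates the $L_2$ norm since $\hat{\mu}$ is a probability measure), we get a countable subset of $C(\hat{\Omega})$ that is $L_2$-dense in $\hat{H}$.

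There is no real obstacle here; the whole argument is classical measure-theoretic bookkeeping resting on the nontrivial fact, already proven above, that $(\hat{\Omega},\hat{d})$ is compact metric. The only point requiring mild care is invoking regularity of finite Borel measures on metric spaces, which could alternatively be bypassed by noting that a second-countable space has a countably generated Borel $\sigma$-algebra, whence $L_2$ of any probability measure on it is separable by a standard result.
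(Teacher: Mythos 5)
Your argument is correct, but your primary route differs from the paper's, which is shorter and more abstract. The paper simply observes that second countability of $\hat{\Omega}$ makes $\hat{\mathcal{A}}$ countably generated, hence $(\hat{\Omega},\hat{\mathcal{A}},\hat{\mu})$ is a separable probability space, hence $\hat{H}=L_2$ is separable by a standard fact — precisely the ``bypass'' you mention in your final sentence. Your main path goes through $C(\hat{\Omega})$: Stone--Weierstrass gives separability of $C(\hat{\Omega})$ in sup norm, then regularity of the finite Borel measure on a metric space plus Urysohn gives $L_2$-density of $C(\hat{\Omega})$ in $\hat{H}$. This is more self-contained (it constructs the dense set explicitly rather than citing the countably-generated-$\sigma$-algebra lemma) at the cost of being longer and importing two additional classical ingredients (Stone--Weierstrass and inner regularity). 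Both are sound; the paper's version simply front-loads the measure-theoretic folklore into a single citation, which matches the lighter role this proposition plays in the overall development.
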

\begin{proof}
	$\hat{\Omega}$ is second countable, and so $\hat{\mathcal{A}}$ is countably generated. Therefore, $(\hat{\Omega},\hat{\mathcal{A}},\hat{\mu})$ is a separable probability space, which implies that $\hat{H}$ itself is separable.
\end{proof}
\begin{remark}
	Now, what's left is to prove that the relevant objects $U_L$ and $m_L$ can be push-forwarded through $\hat{\nu}$. In contrast with the measure and the metric, such functions don't push smoothly, but they do pull back well. First, we have to find objects that can easily be push-forwarded, then play some kind of tug-of-war between spaces to push what we need.
\end{remark}
\begin{definition}
	Let $\hat{\mathcal{I}}:\hat{H}\rightarrow H_L$ be the isometry defined by $\hat{\mathcal{I}}(g)=g\circ\hat{\nu}$.
\end{definition}
\begin{remark}
	The fact that $\hat{\mathcal{I}}$ is indeed a well-defined isometry is a well-known consequence of push-forward measures.
\end{remark}
\begin{prop}\label{proposition_existence_Uj}
	For any $j\in\mathbb{N}$, there exists a $\hat{d}$-continuous $\hat{U}_j:\hat{\Omega}\rightarrow\mathbb{K}$ such that $\hat{U}_j\circ\hat{\nu}=\operatorname{st}\circ(\tilde{U}(\tilde{e}_j))$ holds on $\Omega_L$. Furthermore, $U_L(H)\subset\hat{\mathcal{I}}(\hat{H})$.
\end{prop}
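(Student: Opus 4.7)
The plan is to define $\hat{U}_j$ directly by $\hat{U}_j(\hat{\nu}(f)) := \operatorname{st}((\tilde{U}(\tilde{e}_j))(f))$ and then verify the stated properties. Well-definedness has two parts: the right-hand side lies in $\mathbb{K}$ because the preceding proposition gives $\tilde{c}_j |(\tilde{U}(\tilde{e}_j))(f)|^2 \leq 1$, and for standard $j$ the weight $\tilde{c}_j$ is appreciable (by Property~(\ref{scale_def_NS}) of Definition~\ref{definition_scale}), which forces $(\tilde{U}(\tilde{e}_j))(f)$ to be limited on all of $\tilde{\Omega}$; independence from the choice of representative of an equivalence class is exactly one half of Proposition~\ref{proposition_sim_iff_Uj}. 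The required pointwise identity $\hat{U}_j\circ\hat{\nu}=\operatorname{st}\circ(\tilde{U}(\tilde{e}_j))$ on $\Omega_L$ then holds by construction.

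For continuity, I would extract the $j$-th summand from the formula defining $\tilde{d}$: for any $f_1,f_2\in\tilde{\Omega}$,
$$\tilde{c}_j^{\frac{3}{2}}\|\tilde{e}_j\|^2\,|(\tilde{U}(\tilde{e}_j))(f_1)-(\tilde{U}(\tilde{e}_j))(f_2)| \leq \tilde{d}(f_1,f_2).$$
All quantities are limited for standard $j$, so taking standard parts yields the Lipschitz estimate $|\hat{U}_j(\hat{\nu}(f_1))-\hat{U}_j(\hat{\nu}(f_2))|\leq \hat{d}(\hat{\nu}(f_1),\hat{\nu}(f_2))/(c_j^{\frac{3}{2}}\|e_j\|^2)$, which is strictly stronger than the required $\hat{d}$-continuity.

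For the inclusion $U_L(H)\subset\hat{\mathcal{I}}(\hat{H})$, I would first argue that $U_L(e_j)=\hat{\mathcal{I}}(\hat{U}_j)$ in $H_L$ for every $j\in\mathbb{N}$. Since $\operatorname{st}(\tilde{e}_j-e_j)=0$ and $\tilde{e}_j-e_j$ is nearstandard, the $L_2$-isometry statement of the last proposition of Section~\ref{section_loebspace} applied to $\tilde{e}_j-e_j$ gives $\int_{\Omega_L}|\operatorname{st}\circ\tilde{U}(\tilde{e}_j-e_j)|^2\,d\mu_L=0$, so $\operatorname{st}\circ(\tilde{U}(\tilde{e}_j))=\operatorname{st}\circ(\tilde{U}(e_j))=U_L(e_j)$ $\mu_L$-almost everywhere; thus $\hat{\mathcal{I}}(\hat{U}_j)=\hat{U}_j\circ\hat{\nu}$ and $U_L(e_j)$ coincide in $H_L$. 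To conclude, I invoke that $(e_j)_{j\in\mathbb{N}}$ spans a dense subspace of $H$ (Property~(\ref{scale_def_dense})) and that $U_L$ is an isometry, so $\{U_L(e_j)\}_{j\in\mathbb{N}}$ spans a dense subspace of $U_L(H)$; since $\hat{\mathcal{I}}$ is a Hilbert-space isometry, its image $\hat{\mathcal{I}}(\hat{H})$ is a closed linear subspace of $H_L$, and containing every $U_L(e_j)$ it must contain $U_L(H)$. The only mildly delicate step is this $\mu_L$-a.e.\ bridge between $\tilde{U}(\tilde{e}_j)$ and $\tilde{U}(e_j)$; everything else is a direct unpacking of the definitions together with applications of results already proved.
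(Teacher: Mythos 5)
Your proposal is correct and follows essentially the same route as the paper's own proof: well-definedness via Proposition~\ref{proposition_sim_iff_Uj} (and boundedness from the preceding proposition), a Lipschitz estimate extracted from the $j$-th summand of $\tilde{d}$, identification $\hat{\mathcal{I}}(\hat{U}_j)=U_L(e_j)$ using $\tilde{e}_j\simeq e_j$ and the $L_2$-isometry result, and then the closure argument on the dense span. The one detail the paper makes explicit that you leave implicit is that $\hat{U}_j\in\hat{H}$ (needed for $\hat{\mathcal{I}}(\hat{U}_j)$ to make sense), which follows from $\hat{U}_j$ being bounded and $\hat{\Omega}$ a compact probability space.
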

\begin{proof}
	Let $j\in\mathbb{N}$. 
	
	By Proposition~\ref{proposition_sim_iff_Uj}, we know that $\operatorname{st}\circ\left(\tilde{U}(\tilde{e}_j)\right)(f_1)=\operatorname{st}\circ\left(\tilde{U}(\tilde{e}_j)\right)(f_2)$ holds for any $f_1,f_2\in\tilde{\Omega}$ for which $\hat{\nu}(f_1)=\hat{\nu}(f_2)$. Therefore, there exists a unique function $\hat{U}_j:\hat{\Omega}\rightarrow\mathbb{K}$ such that $\hat{U}_j\circ\hat{\nu}=\operatorname{st}\circ\left(\tilde{U}(\tilde{e}_j)\right)$ holds on $\Omega_L$. Furthermore, for any such $f_1, f_2\in\Omega_L$:
	\begin{align*}
		\left|\left(\hat{U}_j(\hat{\nu}(f_1))-\hat{U}_j(\hat{\nu}(f_2))\right)\right|&=\operatorname{st}\left(\left|(\tilde{U}(\tilde{e}_j))(f_1)-(\tilde{U}(\tilde{e}_j))(f_2)\right|\right)\\
		&\leq\operatorname{st}\left(\frac{1}{\tilde{c}_j^{\frac{3}{2}}\|\tilde{e}_j\|^2}\tilde{d}(f_1,f_2)\right)=\frac{1}{\operatorname{st}(\tilde{c}_j^{\frac{3}{2}}\|\tilde{e}_j\|^2)}\hat{d}(\hat{\nu}(f_1),\hat{\nu}(f_2)).
	\end{align*}
	We remind that by Property (\ref{scale_def_NS}) of Definition~\ref{definition_scale}, the denominator of that last fraction is positive. Thus, $\hat{U}_j$ is $\hat{d}$-Lipschitz, therefore continuous. Since $\hat{\Omega}$ is a compact probability space, $\hat{U}_j$ is also $L_2$ on $\hat{\Omega}$. 
	
	As with the proof of Theorem~\ref{theorem_spectral_loebspace}, we know that $\tilde{e}_j\simeq e_j$, thus  $$\|\operatorname{st}\circ\left(\tilde{U}(\tilde{e}_j-e_j)\right)\|_{H_L}=0.$$ Therefore, as elements of $H_L$, we have $U_L(e_j)=\operatorname{st}\circ(\tilde{U}(\tilde{e}_j))$. We get that $\hat{\mathcal{I}}(\hat{U}_j)=\hat{U}_j\circ\hat{\nu}=\operatorname{st}\circ\left(\tilde{U}(\tilde{e}_j)\right)=U_L(e_j)$ as elements of $H_L$.
	
	From that, we deduce $U_L(\operatorname{span}(\{e_j\;|\;j\in\mathbb{N} \}))\subset\hat{\mathcal{I}}(\hat{H})$. Since both $U_L$ and $\hat{\mathcal{I}}$ are isometries (which map closed sets to closed sets), we can take the closure on both sides and obtain $U_L(H)\subset\hat{\mathcal{I}}(\hat{H})$, concluding the proof.
\end{proof}
\begin{remark}\label{remark_definition_Uj}
	From now on, the objects $(\hat{U}_j)_{j\in\mathbb{N}}$ are fixed, with the properties of Proposition~\ref{proposition_existence_Uj}.
\end{remark}

\begin{definition}
	We define the map $\hat{U}:=\hat{\mathcal{I}}^{-1}\circ U_L:H\rightarrow\hat{H}$.
\end{definition}
\begin{remark}
	Since $\hat{\mathcal{I}}$ and $U_L$ are isometries, $\hat{U}$ is also one. Furthermore, it is the unique function such that $\hat{\mathcal{I}}\circ\hat{U}=U_L$.
	
	Intuitively, what was straightforward to push through $\hat{\nu}$ was $\operatorname{st}\circ(\tilde{U}(\tilde{e}_j))$ to $\hat{U}_j$, which was used to define $\hat{U}(x)$ for any $x\in H$. The method resembles how the Fourier transform can be created on $L_2$ by first considering $L_2\cap L_1$. The connection with that example is in fact quite deeper, as we will see in Section \ref{section_differential}.
	
	Some minutia will be required to push $m_L$ through $\hat{\nu}$.
\end{remark}

\begin{prop}\label{proposition_pushforward_mult}
	There exists a measurable function $\hat{m}:\hat{\Omega}\rightarrow\mathbb{R}$ s.t. $m_L=\hat{m}\circ\hat{\nu}$, in the sense that they agree $\mu_L$-almost everywhere on $\Omega_L$.
\end{prop}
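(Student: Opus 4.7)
The plan is to solve $m_L$ out of the intertwining identity $T_L\circ U_L\supset U_L\circ A$ from Theorem~\ref{theorem_spectral_loebspace}, and then to verify that the small set where this inversion degenerates is $\mu_L$-null.

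For each $j\in\mathbb{N}$, since $e_j\in\operatorname{dom}(A)$ and $U_L(H)\subset\hat{\mathcal{I}}(\hat{H})$, I set $\hat{v}_j:=\hat{U}(Ae_j)\in\hat{H}$, so that $\hat{v}_j\circ\hat{\nu}=U_L(Ae_j)$ in $H_L$. Theorem~\ref{theorem_spectral_loebspace} then delivers
\[
m_L(f)\,\hat{U}_j(\hat{\nu}(f))=\hat{v}_j(\hat{\nu}(f))\quad\text{for }\mu_L\text{-a.e.\ }f\in\Omega_L,
\]
for every $j\in\mathbb{N}$. After fixing Borel representatives, the open set $E_j:=\{\hat{U}_j\neq 0\}\subset\hat{\Omega}$ will carry the Borel function $\hat{m}_j:=\hat{v}_j/\hat{U}_j$. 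On overlaps $E_j\cap E_k$, the two candidates $\hat{m}_j,\hat{m}_k$ must agree $\hat{\mu}$-a.e., since both pull back to $m_L$ on a conull subset of $\hat{\nu}^{-1}(E_j\cap E_k)$ and $\hat{\mu}=\hat{\nu}_*\mu_L$. I will glue by the least-index rule to obtain a Borel function $\hat{m}:E\to\mathbb{R}$ on $E:=\bigcup_{j}E_j$ satisfying $\hat{m}\circ\hat{\nu}=m_L$ $\mu_L$-a.e.\ on $\hat{\nu}^{-1}(E)$, and then extend by zero on $W_0:=\hat{\Omega}\setminus E$.

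The hard part is then to show that $\hat{\mu}(W_0)=\mu_L(Z_L)=0$, where $Z_L:=\hat{\nu}^{-1}(W_0)=\{f\in\tilde{\Omega}:\operatorname{st}(\tilde{U}(\tilde{e}_j)(f))=0\text{ for all }j\in\mathbb{N}\}$; this is where the standard-biased scale earns its keep. For standard $M\in\mathbb{N}$ and $\epsilon>0$, the internal set $B_{M,\epsilon}:=\{f\in\tilde{\Omega}:|\tilde{U}(\tilde{e}_j)(f)|^2\leq\epsilon\text{ for all }j\leq M\}$ contains $Z_L$. Expanding $\tilde{\mu}(B_{M,\epsilon})$ through its definition, using $|(\tilde{e}_j,f)|^2=\tilde{\mu}(f)|\tilde{U}(\tilde{e}_j)(f)|^2$, and splitting the sum at index $M$ should yield
\[
\tilde{\mu}(B_{M,\epsilon})\Bigl(1-\epsilon\sum_{j\leq M}\tilde{c}_j\Bigr)\leq\sum_{j>M}\tilde{c}_j\|\tilde{e}_j\|^2.
\]
By scale property~(\ref{scale_def_bias}), the right-hand side has standard part $1-\sum_{j\leq M}c_j\|e_j\|^2$, which tends to $0$ as $M\to\infty$; for each fixed $M$, $\sum_{j\leq M}\tilde{c}_j$ has finite standard part, so choosing $\epsilon$ small keeps the bracketed factor on the left bounded away from $0$. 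Thus $\mu_L(Z_L)$ is dominated by arbitrarily small positive reals and must vanish, completing the construction.
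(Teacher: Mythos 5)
Your proof is essentially the same as the paper's: the same countable cover by $\{\hat{U}_j\neq 0\}$, the same quotient $\hat{v}_j/\hat{U}_j$ glued with a least-index rule, and the same null-set estimate driven by Property~(\ref{scale_def_bias}) to control the set where all $\hat{U}_j$ vanish (the paper's internal sets $X_{k,n}$ play the role of your $B_{M,\epsilon}$, with an equivalent inequality).

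Two small precision points worth noting. First, you define $\hat{v}_j:=\hat{U}(Ae_j)$, but the scale definition only guarantees $\tilde{A}\tilde{e}_j$ is nearstandard, which is equivalent to $e_j\in\operatorname{dom}(A^*)$, not $\operatorname{dom}(A)$; the correct numerator is $\hat{U}(\operatorname{st}(\tilde{A}\tilde{e}_j))=\hat{U}(A^*e_j)$, and the identity $U_L(\operatorname{st}(\tilde{A}\tilde{e}_j))=m_L\cdot U_L(e_j)$ comes from the stronger conclusion of Theorem~\ref{theorem_spectral_loebspace} on $\operatorname{st}(G(\tilde{A}))$, not merely the weaker inclusion $U_L\circ A\leq T_L\circ U_L$. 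Second, to ensure $\hat{m}$ is pointwise $\mathbb{R}$-valued (not just a.e.), one should take $\Re(\hat{v}_j/\hat{U}_j)$ on $E_j$, as the paper does; the a.e.\ identity with $m_L$ survives since $m_L$ is real-valued. Neither issue changes the structure of the argument.
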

Before we prove this, we first need a definition and a lemma.
\begin{definition}\label{definition_support_total_Uj}
	 We define the sets
	\begin{align*}
		\hat{\Omega}'&=\bigcup_{j\in\mathbb{N}} \hat{U}_j^{-1}(\mathbb{K}\setminus\{0\})\\
		\Omega_L'&=\hat{\nu}^{-1}(\hat{\Omega'}).
	\end{align*}
\end{definition}
\begin{remark}\label{remark_OmegaL_prime_from_U_L}
	We directly have $\Omega_L'=\bigcup_{j\in\mathbb{N}}\left(\operatorname{st}\circ\left(\tilde{U}(\tilde{e}_j)\right)\right)^{-1}(\mathbb{K}\setminus\{0\})$. Therefore, for any $f\in\Omega_L$, $f\in \Omega_L\setminus\Omega_L'$ if and only if $\forall j\in\mathbb{N}$, $\operatorname{st}\left((\tilde{U}(\tilde{e}_j))(f)\right)=0$, so $\Omega_L\setminus\Omega_L'$ is either empty or one equivalence class. Thus, $\hat{\Omega}\setminus\hat{\Omega}'$ is either empty or a singleton.
\end{remark}
\begin{lemma}\label{lemma_full_measure_Uj}
	$\hat{\Omega}'\in\hat{\mathcal{A}}$, $\Omega_L'\in\mathcal{A}_L$, and $\hat{\mu}(\hat{\Omega}')=\mu_L(\Omega_L')=1$.
\end{lemma}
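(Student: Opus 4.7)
The plan is to split the statement into its measurability component and its measure-value component. For the former, Proposition~\ref{proposition_existence_Uj} gives each $\hat{U}_j$ continuous on $(\hat{\Omega},\hat{d})$, so each $\hat{U}_j^{-1}(\mathbb{K}\setminus\{0\})$ is open in $\hat{\Omega}$; a countable union of Borel sets yields $\hat{\Omega}'\in\hat{\mathcal{A}}$. Measurability of $\hat{\nu}$ then transfers this to $\Omega_L'=\hat{\nu}^{-1}(\hat{\Omega}')\in\mathcal{A}_L$, and $\hat{\mu}(\hat{\Omega}')=\mu_L(\Omega_L')$ is just the defining property of the pushforward. Everything therefore reduces to showing $\mu_L(\Omega_L')=1$.

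For that, I would introduce the $\mu_L$-almost everywhere defined non-negative function
\begin{align*}
	\psi(f):=\sum_{j\in\mathbb{N}} c_j\,\bigl|\operatorname{st}(\tilde{U}(\tilde{e}_j)(f))\bigr|^2,
\end{align*}
which is well-defined a.e.\ because $|\tilde{U}(\tilde{e}_j)|^2$ is limited a.e.\ (the nearstandard part of the partial spectral theorem's preparatory proposition). I would then show both $\psi\leq 1$ pointwise a.e.\ and $\int \psi\,d\mu_L=1$. The pointwise bound rests on the internal identity
\begin{align*}
	\sum_{j=1}^N \tilde{c}_j\,|\tilde{U}(\tilde{e}_j)(f)|^2=\frac{\sum_{j=1}^N\tilde{c}_j|(\tilde{e}_j,f)|^2}{\tilde{\mu}(\{f\})}=1,
\end{align*}
valid for every $f\in\tilde{\Omega}$ by the very definition of $\tilde{\mu}$; truncating to any standard $n$ and passing to standard parts (each term is limited since $\tilde{c}_j|\tilde{U}(\tilde{e}_j)|^2\leq 1$ by the first proposition of Section~\ref{section_hullspace}) produces $\sum_{j=1}^n c_j|\operatorname{st}(\tilde{U}(\tilde{e}_j)(f))|^2\leq 1$, and letting $n\to\infty$ yields $\psi(f)\leq 1$. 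For the integral, Theorem~\ref{theorem_spectral_loebspace}'s isometry gives $\int|\operatorname{st}\circ\tilde{U}(\tilde{e}_j)|^2\,d\mu_L=\|e_j\|^2$, so monotone convergence together with Property~(\ref{scale_def_bias}) of Definition~\ref{definition_scale} delivers $\int\psi\,d\mu_L=\sum_j c_j\|e_j\|^2=1$.

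Combining these, $\int(1-\psi)\,d\mu_L=0$ with $1-\psi\geq 0$ forces $\psi=1$ $\mu_L$-a.e.; in particular $\psi>0$ a.e., which, since each $c_j>0$ by Property~(\ref{scale_def_NS}) of Definition~\ref{definition_scale}, means that $\mu_L$-almost every $f$ admits some $j\in\mathbb{N}$ with $\operatorname{st}(\tilde{U}(\tilde{e}_j)(f))\neq 0$, i.e.\ $f\in\Omega_L'$. The only step requiring care is the interchange of standard part with the finite sum when deriving $\psi\leq 1$, but this is routine because every summand sits in the bounded interval $[0,1]$ and $c_j=\operatorname{st}(\tilde{c}_j)$ by definition.
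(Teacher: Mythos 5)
Your proof is correct, but it takes a genuinely different route from the paper's. The paper works directly with the internal sets $X_{k,n}=\{f\mid \forall j\in[k],\ \tilde{c}_j|\tilde{U}(\tilde{e}_j)(f)|^2\leq\frac{1}{n}\}$, which contain $\Omega_L\setminus\Omega_L'$ in their intersection, and then bounds $\tilde{\mu}(X_{k,n})\leq\frac{k}{n}+\sum_{j>k}\tilde{c}_j\|\tilde{e}_j\|^2$ by splitting the defining sum for $\tilde{\mu}$ and invoking the scale-bias condition; taking $n=k^2$ and letting $k\to\infty$ kills the measure. You instead exploit the exact internal identity $\sum_{j=1}^N\tilde{c}_j|\tilde{U}(\tilde{e}_j)(f)|^2=1$ (an immediate consequence of the definition of $\tilde{\mu}$ and of $\tilde{\mu}(f)>0$), truncate it to standard $n$ and pass to standard parts to get $\psi\leq 1$ pointwise, then invoke the S-integrability/isometry result $\int|\operatorname{st}\circ\tilde{U}(\tilde{e}_j)|^2\,d\mu_L=\|e_j\|^2$ together with monotone convergence and Property~(\ref{scale_def_bias}) to get $\int\psi\,d\mu_L=1$, forcing $\psi=1$ a.e.\ and hence $\Omega_L'$ to have full measure. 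What the paper's approach buys is self-containment at the level of internal measure theory: it only uses the definition of $\tilde{\mu}$ and underspill, never the harder S-integrability machinery of Section~\ref{section_loebspace}. What your approach buys is conceptual economy: it replaces explicit internal manipulation with a clean ``squeeze'' of a standard function between its pointwise bound and its integral, and it makes the role of the scale-bias condition more transparent (it is exactly what forces $\int\psi\,d\mu_L$ up to $1$). Both are valid; both ultimately hinge on Property~(\ref{scale_def_bias}). One cosmetic point: your pointwise bound $\psi\leq 1$ actually holds everywhere on $\Omega_L$, not just a.e., since $\tilde{c}_j|\tilde{U}(\tilde{e}_j)(f)|^2\leq 1$ is a uniform bound over all $f\in\tilde{\Omega}$ and $\tilde{c}_j\not\simeq 0$ for standard $j$; the a.e.\ qualifier is only forced on you by writing $\operatorname{st}$ inside a series, and even that is not strictly needed here.
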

\begin{proof}
	Since each $\hat{U}_j$ is continuous, $\hat{\Omega}'$ is open. Therefore, $\hat{\Omega}'$ is in $\hat{\mathcal{A}}$. As $\hat{\nu}$ is a measurable function, $\Omega_L'\in\mathcal{A}_L$. By definition, we have $\hat{\mu}(\hat{\Omega}')=\mu_L(\Omega_L')$. To conclude the proof, it is sufficient to show $\mu_L(\Omega_L')=1$
	
	For $k,n$ in $\mathbb{N}$, let $X_{k,n}=\{f\in \tilde{\Omega}\;\mid\; \forall j\in[k],$ $\tilde{c}_j|(\tilde{U}(\tilde{e}_j))(f)|^2\leq\frac{1}{n}\}$. We have  that $X_{k,n}$ is internal, and
	\begin{align*}
		\Omega_L\setminus\Omega_L'=\{f\in\Omega_L\;\mid\;\forall j\in\mathbb{N},\;\operatorname{st}(\tilde{U}(\tilde{e}_j)(f))=0\} \subset\bigcap_{k,n\in\mathbb{N}}X_{k,n}.
	\end{align*}
	
	Furthermore, for $k$, $n$ in $\mathbb{N}$:
	\begin{align*}
		\tilde{\mu}(X_{k,n})&=\sum_{j=1}^N\tilde{c}_j\sum_{f\in X_{k,n}}|(\tilde{e}_j,f)|^2\\
		&=\sum_{j=1}^k\tilde{c}_j\sum_{f\in X_{k,n}}|(\tilde{e}_j,f)|^2+\sum_{j=k+1}^N\tilde{c}_j\sum_{f\in X_{k,n}}|(\tilde{e}_j,f)|^2 \\
		&\leq\sum_{j=1}^k\tilde{c}_j\sum_{f\in X_{k,n}}\tilde{\mu}(f)|(\tilde{U}(\tilde{e}_j))(f)|^2 +\sum_{j=k+1}^N \tilde{c}_j\|\tilde{e}_j\|^2\\
		&\leq \frac{1}{n}\sum_{j=1}^k\sum_{f\in X_{k,n}}\tilde{\mu}(f)+\sum_{j=k+1}^N \tilde{c}_j\|\tilde{e}_j\|^2\\
		&=\tilde{\mu}(X_{k,n})\frac{k}{n}+\sum_{j=k+1}^N \tilde{c}_j\|\tilde{e}_j\|^2\leq \frac{k}{n}+\sum_{j=k+1}^N \tilde{c}_j\|\tilde{e}_j\|^2. \\
	\end{align*}
	Hence,
	\begin{align*}	
		 \mu_L(\Omega_L\setminus\Omega_L')\leq\mu_L(X_{k,n})\leq\frac{k}{n}+\operatorname{st}\left(\sum_{j=k+1}^N \tilde{c}_j\|\tilde{e}_j\|^2\right).
	\end{align*}
	By Property~(\ref{scale_def_bias}) of Definition~\ref{definition_scale}, we know that $\lim_{k\rightarrow\infty}\operatorname{st}\left(\sum_{j=k+1}^N \tilde{c}_j\|\tilde{e}_j\|^2\right)=0$. By posing $n=k^2$ and taking limits, we find $\mu_L(\Omega_L\setminus\Omega_L')=0$, concluding the proof of the lemma.
\end{proof}

Now, we can proceed with the proof.
\begin{proof}[Proof of Proposition~\ref{proposition_pushforward_mult}]
	The idea is that $U_L(Ax)=m_L\cdot U_L(x)$, and so $m_L$ can be written in terms of objects that can be pushed through $\hat{\nu}$. One has to be careful when $U_L(x)=0$, which is why $\hat{\Omega}'$ is used. 
	
	First, we partition $\hat{\Omega}'$ with $\{C_j\}_{j\in\mathbb{N}}$, defined by: 
	
	\begin{align*}
		C_j:=\left(\hat{U}_j^{-1}(\mathbb{K}\setminus\{0\})\right)\setminus\left(\bigcup_{k=1}^{j-1}\left(\hat{U}_k^{-1}(\mathbb{K}\setminus\{0\})\right)\right).
	\end{align*} 
	
	All $C_j$ are measurable in $\hat{\Omega}$, and they are pairwise disjoint. Since $\bigcup_{j\in\mathbb{N}}C_j=\hat{\Omega}'$, $\{C_j\}_{j\in\mathbb{N}}$ forms a measurable countable partition.
		
	Furthermore, for each $j\in\mathbb{N}$, $\left(\frac{\hat{U}(\operatorname{st}(\tilde{A}\tilde{e}_j))}{\hat{U}_j}\right)|_{C_j}$ is almost-everywhere defined in $C_j$, with values in $\mathbb{K}$, and is measurable (more precisely, we assume an explicit representation of $\hat{U}(\operatorname{st}(\tilde{A}\tilde{e}_j))$ in its $L_2$ class). Thus, there exists a measurable function $\hat{m}:\hat{\Omega}\rightarrow\mathbb{R}$ such that for any $j\in\mathbb{N}$, $\hat{m}|_{C_j}=\Re\left(\frac{\hat{U}(\operatorname{st}(\tilde{A}\tilde{e}_j))}{\hat{U}_j}\right)\big|_{C_j}$. 
	
	All that is left is to show $m_L=\hat{m}\circ\hat{\nu}$. For any $j\in\mathbb{N}$, we have, $\mu_L$-almost everywhere on $\hat{\nu}^{-1}(C_j)$:
	\begin{align*}
		(\hat{m}\circ\hat{\nu})|_{\hat{\nu}^{-1}(C_j)}&=\hat{m}|_{C_j}\circ\hat{\nu}|_{\hat{\nu}^{-1}(C_j)}\\
		&=\Re\left(\frac{\hat{U}(\operatorname{st}(\tilde{A}\tilde{e}_j))}{\hat{U}_j}\right)\bigg|_{C_j}\circ\hat{\nu}|_{\hat{\nu}^{-1}(C_j)}\\
		&=\Re\left(\frac{\hat{U}(\operatorname{st}(\tilde{A}\tilde{e}_j))}{\hat{U}_j}\circ\hat{\nu}\right)\bigg|_{\hat{\nu}^{-1}(C_j)}\\
		&=\Re\left(\frac{U_L(\operatorname{st}(\tilde{A}\tilde{e}_j))}{U_L(e_j)}\right)\bigg|_{\hat{\nu}^{-1}(C_j)}\\
		&=\Re(m_L)|_{\hat{\nu}^{-1}(C_j)}=m_L|_{\hat{\nu}^{-1}(C_j)}.
	\end{align*}
	Furthermore, we have
	\begin{align*}
		\bigcup_{j\in\mathbb{N}}\hat{\nu}^{-1}(C_j)=\hat{\nu}^{-1}\left(\bigcup_{j\in\mathbb{N}} C_j\right)=\hat{\nu}^{-1}(\hat{\Omega}')=\Omega_L'.
	\end{align*}
	Therefore, $\hat{m}\circ\hat{\nu}$ and $m_L$ agree almost everywhere on $\Omega_L'$, and so agree almost-everywhere on $\Omega_L$.
\end{proof}
Now, along with $(\hat{\Omega},\hat{d})$, $\hat{\mu}$ and $\hat{U}$, we can define $\hat{m}$ and $\hat{T}$, completing the list of objects induced by our process.

\begin{definition}
	Fix $\hat{m}:\hat{\Omega}\rightarrow\mathbb{R}$ as any measurable function such that $\hat{m}\circ\hat{\nu}=m_L$ (in the almost-everywhere sense). Furthermore, let $\hat{T}$ be the induced multiplication operator on $\hat{H}$.
\end{definition}
\begin{remark}
	As a reminder, $\hat{T}$ is automatically densely defined and self-adjoint, since $\hat{m}$ is $\hat{\mu}$-almost everywhere real valued.
\end{remark}
We now show these objects realize the conditions of Theorem~\ref{theorem_spectral_symmetric}, completing its proof.
\begin{proof}[Proof of Theorem~\ref{theorem_spectral_symmetric}]
	For any $x\in\operatorname{dom}(A)$, we have, $\mu_L$-almost everywhere, as a direct consequence of Theorem~\ref{theorem_spectral_loebspace}:
	\begin{align*}
		(\hat{m}\cdot \hat{U}(x))\circ\hat{\nu}=m_L\cdot U_L(x)=T_L(U_L(x))=U_L(Ax)=(\hat{U}(Ax))\circ\hat{\nu}\\
		\implies \int_{\hat{\Omega}}|\hat{m}\cdot \hat{U}(x)|^2d\mu=\int_{\hat{\Omega}}|\hat{U}(Ax)|^2d\mu=\|Ax\|^2\in\mathbb{R}.
	\end{align*}
	Therefore, $\hat{U}(x)\in\operatorname{dom}(\hat{T})$. Furthermore, we have found $\hat{\mathcal{I}}(\hat{T}(\hat{U}x))=\hat{\mathcal{I}}(\hat{U}(Ax))$ from which we deduce $\hat{T}(\hat{U}x)=\hat{U}(Ax)$. Since $x$ is arbitrary, we have 
	\begin{align*}
		\hat{U}\circ A\leq \hat{T}\circ \hat{U}.
	\end{align*}
\end{proof}
	\section{Self-adjoint operators}\label{section_sa}

In this section specifically, we now assume $A$ is self-adjoint, and find out how Theorem~\ref{theorem_spectral_symmetric} relates to the other versions of the spectral theorem. More specifically, we will show the following theorem and discuss its relevancy:

\begin{theorem}\label{spectral_theorem}
	If $A$ is self-adjoint, there exists a sampling for $A$ paired with a compatible standard-biased scale such that the isometry $\hat{U}$ induced by the process is surjective.
\end{theorem}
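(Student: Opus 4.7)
The plan is to use the spectral resolution of $A$ provided by Appendix~\ref{appendix_pullback} to engineer a sampling and compatible scale that produce the natural unitary identification of $H$ with a countable direct sum of $L_2$-spaces over $\mathbb{R}$. Since $A$ is self-adjoint and $H$ is separable, I would obtain a countable cyclic decomposition $H=\bigoplus_{k\in\mathbb{N}} H_k$, where each $H_k$ is the closed subspace generated from a unit vector $v_k$ by the bounded Borel functional calculus of $A$, with associated spectral probability measure $\mu_k$ on $\mathbb{R}$; each $H_k$ is classically unitary to $L_2(\mathbb{R},\mu_k)$ via $\phi(A)v_k\mapsto\phi$, carrying $A|_{H_k}$ to multiplication by the coordinate.

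For the sampling, I would take $(I_i)_{i\in[M]}$ to be a hyperfine internal partition of a hyperlarge bounded interval of ${}^*\mathbb{R}$ with all diameters infinitesimal (lumping the tail in one cell), and extend $(v_k)_{k\in\mathbb{N}}$ to an internal hyperfinite family $(v_k)_{k\in[K]}$ by $\aleph_1$-saturation. Setting $\tilde{f}_{k,i}:={}^{*}E(I_i)v_k/\|{}^{*}E(I_i)v_k\|$ when nonzero, the internal family $\tilde{\Omega}$ collects these vectors; orthogonality of the $H_k$'s and the projection-valued nature of $E$ make $\tilde{\Omega}$ orthonormal. Take $\tilde{H}:={}^{*}\mathrm{span}(\tilde{\Omega})$, and $\tilde{A}$ the internal self-adjoint operator with $\tilde{A}\tilde{f}_{k,i}=\lambda_i\tilde{f}_{k,i}$, where $\lambda_i$ is the midpoint of $I_i$. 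The sampling axioms then follow; the approximation property uses that for any bounded compactly-supported $\phi$, the internal Riemann-type sum $\sum_i\phi(\lambda_i){}^{*}E(I_i)v_k\in\tilde{H}$ is infinitesimally close to $\phi(A)v_k$ and its $\tilde{A}$-image is close to $A\phi(A)v_k$, extending by linearity to any $x\in\operatorname{dom}(A)$ via its cyclic-subspace decomposition. For the scale, pick a countable family $(\phi_n)_{n\in\mathbb{N}}$ of bounded compactly-supported continuous functions on $\mathbb{R}$ whose restrictions are dense in $L_2(\mu_k)$ for every $k$, run Gram--Schmidt on $\{\phi_n(A)v_k\}_{n,k\in\mathbb{N}}$ to obtain an orthonormal Hilbert basis $(e_j)_{j\in\mathbb{N}}$ of $H$ with each $e_j\in\operatorname{dom}(A)$, lift to $\tilde{e}_j\in\tilde{H}$, and use the $2^{-j}$-weight scheme from the existence proof for $\tilde{c}_j$.

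The payoff should be that the equivalence relation $\sim$ of Section~\ref{section_hullspace} then satisfies $\tilde{f}_{k,i}\sim\tilde{f}_{k',i'}$ iff $k=k'$ and $\operatorname{st}(\lambda_i)=\operatorname{st}(\lambda_{i'})$, which gives $\hat{\Omega}$ a clean identification with $\bigsqcup_k\operatorname{supp}(\mu_k)$ (up to at most one point of measure zero), $\hat{m}$ as the coordinate function on each piece, $\hat{\mu}$ restricted to the $k$-th piece as a positive scalar multiple of $\mu_k$, and $\hat{U}(\phi_n(A)v_k)$ as (a positive scalar times) the function $\phi_n$ on the $k$-th piece extended by zero elsewhere. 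Density of $(\phi_n)$ in each $L_2(\mu_k)$ then yields $\hat{U}(H)=\hat{H}$.

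The main obstacle is establishing the claimed structure of $\sim$. By Proposition~\ref{proposition_sim_iff_Uj} this reduces to computing $\operatorname{st}((\tilde{U}(\tilde{e}_j))(\tilde{f}_{k,i}))$ for standard $j$: orthogonality of the $H_k$'s (together with each $e_j$ lying in a finite sum of cyclic subspaces) forces $(\tilde{e}_j,\tilde{f}_{k,i})\simeq 0$ whenever $e_j$ has no $H_k$-component, so $\hat{U}_j$ vanishes on the $k$-th component in that case; within a single cyclic component, the density of $(\phi_n|_{\operatorname{supp}\mu_k})$ in $L_2(\mu_k)$ ensures some $\hat{U}_j$ distinguishes any two distinct standard eigenvalues. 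Once this is in place, identification of $\hat{\mu}$, $\hat{m}$, and surjectivity of $\hat{U}$ follow by direct computation using the spectral functional calculus.
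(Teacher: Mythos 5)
Your plan follows the paper's overall strategy quite closely: fix the spectral resolution (justified via Appendix~\ref{appendix_pullback}), build a countable orthogonal cyclic decomposition $H=\overline{\bigoplus_k H_k}$ with generators $v_k$ (the paper's $g_k$), hyperfinely partition the spectrum, take normalized spectral slices ${}^*P(I)v_k/\|{}^*P(I)v_k\|$ as the eigenvectors, and use a $2^{-j}$-weighted scale. Where you diverge is in the scale: you run Gram--Schmidt on $\{\phi_n(A)v_k\}$ for compactly supported continuous $\phi_n$ to get an \emph{orthonormal} $(e_j)$, then ``lift'' to $\tilde{e}_j\in\tilde{H}$. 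The paper instead takes $e_j=P([a_j,b_j[)g_{k_j}$ with rational endpoints (not normalized, not mutually orthogonal within a sector) and sets $\tilde{e}_j={}^*e_j$, which lies in $\tilde{H}$ \emph{exactly} because $[a_j,b_j[$ is a union of partition cells. That choice is deliberate: it gives the identity $(f_k^s,{}^*e_j)=\|\tilde{g}_k^s\|\,\tilde{\chi}^{(j)}(f_k^s)$ with $\tilde{\chi}^{(j)}$ a $\{0,1\}$-valued indicator, so $\tilde{\mu}(f)$, $\tilde{U}({}^*e_j)$ and the pseudometric $\tilde{d}$ all reduce to purely combinatorial sums of $\tilde{c}_j$'s over indicators, with no error terms. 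Your scale replaces these by Riemann-type approximations $(\tilde{e}_j,\tilde{f}_{k,i})\simeq\psi_j(\lambda_i)\sqrt{\mu_k(I_i)}$, where the relative errors depend on the modulus of continuity of each $\psi_j$, and the lift $\tilde{e}_j$ must be chosen carefully (e.g.\ $\operatorname{proj}_{\tilde{H}}{}^*e_j$) to keep the cross-sector inner products exactly zero; neither is spelled out.

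Two concrete points need repair. First, your description of $\hat{\mu}$ is not right: on the $k$-th piece, $\tilde{\mu}(\tilde{f}_{k,i})\approx D(\lambda_i)\,\mu_k(I_i)$ with $D(t)=\sum_j c_j|\psi_j(t)|^2$, so $\hat{\mu}$ restricted to that piece has Radon--Nikodym density $D$ with respect to $\mu_k$, \emph{not} a constant multiple. Correspondingly $\hat{U}(\phi_n(A)v_k)$ is $\phi_n/\sqrt{D}$ on that piece, not ``a positive scalar times $\phi_n$.'' Your surjectivity argument can still be salvaged (approximate $f\sqrt{D}$ in $L_2(\mu_k)$ by $\phi_n$'s), but you must first control $D$ $\hat{\mu}$-a.e.\ away from $0$ and $\infty$; the $\infty$ case, as you implicitly acknowledge, dumps points into the single null class, but this needs to be tied to Lemma~\ref{lemma_full_measure_Uj}-type estimates rather than asserted. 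Second, the claimed characterization of $\sim$ only makes sense on $\Omega_L'$ and requires that for every $k$ and every $t\in\operatorname{supp}(\mu_k)$ there be some standard $j$ in the $k$-sector with $\psi_j(t)\ne0$; this \emph{does} follow from your choice of $\phi_n$ plus the observation that Gram--Schmidt preserves the pointwise span and continuous functions agreeing $\mu_k$-a.e.\ agree on $\operatorname{supp}(\mu_k)$, but that chain of reasoning is absent. The paper sidesteps all of this by proving surjectivity from $\hat{\chi}_{a,b,k}\cdot\hat{U}(x)\in\hat{U}(H)$ and the dominated-convergence argument showing $1\in\hat{U}(H)$, which never requires identifying $\hat{\Omega}$ with $\bigsqcup_k\operatorname{supp}(\mu_k)$ nor dividing by a density. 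In short: same skeleton, but your tactical choices trade the paper's exact combinatorial formulas for approximate analytic ones, and the missing error control and the incorrect claim about $\hat{\mu}$ are real gaps that must be closed before the argument is complete.
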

\begin{remark}
	Here, the condition can directly be loosened to $A$ having some self-adjoint extension on $H$, since  a sampling for an extension of $A$ is also a sampling for $A$. It cannot be loosened further: if $\hat{U}$ is surjective, then $\hat{U}^{-1}\circ \hat{T}\circ\hat{U}$ is a self-adjoint extension of $A$.
\end{remark}

In proving this, the main problem is that not every sampling and scale will work just from $A$ being self-adjoint. In fact, if $B$ is a symmetric operator on $H$ with no self-adjoint extension, and $(\tilde{H},\tilde{B},\tilde{\Omega})$ is a sampling for $B$, then $(\tilde{H},\operatorname{id}_{\tilde{H}},\tilde{\Omega})$ is a sampling for $\operatorname{id_H}$. Furthermore, any standard-biased scale $((\tilde{e}_j)_{j\in[N]}(\tilde{c}_j)_{j\in[N]})$ compatible with $(\tilde{H},\tilde{B},\tilde{\Omega})$ will be compatible with $(\tilde{H},\operatorname{id}_{\tilde{H}},\tilde{\Omega})$. From inspection, neither $\tilde{B}$ nor $\operatorname{id}_{\tilde{H}}$ impact the resulting measure space or isometry, they only impact the multiplication operator. Thus, we get that even though $\operatorname{id}_H$ is self-adjoint (and bounded), the resulting isometry $\hat{U}$ from $(\tilde{H},\operatorname{id}_{\tilde{H}},\tilde{\Omega})$ and $((\tilde{e}_j)_{j\in[N]}(\tilde{c}_j)_{j\in[N]})$ will not be surjective.

What that seems to indicate is that the sampling or the scale has to be created using a method that cannot work unless the operator at least has a self-adjoint extension. For that, we turn to projection-valued measures:

\begin{definition}
	A function of the form $P:\operatorname{Borel}(\mathbb{R})\rightarrow\{\operatorname{proj}_E\;|\;\text{E $\leq$ H is closed} \}$ is called a projection-valued measure if 
	\begin{itemize}
		\item $P(\emptyset)=0$ and $P(\mathbb{R})=\operatorname{id}$.
		\item For all $V_1, V_2\in\operatorname{Borel}(\mathbb{R})$, $P(V_1\cap V_2)=P(V_1)P(V_2)$.
		\item If $(V_n)_{n\in\mathbb{N}}$ is a pairwise disjoint sequence in $\operatorname{Borel}(\mathbb{R})$, we have that the equality  $P(\bigsqcup_{n\in\mathbb{N}}V_n)=\sum_{n\in\mathbb{N}}P(V_n)$ holds.
	\end{itemize}
\end{definition}
First, we can extract some basic properties:
\begin{itemize}
	\item If $V_1$ and $V_2$ are disjoint, $P(V_1)(H)$ and $P(V_2)(H)$ are orthogonal subspaces.
	\item For any $x,y\in H$, $V\rightarrow (P(V)x,y)$ is a $\mathbb{K}$-valued finite signed measure, which we'll note $\mu_{x,y}$. Furthermore, if $x=y$, that measure is positive real.
\end{itemize}

Then, $P$ naturally induces a form of Borel functional calculus:

\begin{definition}
	For borelian measurable $f:\mathbb{R}\rightarrow\mathbb{R}$, we define the (possibly unbounded) operator $\int_{\mathbb{R}} fdP$ on $H$ such that:
	\begin{itemize}
		\item $\operatorname{dom}(\int_{\mathbb{R}} fdP)=\{x\in H \;|\;\int_{\mathbb{R}}|f|^2d\mu_{x,x}\in\mathbb{R} \}$.
		\item For any $x\in \operatorname{dom}(\int_{\mathbb{R}} fdP)$ and $y\in H$, we have $(y,(\int_{\mathbb{R}} fdP) x)=\int_{\mathbb{R}}f d\mu_{y,x}$.
	\end{itemize}
\end{definition}
\begin{remark}
	It is widely known (and provable with elementary operator theory and measure theory) that there is a unique operator satisfying these conditions and that it is densely-defined and self-adjoint. Furthermore, $\|(\int_{\mathbb{R}}fdP)x\|^2\leq \int_{\mathbb{R}}|f|^2d\mu_{x,x}$ for all applicable $x$. Finally, for any borelian mesurable $f$, $g$, and for any $a\in\mathbb{R}$, we have $\int_{\mathbb{R}}f_1dP+a\int_{\mathbb{R}}f_2dP\leq\int_{\mathbb{R}}(f_1+af_2)dP$.  Details can be found in \cite[Section X.4]{conway2019course}.
\end{remark}

Then, as a corollary of Theorem~\ref{theorem_spectral_symmetric} (or even \ref{theorem_spectral_loebspace}), we have the spectral resolution theorem.
\begin{theorem}[Spectral resolution theorem]\label{spec_res}
	For any self-adjoint operator $A$ on $H$, there exists a projection-valued measure $P$ on $\operatorname{Borel}(\mathbb{R})$ such that 
	\begin{align*}
		A=\int_{\mathbb{R}}\operatorname{id} dP.
	\end{align*}
	In other words, $A$ admits a spectral resolution.
\end{theorem}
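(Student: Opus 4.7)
The plan is to push forward to $H$ the canonical projection-valued measure of the multiplication operator produced by Theorem~\ref{theorem_spectral_symmetric}, using self-adjointness of $A$ to ensure the pullback behaves well. First, apply Theorem~\ref{theorem_spectral_symmetric} to $A$ to obtain a probability space $(\Omega,\mu)$, a Borel measurable $m:\Omega\to\mathbb{R}$, the associated multiplication operator $T$ on $L_2(\Omega,\mu)$, and an isometry $U:H\to L_2(\Omega,\mu)$ with $T\circ U\geq U\circ A$. On the target side, $T$ is a self-adjoint multiplication operator and thus admits the obvious projection-valued measure $E(B):=M_{\chi_{m^{-1}(B)}}$ for $B\in\operatorname{Borel}(\mathbb{R})$. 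The candidate PVM on $H$ will then be $P(B):=U^*E(B)U$.

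The crux, and what I expect to be the main obstacle, is to show that each $E(B)$ commutes with $P_W:=UU^*$, the orthogonal projection onto $W:=U(H)\subset L_2(\Omega,\mu)$; without this, even the identities $P(B)^2=P(B)$ and $P(B_1)P(B_2)=P(B_1\cap B_2)$ would fail. This is exactly where self-adjointness of $A$ enters. From $T\circ U\geq U\circ A$, for every $x\in\operatorname{dom}(A)$ one has $(T\pm i)Ux=U(A\pm i)x$. Since $A=A^*$, the maps $A\pm i:\operatorname{dom}(A)\to H$ are bijections, so composing with the bounded $(T\pm i)^{-1}$ on the left and $(A\pm i)^{-1}$ on the right yields the identities $(T\pm i)^{-1}U=U(A\pm i)^{-1}$ of bounded operators $H\to L_2(\Omega,\mu)$. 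In particular, $W$ is invariant under both resolvents $(T\pm i)^{-1}$, and since those two operators are mutually adjoint, $W^\perp$ is invariant under both as well; hence $P_W$ commutes with $(T+i)^{-1}$ and $(T-i)^{-1}$. Appealing then to the standard fact that these two resolvents generate, in the strong operator topology, the full bounded Borel functional calculus of $T$ (see \cite[Section X.4]{conway2019course}), $P_W$ must commute with every $E(B)$.

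Given the commutation, the remaining verifications are routine. The PVM axioms for $P$ reduce, via $U^*U=\operatorname{id}_H$ and $U^*P_W=U^*$, to the corresponding facts for $E$; for instance, $P(B)^2=U^*E(B)P_WE(B)U=U^*P_WE(B)^2U=U^*E(B)U=P(B)$, and similarly for multiplicativity and countable additivity (the latter in the strong operator topology). To verify $A=\int_{\mathbb{R}}\operatorname{id}\,dP$, compute $\mu_{x,x}(B)=(P(B)x,x)=\int_{m^{-1}(B)}|Ux|^2\,d\mu$, so $\int_{\mathbb{R}}|t|^2\,d\mu_{x,x}(t)=\|m\cdot Ux\|_{L_2}^2$, which is finite exactly when $Ux\in\operatorname{dom}(T)$; a parallel computation of the bilinear form then gives $(\int_{\mathbb{R}}\operatorname{id}\,dP)x=U^*TUx$ on that domain. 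For $x\in\operatorname{dom}(A)$ this equals $U^*UAx=Ax$, so $A\subseteq\int_{\mathbb{R}}\operatorname{id}\,dP$; since both sides are self-adjoint and self-adjoint operators are maximally symmetric, equality follows.
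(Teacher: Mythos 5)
Your overall strategy matches the paper's: pull back the canonical spectral measure $\hat{P}(V)=T_{\mathbf{1}_{m^{-1}(V)}}$ of $T$ to $P:=U^*\hat{P}(\cdot)U$, where the crux is showing $\operatorname{proj}_{U(H)}=UU^*$ commutes with $\hat{P}(V)$ (equivalently, $U(H)$ reduces $T$), and then verify the PVM axioms and $A=\int\operatorname{id}dP$, the latter via maximal symmetry of self-adjoint operators. The final verifications are fine.

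The key divergence---and the gap---is in how you obtain the reduction property. You derive it from the Cayley-style identities $(T\pm i)^{-1}U = U(A\pm i)^{-1}$, i.e.\ from the resolvents of $T$ at $\pm i$. This requires complex scalars: $A\pm i$ and $T\pm i$ are meaningless when the ground field is $\mathbb{R}$. But the paper's standing assumption is $\mathbb{K}\in\{\mathbb{R},\mathbb{C}\}$, and the appendix remark calls this out explicitly: \emph{``some of the details can differ, notably due to the fact that the Hilbert space might be real, excluding the direct use of operators such as $T-tiI$.''} The paper's own argument is deliberately field-agnostic: it first proves directly (using only symmetry, domain duality, and $UU^*=\operatorname{proj}_{U(H)}$) that $U(H)$ reduces $T$ and that $U\circ A = T\circ U$ with equality; then it shows $\hat{P}([a,b])$ preserves $U(H)$ by cutting off with $\mathbf{1}_{m^{-1}([-k,k])}$, observing that $U(H)$ reduces every power $T^n$ and hence $(p\circ m)\cdot$ for real polynomials $p$, invoking Stone--Weierstrass and dominated convergence to pass to indicators of intervals, and finally letting $k\to\infty$. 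None of that uses complex multiplication. So your proof, as written, establishes the theorem only over $\mathbb{C}$; to cover the real case you would either need to complexify $H$ and $A$ (and then check the resulting PVM descends to the real form, which is some extra work) or replace the resolvent argument by a real-polynomial argument along the lines of the paper's. Everything else---the reduction of the PVM axioms to those of $\hat{P}$ once commutation is known, the domain identification via $\|m\cdot Ux\|_{L_2}$, and the maximal-symmetry finish---is sound.
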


The proof that it follows from Theorem~\ref{theorem_spectral_symmetric} is outlined in Appendix \ref{appendix_pullback}. We note that often, this theorem is simply called the spectral theorem, as it is known to be equivalent to the multiplication operator version. We use this to create the groundwork for the sampling and scale.

We now fix $P$, the projection-valued measure on $\operatorname{Borel}(\mathbb{R})$ resolving $A$. Let $\{g_k'\}_{k\in\mathbb{N}}$ be a dense spanning set in $H$. We define the sequence $(g_k)_{k\in\mathbb{N}}$ and $(H_k)_{k\in\mathbb{N}}$ recursively, so that:

\begin{itemize}
	\item $g_k=g_k'-\operatorname{proj}_{\sum_{l=1}^{k-1}H_l}g_k'$.
	\item $H_k=\overline{\operatorname{span}(\{P([a,b[)g_k)\;|\;a,b\in\mathbb{Q} \})}$.
\end{itemize}

We note that $g_1=g_1'$, which may or may not hold for other $k\in\mathbb{N}$. Since $g_k\in H_l^\perp$ for any $l<k$, we have that for $a_1,a_2,b_1,b_2\in\mathbb{Q}$, $k_1>k_2\in\mathbb{N}$:

\begin{align*}
	(P([a_1,b_1[)g_{k_1},P([a_2,b_2[)g_{k_2})&=(g_{k_1},P([a_1,b_1[\cap [a_2,b_2[)g_{k_2})\\
	&=(g_{k_1},P([\max(a_1,a_2),\min(b_1,b_2)[)g_{k_2})=0.
\end{align*}

Thus, whenever $k_1\neq k_2$, $H_{k_1}\perp H_{k_2}$. Furthermore, $g_k=\lim_{n\rightarrow\infty}P([-n,n[)g_k\in H_k$, and so $g_k'\in\bigoplus_{l=1}^k H_l$. From that, we conclude that $H=\overline{\bigoplus_{k\in\mathbb{N}}H_k}$.

Finally, let $(e_j)_{j\in\mathbb{N}}$ be a counting of the set $\{ P([a,b[)g_k\;|\; a,b\in\mathbb{Q}, k\in\mathbb{N} \}\setminus\{0\}$. For all $j$ we have $e_j\neq0$ by definition, and $\overline{\operatorname{span}(\{e_j\}_{j\in\mathbb{N}})}\supset \bigoplus_{k\in\mathbb{N}}H_k$, and so $(e_j)_{j\in\mathbb{N}}$ is dense-spanning. We also pose, for $j\in\mathbb{N}$, $a_j,b_j\in\mathbb{Q}$ and $k_j\in\mathbb{N}$ such that $e_j=P([a_j,b_j[)g_{k_j}$.

Now, we construct a suitable sampling for $A$ and compatible scale. Let $N\in{}^*\mathbb{N}\setminus\mathbb{N}$. Let $\tilde{S}=\{[\frac{l}{N!},\frac{l+1}{N!}[\}_{l=-N!^2}^{N!^2-1}$ so that it partitions $[-N!,N![$. 

We note that for any standard $a,b\in\mathbb{Q}$, we have  ${}^*[a,b[=\sqcup_{N!a\leq l <N!b}[\frac{l}{N!},\frac{l+1}{N!}[$, and so $\{s\in\tilde{S}\;|\;s\subset {}^*[a,b[ \}$ partitions ${}^*[a,b[$. By overspill, let $K\in{}^*\mathbb{N}\setminus\mathbb{N}$ such that $\forall j\in [K]$,  $\{s\in\tilde{S}\;|\;s\subset [{}^*a_j,{}^*b_j[ \}$ partitions $[{}^*a_j,{}^*b_j[$.

For $k\in{}^*\mathbb{N}$, $s\in\tilde{S}$, let $\tilde{g}_k^s:={}^*P(s){}^*g_k\in{}^*H_k$. We note that if $(k_1,s_1)\neq(k_2,s_2)$ in ${}^*\mathbb{N}\times\tilde{S}$, then $(\tilde{g}_{k_1}^{s_1},\tilde{g}_{k_2}^{s_2})=({}^*g_{k_1},{}^*P(s_1\cap s_2){}^*g_{k_2})=0$. Then, we define:
\begin{align*}
	\tilde{\Psi}:&=\{(k,s)\in{}^*\mathbb{N}\times\tilde{S}\;|\;\exists j\in[K]\text{ s.t. } ({}^*e_j,\tilde{g}_k^s)\neq 0 \}\\
	\tilde{H}&={}^*\operatorname{span}(\{\tilde{g}_k^s\;|\; (k,s)\in\tilde{\Psi} \})\\
	\tilde{\Omega}&=\{f_k^s:=\frac{\tilde{g}_k^s}{\|\tilde{g}_k^s\|}\;|\;(k,s)\in\tilde{\Psi} \}\\
	\tilde{A}&:\tilde{H}\rightarrow\tilde{H}\\ \tilde{A}&\left(\sum_{(k,s)\in\tilde{\Psi}}a_k\tilde{g}_k^s\right) =\sum_{(k,s)\in\tilde{\Psi}}\frac{l_s}{N!}a_k\tilde{g}_k^s,
\end{align*}
where $l_s$ is the unique hyperinteger such that $s=[\frac{l_s}{N!},\frac{l_s+1}{N!}[$. 

Finally, for $j_0\in[K]$, let $$\tilde{c}_{j_0}=\frac{\frac{1}{2^{j_0}}\min(1,\frac{1}{\|{}^*e_{j_0}\|^2})}{\sum_{j\in[K]}\frac{1}{2^{j}}\min(1,\frac{1}{\|{}^*e_{j}\|^2})\|{}^*e_j\|^2 }>0.$$ We note, through convergence of the corresponding sum, that the denominator is infinitely close to $\sum_{j\in\mathbb{N}}\frac{1}{2^{j}}\min(1,\frac{1}{\|e_{j}\|^2})\|e_j\|^2\in\mathbb{R}_{>0}$. Therefore, for any infinite $L$, we have both $\sum_{j=L}^K \tilde{c}_j\|{}^*e_j\|^2$ and $\sum_{j=L}^{K} \tilde{c}_j$ are infinitesimal.

We can now proceed to show the following.

\begin{prop}
	$(\tilde{H},\tilde{A},\tilde{\Omega})$ is a sampling for $A$, for which $(({}^*e_j)_{j\in[K]}, (\tilde{c}_j)_{j\in[K]})$ is a compatible standard-biased scale.
\end{prop}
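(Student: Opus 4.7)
The plan is to verify, one at a time, conditions (\ref{sampling_def_H})--(\ref{sampling_def_approx}) of Definition \ref{definition_sampling} together with conditions (\ref{scale_def_NS})--(\ref{scale_def_compat_Omega}) of Definition \ref{definition_scale}. The main load falls on the approximation clause (\ref{sampling_def_approx}); everything else is either structural or bookkeeping.

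For the structural conditions, the key preliminary observation is that $\tilde{\Psi}$ is ${}^*$finite. Indeed, the orthogonality $H_{k_1}\perp H_{k_2}$ for $k_1\neq k_2$ transfers to $(\tilde{g}_{k_1}^{s_1},\tilde{g}_{k_2}^{s_2})=0$ whenever $k_1\neq k_2$, so a witness $j\in[K]$ for $(k,s)\in\tilde{\Psi}$ forces $k=k_j$; hence $\tilde{\Psi}\subseteq\{k_1,\ldots,k_K\}\times\tilde{S}$, which is ${}^*$finite. This settles (\ref{sampling_def_H}). Pairwise orthogonality of the $\tilde{g}_k^s$ makes $\tilde{\Omega}$ an orthonormal ${}^*$basis of $\tilde{H}$, and $\tilde{A}$ is diagonal with real eigenvalue $l_s/N!$ on each $f_k^s$; this yields (\ref{sampling_def_A}) and (\ref{sampling_def_Omega}). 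Condition (\ref{scale_def_compat_H}) follows from the identity
\[
{}^*e_j\;=\;{}^*P\bigl({}^*[a_j,b_j[\bigr)\,{}^*g_{k_j}\;=\;\sum_{\substack{s\in\tilde{S}\\ s\subset{}^*[a_j,b_j[}}\tilde{g}_{k_j}^s,
\]
which holds for every $j\in[K]$ thanks to the overspilled choice of $K$.

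The decisive step is (\ref{sampling_def_approx}). First I would prove that $\operatorname{span}\{e_j:j\in\mathbb{N}\}$ is a core for $A$: since each $H_k$ is $A$-invariant and $H=\overline{\bigoplus_k H_k}$, the problem reduces to graph-norm density inside each $H_k\cap\operatorname{dom}(A)$; truncating by $P([-n,n[)$ (which commutes with $A$ and converges strongly to $\operatorname{id}$ in the graph topology on $\operatorname{dom}(A)$) further reduces it to ordinary $L_2$ density on bounded spectral pieces, where the spectral isomorphism $H_k\cong L_2(\mathbb{R},\mu_{g_k,g_k})$ and density of rational-step functions in $L_2$ of a finite Borel measure finish the argument. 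A direct Riemann-sum estimate then gives
\[
\bigl\|\tilde{A}({}^*e_j)-{}^*(Ae_j)\bigr\|\;\leq\;\frac{1}{N!}\,\|{}^*g_{k_j}\|
\]
for every $j\in[K]$; this is infinitesimal for standard $j$, which incidentally also proves (\ref{scale_def_compat_A}). Combining the core property with $\aleph_1$-saturation (or overspill applied to the internal statement parameterized by $n\in\mathbb{N}$) produces, for each $x\in\operatorname{dom}(A)$, an internal $\tilde{x}=\sum_{j\in[L]}\tilde{a}_j{}^*e_j\in\tilde{H}$ with $L\leq K$ infinite, $\tilde{x}\simeq{}^*x$ and ${}^*A\tilde{x}\simeq{}^*(Ax)$; the per-generator estimate upgrades ${}^*A\tilde{x}$ to $\tilde{A}\tilde{x}$, establishing (\ref{sampling_def_approx}). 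The main obstacle is the core-property verification together with the packaging of the saturation step so that $L\leq K$.

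The remaining scale conditions are bookkeeping. The denominator defining $\tilde{c}_{j_0}$ is infinitely close to the positive real $\sum_{j\in\mathbb{N}}2^{-j}\min(1,\|e_j\|^{-2})\|e_j\|^2$, so every $\tilde{c}_j$ with $j$ standard is nearstandard with strictly positive standard part and $\operatorname{st}(\tilde{c}_j{}^*e_j)=c_je_j\neq 0$, giving (\ref{scale_def_NS}). Density of $(e_j)_{j\in\mathbb{N}}$ in $H$ was arranged during the construction, settling (\ref{scale_def_dense}). Conditions (\ref{scale_def_proba}) and (\ref{scale_def_bias}) are the normalization built into $\tilde{c}_j$ and the infinitesimal-tail estimates recorded immediately before the proposition. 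Finally, (\ref{scale_def_compat_Omega}) is immediate from the very definition of $\tilde{\Psi}$: for every $f=f_k^s\in\tilde{\Omega}$ some $j\in[K]$ witnesses $({}^*e_j,\tilde{g}_k^s)\neq 0$, and $\tilde{c}_j>0$ for all $j\in[K]$.
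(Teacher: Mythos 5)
The structural and bookkeeping portions of your proposal match the paper's proof: the observation that $\tilde{\Psi}$ is ${}^*$finite, the orthogonality of the $\tilde{g}_k^s$, the identity ${}^*e_j=\sum_{s\subset{}^*[a_j,b_j[}\tilde{g}_{k_j}^s$ for $j\in[K]$, the analysis of $\tilde{c}_j$, and the use of the per-generator Riemann-sum estimate $\|\tilde{A}({}^*e_j)-{}^*(Ae_j)\|\lesssim\frac{1}{N!}$ to get Property (\ref{scale_def_compat_A}) are all present in the paper and correct.

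However, your treatment of Property (\ref{sampling_def_approx}) has a genuine gap at the final step. You produce, by overspill/saturation, an internal $\tilde{x}=\sum_{j\in[L]}\tilde{a}_j{}^*e_j$ with $\tilde{x}\simeq{}^*x$ and ${}^*A\tilde{x}\simeq{}^*(Ax)$, and then claim that ``the per-generator estimate upgrades ${}^*A\tilde{x}$ to $\tilde{A}\tilde{x}$.'' But that upgrade amounts to
\begin{align*}
\|\tilde{A}\tilde{x}-{}^*A\tilde{x}\|\;\leq\;\sum_{j\in[L]}|\tilde{a}_j|\,\|\tilde{A}({}^*e_j)-{}^*(Ae_j)\|\;\leq\;\frac{1}{N!}\sum_{j\in[L]}|\tilde{a}_j|\,\|{}^*g_{k_j}\|,
\end{align*}
and nothing in the overspill argument controls the internal coefficients $\tilde{a}_j$ or the norms $\|{}^*g_{k_j}\|$ for nonstandard $j$: the right-hand side can easily fail to be infinitesimal. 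The standard approximating sequence $y_n$ has well-behaved coefficients only for standard $n$; after extending internally and invoking overspill, the coefficients at the selected infinite index are entirely uncontrolled. So ${}^*A\tilde{x}\simeq{}^*(Ax)$ does not by itself yield $\tilde{A}\tilde{x}\simeq{}^*(Ax)$.

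Two standard fixes are available. One is to build the constraint $\|\tilde{A}\tilde{x}-{}^*(Ax)\|<\tfrac{1}{n}$ (with $\tilde{A}$, not ${}^*A$) directly into the saturation family: for each standard $n$ the witness ${}^*y_n$ is a finite standard combination, so the per-generator estimate does give $\tilde{A}{}^*y_n\simeq Ay_n$, and saturation then delivers the right $\tilde{x}$ with no coefficient control needed. The other fix is what the paper actually does: observe that $\operatorname{st}(G(\tilde{A}))$ is a closed linear subspace of $H\times H$, show membership first for the generators $(P([a,b[)g_k,\,AP([a,b[)g_k)$, extend to each $P([-n,n[)(H)$ using boundedness and density there, and finally take graph-norm limits over $n$ for arbitrary $x\in\operatorname{dom}(A)$. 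That route is cleaner because it never requires producing an internal witness with controlled coefficients; it replaces your ``core plus overspill plus upgrade'' package with closedness of a single external set. Your reduction showing $\operatorname{span}\{e_j\}$ is a core for $A$ is itself fine and is implicitly the same decomposition the paper uses, but the way you then pass to an internal witness is where the argument breaks down as written.
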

\begin{proof}
	We first note that $\tilde{\Psi}$ is ${}^*$finite, since $\tilde{\Psi}\subset [\max_{j\in[K]}(k_j)]\times\tilde{S}$. From that and the fact that all $\tilde{g}_k^s$ are orthogonal to each other, we can establish Properties (\ref{sampling_def_H}), (\ref{sampling_def_A}) and~(\ref{sampling_def_Omega}) of Definition~\ref{definition_sampling}.
	
	Furthermore, for $j_0\in\mathbb{N}$, ${}^*e_{j_0}\neq 0$ is standard in $H$ (therefore nearstandard), and we know  $\operatorname{st}(\tilde{c}_{j_0})=\frac{\frac{1}{2^{j_0}}\min(1,\frac{1}{\|e_{j_0}\|^2})}{\sum_{j\in\mathbb{N}}\frac{1}{2^{j}}\min(1,\frac{1}{\|e_{j}\|^2})\|e_j\|^2}>0$. Thus, we know that Property (\ref{scale_def_NS}) of Definition~\ref{definition_scale} holds.
	
	Furthermore, from the fact that $\sum_{j=L}^K \tilde{c}_j\|{}^*e_j\|^2$ is infinitesimal whenever $L$ is infinite, we directly have that both $\sum_{j\in[K]}\tilde{c}_j\|{}^*e_j\|^2$ and $\sum_{j\in\mathbb{N}}\operatorname{st}(\tilde{c_j})\|e_j\|^2$ are equal to $1$, establishing Properties (\ref{scale_def_proba}) and (\ref{scale_def_bias}). Since we already established Property (\ref{scale_def_dense}), we have that $(({}^*e_j)_{j\in[K]},(\tilde{c}_j)_{j\in[K]})$  is a standard biased scale.
	
	We have Property (\ref{scale_def_compat_H}).  Indeed, since ${}^*e_j\in{}^*H_{{}^*k_j}$ and $\{s\in\tilde{S}\;|\;s\subset [{}^*a_j,{}^*b_j[ \}$ partitions $[{}^*a_j,{}^*b_j[$, we know
	\begin{align*}
		{}^*e_j={}^*P([{}^*a_j, {}^*b_j [){}^*g_{k_j}=\sum_{s\in \tilde{S}\ \text{and}\  s\subset [{}^*a_j,{}^*b_j[}\tilde{g}_{k_j}^s.
	\end{align*}
	
	For any of the terms in the summation, we have $({}^*e_j,\tilde{g}_{k_j}^s)=\|\tilde{g}_{k_j}^s\|^2$. Therefore, for any non-zero $\tilde{g}_{k_j}^s$ in the sum, $(k_j,s)\in\tilde{\Psi}$, and so ${}^*e_j\in\tilde{H}$.
	
	Furthermore, Property (\ref{scale_def_compat_Omega}) holds by definition of $\tilde{\Psi}$.

	And so, what we have left to show to conclude the proof are Properties (\ref{sampling_def_approx}) and (\ref{scale_def_compat_A}), specifically that $G(A)\subset\operatorname{st}(G(\tilde{A}))$ and that for all $j\in \mathbb{N}$, $\tilde{A}{}^*e_j$ is nearstandard. We start with the latter first. 
	
	Let $x=e_j=P([a_j,b_j[)g_{k_j}$ for arbitrary $j\in\mathbb{N}$. We note that $\int_{\mathbb{R}}|\operatorname{id}|^2d\mu_{x,x}=\int_{[a_j,b_j[}|\operatorname{id}|^2d\mu_{x,x}\leq \max(|a_j|,|b_j|)^2\|x\|^2$. And so $x\in\operatorname{dom}(A)$. We show  $Ax=\operatorname{st}(\tilde{A}({}^*x))$. We have:
	\begin{align*}
		{}^*x&=\sum_{(k,s)\in\tilde{\Psi}}({}^*x,f_k^s)f_k^s=\sum_{(k,s)\in\tilde{\Psi}}\frac{1}{\|\tilde{g}_k^s \|^2}({}^*\left(P([a_j,b_j[)g_{k_j}\right),\tilde{g}_k^s)\tilde{g}_k^s=\sum_{\substack{(k,s)\in\tilde{\Psi}\\k=k_j\\s\subset{}^*[a_j,b_j[}}\tilde{g}_{k}^s.
	\end{align*}
	We remind that $(k_j,s)\in\tilde{\Psi}$ is equivalent to $\tilde{g}_{k_j}^s\neq 0$ whenever $s\in\tilde{S}$ and $s\subset{}^*[a_j,b_j[$. Therefore:
	\begin{align*}
		\tilde{A}({}^*x)&=\sum_{\substack{(k,s)\in\tilde{\Psi}\\k=k_j\\s\subset{}^*[a_j,b_j[}}\frac{l_s}{N!}\tilde{g}_{k}^s=\sum_{\substack{s\in\tilde{S}\\ s\subset{}^*[a_j,b_j[ }}\frac{l_s}{N!}\tilde{g}_{k_j}^s\\
		&=\sum_{N!a_j\leq l <N!b_j}\frac{l}{N!}{}^*P([\frac{l}{N!},\frac{l+1}{N!}[){}^*g_{k_j}\\
		&=\sum_{N!a_j\leq l <N!b_j}\frac{l}{N!}{}^*P([\frac{l}{N!},\frac{l+1}{N!}[){}^*x.
	\end{align*}
	
	For $m\in\mathbb{N}$, let $y_m=\sum_{m! a_j\leq l<m! b_j}\frac{l}{m!}P([\frac{l}{m!},\frac{l+1}{m!}[)x$. Then:
	
	\begin{align*}
		y_m&=\left(\int_{\mathbb{R}}\sum_{m!a_j\leq l<m!b_j}\frac{l}{m!}\mathbf{1}_{[\frac{l}{m!},\frac{l+1}{m!}[} dP\right)x
	\end{align*}
and so
	\begin{align*}
	 \|Ax-y_m\|^2&=\left\|\int_{\mathbb{R}}\left(\operatorname{id}-\sum_{m!a_j\leq l<m!b_j}\frac{l}{m!}\mathbf{1}_{[\frac{l}{m!},\frac{l+1}{m!}[}  \right)dP x\right\|^2\\
		&\leq \int_{[a_j,b_j[}\left|\operatorname{id}-\sum_{m!a_j\leq l<m!b_j}\frac{l}{m!}\mathbf{1}_{[\frac{l}{m!},\frac{l+1}{m!}[} \right|^2d\mu_{x,x}.
	\end{align*}
	
	We note that for $t\in[a_j,b_j[$,
	\begin{align*}
		\left|t-\sum_{m!a_j\leq l<m!b_j}\frac{l}{m!}\mathbf{1}_{[\frac{l}{m!},\frac{l+1}{m!}[}(t)\right|&=\left|t-\frac{\lfloor t m!\rfloor}{m!}\right|\leq\frac{1}{m!}\\
		\implies \|Ax-y_m\|^2&\leq\int_{[a_j,b_j[}\frac{1}{m!^2}d\mu_{x,x}=\frac{\|x\|^2}{m!^2}.
	\end{align*}  
	
	Therefore, $Ax=\lim_{m\rightarrow\infty} y_m$. And so, $\tilde{A}{}^*x={}^*y_N\simeq Ax$. Since ${}^*x={}^*e_j$ for an arbitrary $j\in\mathbb{N}$, Property (\ref{scale_def_compat_A}) holds. Also, $\tilde{A}(0)\simeq A(0)$ holds trivially, implying that for any $a,b\in\mathbb{Q}$ and $k\in\mathbb{N}$, $(P([a,b[)g_k, A\left(P([a,b[)g_k\right))\in\operatorname{st}(G(\tilde{A}))$.
	
	To conclude the proof, we use that $\operatorname{st}(G(\tilde{A}))$ is closed in $H\times H$ to show that it contains at least $G(A)$.
	
	Let $n\in\mathbb{N}$. For any $x\in P([-n,n[)(H)$, we have $\int_\mathbb{R}|\operatorname{id}|^2d\mu_{x,x}\leq n^2\|x\|^2$. And so, $A$ is bounded on $P([-n,n[)(H)$. Furthermore, since $(e_j)_{j\in\mathbb{N}}$ is dense-spanning in $H$, $(P([-n,n[) e_j)_{j\in\mathbb{N}}$ is dense-spanning in $P([-n,n[)(H)$. Thus, for any $x\in P([-n,n[)(H)$, $x\in\operatorname{dom}(A)$ and: 
	\begin{align*}
		(x,Ax)&\in\overline{\operatorname{span}(\{ (P([-n,n[) e_j, AP([-n,n[) e_j)\;|\;j\in\mathbb{N} \})}.\\
	\end{align*}
	Furthermore, noting $P([-n,n[) e_j=P([a,b [)g_{k_j}$, where $a=\max(-n,a_j)$ and $b=\min(n,b_j)$, we have
	\begin{align*}
		(x,Ax)&\in \operatorname{st}(G(\tilde{A})). 
	\end{align*} 
	
	Now, if $x\in\operatorname{dom}(A)$ is arbitrary, let $x_n=P([-n,n[)x$ for $n\in\mathbb{N}$. Then, we know $x=\lim_{n\rightarrow\infty}x_n$. Furthermore, if $z_n:=x-x_n$, $\mu_{z_n,z_n}$ is supported on $\mathbb{R}\setminus [-n,n[$, and $\mu_{z_n,z_n}(V)=\mu_{x,x}(V)$ for any $V\subset \mathbb{R}\setminus [-n,n[$. Therefore:
	
	\begin{align*}
		\|Ax-Ax_n\|^2&\leq \int_{\mathbb{R}}|\operatorname{id}|^2d\mu_{z_n,z_n}\\
		&=\int_{\mathbb{R}\setminus[-n,n[}|\operatorname{id}|^2d\mu_{x,x}.
	\end{align*}
	
	That last integral converges to $0$ as $n\rightarrow\infty$ by the monotonic convergence theorem, and so $(x,Ax)=\lim_{n\rightarrow\infty} (x_n,Ax_n)$ in $G(A)$. Therefore, $(x,Ax)\in\operatorname{st}(G(\tilde{A}))$. Since $x\in\operatorname{dom}(A)$ is arbitrary, we have Property (\ref{sampling_def_approx}) and this concludes the proof.
\end{proof}

We now want to show that the resulting $\hat{U}$ is surjective. In most cases, to do that, we need to explicit the description of the objects involved. Specifically, the strategy here is to find an explicit description of $\operatorname{st}\circ(\tilde{U}(\tilde{e}_j))$ on a set of (Loeb) probability $1$. To that end, we reuse the definitions for $\hat{\Omega}'$ and $\Omega_L'$ of \ref{definition_support_total_Uj}.

Let, for $a,b\in{}^*\mathbb{Q}$ and $k\in{}^*\mathbb{N}$:
\begin{align*}
	\tilde{\Omega}_{a,b,k}:&=\{f_k^s\in\tilde{\Omega}\;|\;s\subset [a,b[ \},\\
	\tilde{\chi}_{_{a,b,k}}:&=\mathbf{1}_{\tilde{\Omega}_{a,b,k}}\;\text{ on } \tilde{\Omega}.
\end{align*} 
Finally, for $j\in[K]$, let $\tilde{\chi}^{(j)}=\tilde{\chi}_{_{a_j,b_j,k_j}}$. Then, for any $f_k^s\in\tilde{\Omega}$ and $j\in[K]$:

\begin{align*}
	(f_k^s,{}^*e_j)&=\frac{1}{\|\tilde{g}_k^s\|}({}^*P(s){}^*g_k,{}^*P([{}^*a_j,{}^*b_j[){}^*g_{k_j})\\
	&=\begin{cases}
		\frac{1}{\|\tilde{g}_k^s\|}({}^*P(s){}^*g_k,{}^*P(s){}^*g_k) \qquad \text{if $k={}^*k_j$ and $s\subset [{}^*a_j,{}^*b_j[$}\\
		0\qquad  \text{otherwise}
	\end{cases}\\
	&=\|\tilde{g}_k^s\|\ \tilde{\chi}^{(j)}(f_k^s).
\end{align*}

We then remark that by definition of $\tilde{\Psi}$, for any $f_k^s\in\tilde{\Omega}$, there exists $j\in[K]$ for which $\tilde{\chi}^{(j)}(f_k^s)\neq 0$. Furthermore, we find, for any $f_k^s\in\tilde{\Omega}$:

\begin{align*}
	\tilde{\mu}(f_k^s)&=\sum_{j\in[K]}\tilde{c}_j|(f_k^s,{}^*e_j)|^2\\
	&=\|\tilde{g}_k^s\|^2\sum_{j\in[K]}\tilde{c}_j\tilde{\chi}^{(j)}(f_k^s).
\end{align*}
And so, for any $j_0\in[K]$:
\begin{align*}
	(\tilde{U}({}^*e_{j_0}))(f_k^s)=\frac{({}^*e_{j_0},f_k^s)}{\sqrt{\tilde{\mu}(f_k^s)}}=\frac{\tilde{\chi}^{(j_0)}(f_k^s)}{\sqrt{\sum_{j\in[K]}\tilde{c}_j\tilde{\chi}^{(j)}(f_k^s)}}.
\end{align*}

From Remark~\ref{remark_OmegaL_prime_from_U_L}, we have $$f\in\Omega_L'\implies \exists j\in\mathbb{N} \text{ s.t. } (\tilde{U}(\tilde{e}_j))(f)\neq 0\implies\exists j\in\mathbb{N} \text{ s.t. } f\in\tilde{\Omega}_{a_j, b_j, k_j}.$$ Conversely, since $0< \sum_{j\in[K]} \tilde{c}_j\tilde{\chi}^{(j)}(f)\leq\sum_{j\in[K]} \tilde{c}_j\simeq \sum_{j\in\mathbb{N}}c_j\in\mathbb{R}_{>0}$, we have that if $f\in\tilde{\Omega}_{a_j, b_j, k_j}$ for $j\in\mathbb{N}$, then $\operatorname{st}((\tilde{U}(\tilde{e}_j))(f))>0$, thus $f\in\Omega_L'$. We deduce $\Omega_L'= \bigcup_{j\in\mathbb{N}} \tilde{\Omega}_{a_j, b_j, k_j}$.

By Lemma~\ref{lemma_full_measure_Uj}, we know that $\mu_L(\Omega_L')=1$. Furthermore, if $f_k^s\in\Omega_L'$, we know $\sum_{j=L}^{K}\tilde{c}_j\tilde{\chi}^{(j)}(f_k^s)\leq\sum_{j=L}^{K}\tilde{c}_j$, which is infinitesimal for any infinite $L$. Thus, we have that  $\sum_{j\in[K]}\tilde{c}_j\tilde{\chi}^{(j)}(f_k^s)\simeq \sum_{j\in\mathbb{N}}c_j\tilde{\chi}^{(j)}(f_k^s)$.  Since there exists  $j\in\mathbb{N}$ for which $\tilde{\chi}^{(j)}(f_k^s)\neq 0$, we have, for any $j_0\in\mathbb{N}$ and $f_k^s\in\Omega_L'$:

\begin{align*}
	\operatorname{st}\left((\tilde{U}(e_{j_0}))(f_k^s)\right)=\frac{\tilde{\chi}^{(j_0)}(f_k^s)}{\sqrt{\sum_{j\in\mathbb{N}}c_j\tilde{\chi}^{(j)}(f_k^s)}}.
\end{align*}

From that we have $\tilde{\Omega}_{a_j, b_j, k_j}=\left(\operatorname{st}\circ (\tilde{U}(\tilde{e}_j))\right)^{-1}(\mathbb{K}\setminus\{0\})$. We now define $\hat{\Omega}_{a,b,k}=\hat{\nu}(\tilde{\Omega}_{a,b,k})$ for $a,b\in\mathbb{Q}$ and $k\in\mathbb{N}$. We can now show the following.

\begin{prop}
	$\{\hat{\Omega}_{a,b,k}\;|\;a,b\in\mathbb{Q}, k\in\mathbb{N} \}$ forms an open basis of the topology of $\hat{\Omega}'$ induced by $\hat{d}$.
\end{prop}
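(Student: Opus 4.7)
Plan:

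The plan is to verify three ingredients of a topological basis in sequence: each $\hat{\Omega}_{a,b,k}$ is open in $\hat{\Omega}'$; the collection covers $\hat{\Omega}'$; and for each $\hat{x}_0 \in \hat{\Omega}'$ and each $\varepsilon > 0$, some member contains $\hat{x}_0$ and sits inside $\hat{B}_\varepsilon(\hat{x}_0)$. The first two I obtain directly from the preceding computations: when $P([a,b[)g_k = 0$, the set $\hat{\Omega}_{a,b,k}$ is empty, and otherwise $P([a,b[)g_k = e_{j_0}$ for a unique $j_0 \in \mathbb{N}$ with $(a_{j_0},b_{j_0},k_{j_0}) = (a,b,k)$, whence the identity $\tilde{\Omega}_{a_j,b_j,k_j} = (\operatorname{st}\circ\tilde{U}(\tilde{e}_j))^{-1}(\mathbb{K}\setminus\{0\})$ derived in the excerpt gives $\hat{\Omega}_{a,b,k} = \hat{U}_{j_0}^{-1}(\mathbb{K}\setminus\{0\})$, an open subset of $\hat{\Omega}'$ by continuity of $\hat{U}_{j_0}$ (Proposition~\ref{proposition_existence_Uj}). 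Covering is then $\hat{\Omega}' = \bigcup_j \hat{\Omega}_{a_j,b_j,k_j}$, obtained by applying $\hat{\nu}$ to $\Omega_L' = \bigcup_j\tilde{\Omega}_{a_j,b_j,k_j}$.

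For the basis property, fix $\hat{x}_0$ and $\varepsilon$, and use Remark~\ref{remark_OmegaL_prime_from_U_L} to choose a representative $f_0 = f_{k_0}^{s_0}$ with $k_0 \in \mathbb{N}$. Set $t_0 = l_{s_0}/N!$ and $\tau_0 = \operatorname{st}(t_0)$. My guiding idea is that rational intervals $[a,b[$ shrinking around $\tau_0$ (with $s_0 \subset {}^*[a,b[$) produce shrinking $\hat{\Omega}_{a,b,k_0}$ around $\hat{x}_0$. I will quantify this by truncating the series for $\hat{d}$ at some large $n_0$: the tail is bounded by $\sum_{j>n_0} 2c_j^{3/2}\|e_j\|^2\max|\hat{U}_j| \leq 2\sum_{j>n_0}c_j\|e_j\|^2$, which tends to $0$, and in the finite part I want every $j \leq n_0$ with $k_j = k_0$ to have $[a,b[$ entirely inside or outside $[a_j,b_j[$, matching the value of $\tilde{\chi}^{(j)}(f_0)$.

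This is routine when $\tau_0 \notin \{a_j, b_j\}$, and the hard part will be the boundary indices (those $j \leq n_0$ with $k_j = k_0$ and $\tau_0 \in \{a_j, b_j\}$, which can occur only when $\tau_0 \in \mathbb{Q}$). I will resolve them uniformly by the sign of $t_0 - \tau_0$ in ${}^*\mathbb{R}$: if $t_0 \geq \tau_0$ I take $a = \tau_0$, which simultaneously forces $[a,b[\subset [a_j,b_j[$ for every boundary $j$ with $a_j = \tau_0$ and $[a,b[\cap [a_j,b_j[=\emptyset$ for every boundary $j$ with $b_j = \tau_0$; symmetrically, $b = \tau_0$ handles both kinds of constraints when $t_0 < \tau_0$.

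With $(a,b,k_0)$ so chosen (and $[a,b[\subset [a_{j_0},b_{j_0}[$ for a fixed $j_0$ realizing $\hat{x}_0 \in \hat{\Omega}_{a_{j_0},b_{j_0},k_0}$), any $\hat{y} \in \hat{\Omega}_{a,b,k_0}$ has a representative $f_{k_0}^{s'}$ with $\tilde{\chi}^{(j)}(f_{k_0}^{s'}) = \tilde{\chi}^{(j)}(f_0)$ for all $j \leq n_0$. The finite part of the series for $\hat{d}(\hat{x}_0,\hat{y})$ then collapses to $\sum_{j \leq n_0,\ \tilde{\chi}^{(j)}(f_0)=1} c_j^{3/2}\|e_j\|^2\,\bigl|1/\sqrt{D_0} - 1/\sqrt{D'}\bigr|$, where $D_0 = \sum_j c_j \tilde{\chi}^{(j)}(f_0)$ and $D' = \sum_j c_j \tilde{\chi}^{(j)}(f_{k_0}^{s'})$ agree on $j \leq n_0$, so $|D_0 - D'| \leq \sum_{j>n_0}c_j$, while $D_0, D' \geq c_{j_0} > 0$. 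Using $\sum_j c_j < \infty$ (which holds because $c_j \leq C/2^j$ in this section's scale), both the tail and the finite part go to $0$ as $n_0 \to \infty$, producing $\hat{\Omega}_{a,b,k_0} \subset \hat{B}_\varepsilon(\hat{x}_0)$ for $n_0$ and the associated $[a,b[$ chosen sufficiently fine.
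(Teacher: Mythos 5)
Your proof is correct and follows the same structure as the paper's: openness and covering come from the identity $\hat{\Omega}_{a,b,k}=\hat{U}_j^{-1}(\mathbb{K}\setminus\{0\})$ and continuity of $\hat{U}_j$ (Proposition~\ref{proposition_existence_Uj}), and the basis property comes from truncating the series for $\hat{d}$ at level $n_0$, bounding the tail by $2\sum_{j>n_0}c_j\|e_j\|^2$, and matching $\tilde{\chi}^{(j)}$ on the finite part so that the remaining terms are controlled by $|1/\sqrt{D_0}-1/\sqrt{D'}|$ with $D_0,D'\geq c_{j_0}>0$. The one place you diverge is in producing the rational interval $[a,b[$ that contains $s_0$ and on which every $\tilde{\chi}^{(j)}$ with $j\leq n_0$ and $k_j=k_0$ is constant over $\tilde{\Omega}_{a,b,k_0}$. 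The paper forms the intersection $I$ of the relevant intervals $[a_j,b_j[$ or their complements, notes that $I$ is a finite union of half-open rational intervals, and extracts one containing $s_0$; you instead break the ties at the boundary indices (those $j$ with $\tau_0\in\{a_j,b_j\}$, which can occur only when $\tau_0\in\mathbb{Q}$) by the sign of $t_0-\tau_0$, taking $a=\tau_0$ when $t_0\geq\tau_0$ and $b=\tau_0$ otherwise. Both routes are valid and of comparable length; the paper's interval-arithmetic observation handles all cases uniformly without ever singling out boundary indices, while your sign argument makes explicit why half-open intervals break ties consistently on both sides of every rational endpoint, which is exactly the structural fact that makes the paper's set $I$ have the right form. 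You also correctly flag that $\sum_j c_j<\infty$ (needed for $|D_0-D'|\to 0$) is a feature of this section's scale rather than of Definition~\ref{definition_scale} in general; the paper uses the same fact in the form $\sum_{j\in\mathbb{N}\setminus[m]}c_j<\delta$.
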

\begin{proof}
	First we show that $\hat{\Omega}_{a,b,k}$ is open for any $a,b\in\mathbb{Q}$ and $k\in\mathbb{N}$. 
	
	We note that for such $a,b$ and $k$,  $\tilde{\Omega}_{a,b,k}=\{f\in\tilde{\Omega}\;|\;({}^*(P([a,b[)g_k),f)>0 \}$. Thus, whenever $\hat{\Omega}_{a,b,k}$ is non-empty, we know $P([a,b[)g_k\neq 0$, and so there exists $j\in\mathbb{N}$ such that $P([a,b[)g_k=e_j=P([a_j,b_j[)g_{k_j}\implies \tilde{\Omega}_{a,b,k}=\tilde{\Omega}_{a_j,b_j,k_j}$. Then, since $\hat{\nu}$ is surjective:
	\begin{align*}
		\hat{\Omega}_{a,b,k}&=\hat{\nu}(\tilde{\Omega}_{a_j,b_j,k_j})=\hat{\nu}\left(\left(\operatorname{st}\circ(\tilde{U}(\tilde{e}_{j}))\right)^{-1}(\mathbb{K}\setminus\{0\})\right)\\
		&=\hat{\nu}\left(\hat{\nu}^{-1}(\hat{U}_j^{-1}(\mathbb{K}\setminus\{0\}))\right)=\hat{U}_j^{-1}(\mathbb{K}\setminus\{0\}).
	\end{align*}
	Here, $\hat{U}_j$ is defined per Remark~\ref{remark_definition_Uj}. Since $\hat{U}_j$ is continuous for any standard $j$, we know $\hat{\Omega}_{a,b,k}$ is open.
	
	Now, we show that for any $f_k^s\in\Omega_L'$ and any $\epsilon\in\mathbb{R}_{>0}$, there exists $a,b\in\mathbb{Q}$ such that $\hat{\nu}(f_k^s)\in\hat{\Omega}_{a,b,k}\subset \hat{B}_{\epsilon}(\hat{\nu}(f_k^s))$. First, let standard $\delta\in\mathbb{R}_{>0}$ such that for any $t\in\mathbb{R}$ for which $|t-\sum_{j\in\mathbb{N}}c_j\tilde{\chi}_j(f_k^s)|<\delta$, we get $t>0$ and $|\frac{1}{t}-\frac{1}{\sum_{j\in\mathbb{N}}c_j\tilde{\chi}_j(f_k^s)}|<\epsilon\frac{\min(1,\|e_1\|)}{2}$. Then, let $m\in\mathbb{N}$ such that:
	\begin{itemize}
		\item $\sum_{j\in\mathbb{N}\setminus[m]}c_j\|e_j\|^2<\frac{\epsilon}{4}$.
		\item $\sum_{j\in\mathbb{N}\setminus[m]}c_j<\delta$.
	\end{itemize}
	We note, that there must be $j\in[m]$ for which $\tilde{\chi}^{(j)}(f_k^s)=1$, as $\sum_{j\in\mathbb{N}}c_j\tilde{\chi}^{(j)}(f_k^s)\geq\delta$. Then, for any $f\in\Omega_L$ such that $\tilde{\chi}^{(j)}(f)=\tilde{\chi}^{(j)}(f_k^s)$ whenever $j\in[m]$, we have $f\in\Omega_L'$, and:
	
	\begin{align*}
		\hat{d}(\hat{\nu}(f),\hat{\nu}(f_k^s))&=\sum_{j\in\mathbb{N}}c_j^\frac{3}{2}\|e_j\|^2\left|\operatorname{st}\left((\tilde{U}(\tilde{e}_j))(f)\right)-\operatorname{st}\left((\tilde{U}(\tilde{e}_j))(f_k^s)\right) \right|\\
		&\leq\sum_{j\in[m]}c_j^\frac{3}{2}\|e_j\|^2\left|\frac{\tilde{\chi}^{(j)}(f)}{\sum_{l\in\mathbb{N}}c_l\tilde{\chi}^{(l)}(f)}-\frac{\tilde{\chi}^{(j)}(f_k^s)}{\sum_{l\in\mathbb{N}}c_l\tilde{\chi}^{(l)}(f_k^s)}\right|\\ 
		&\qquad +\sum_{j\in\mathbb{N}\setminus[m]}2c_j^\frac{3}{2}\|e_j\|^2\frac{1}{\sqrt{c_j}}\\
		&<\frac{\epsilon}{2}+ \left|\frac{1}{\sum_{j\in\mathbb{N}}c_j\tilde{\chi}^{(j)}(f)}-\frac{1}{\sum_{j\in\mathbb{N}}c_j\tilde{\chi}^{(j)}(f_k^s)}\right|\sum_{j\in[m]}c_j^{\frac{3}{2}}\|e_j\|^2\tilde{\chi}^{(j)}(f_k^s)\\
		&\leq\frac{\epsilon}{2}+\frac{\epsilon}{2}\cdot\min(1,\|e_1\|)\sum_{j\in[m]}c_j\|e_j\|^2\sqrt{c_j}<\epsilon.
	\end{align*}
	
	Of note, to show why the second-to-last inequality holds, we remark the inequalities $|\sum_{j\in\mathbb{N}}c_j\tilde{\chi}^{(j)}(f)-\sum_{j\in\mathbb{N}}c_j\tilde{\chi}^{(j)}(f_k^s)|\leq\sum_{j\in\mathbb{N}\setminus[m]}c_j<\delta$. For the last one, we remark $c_j\leq\frac{1}{\min(1,\|e_1\|^2)}$. Now, all we need to do is find $a,b\in\mathbb{Q}$ such that $f_k^s\in \tilde{\Omega}_{a,b,k}$ and $\forall f\in \tilde{\Omega}_{a,b,k}$, $\forall j\in[m]$, $\tilde{\chi}^{(j)}(f)=\tilde{\chi}^{(j)}(f_k^s)$.
	
	Let $I=\bigcap_{\substack{j\in[m]\\ k_j=k}}\left(\mathbf{1}_{\mathbb{R}\setminus[a_j,b_j[}+\tilde{\chi}^{(j)}(f_k^s)(\mathbf{1}_{[a_j,b_j[}-\mathbf{1}_{\mathbb{R}\setminus[a_j,b_j[})\right)^{-1}(\{1\})$, noting each term is $[a_j,b_j[$ or $\mathbb{R}\setminus[a_j,b_j[$ depending if $s\subset{}^*[a_j,b_j[$ or not. One can show inductively that $I$ is a finite union of intervals of the form $[a,b[$ where each endpoint is rational. Therefore, since $s\subset{}^*I$, there is $a,b\in\mathbb{Q}$ such that $[a,b[\subset I$ and $s\subset{}^*[a,b[$.
	
	Thus, $f_k^s\in\tilde{\Omega}_{a,b,k}$, and if $f_{k_2}^{s_2}\in\tilde{\Omega}_{a,b,k}$, then $k_2=k$ and $s_2\subset{}^*[a,b[$. Therefore, $s_2\subset{}^*I$, so for any $j\in [m]$ such that $k_j=k$, $s_2\subset{}^*[a_j,b_j[\iff s\subset{}^*[a_j,b_j[$. In other words,  $\forall j\in[m]$, $\tilde{\chi}^{(j)}(f_k^s)=\tilde{\chi}^{(j)}(f_{k_2}^{s_2})$, and so $\hat{d}(\hat{\nu}(f_{k_2}^{s_2}),\hat{\nu}(f_k^s))<\epsilon$.
	
	We conclude $\hat{\nu}(f_k^s)\in\hat{\Omega}_{a,b,k}=\hat{\nu}(\tilde{\Omega}_{a,b,k})\subset \hat{B}_{\epsilon}(\hat{\nu}(f_k^s))$. Since $f_k^s\in\Omega_L'$ and $\epsilon\in\mathbb{R}_{>0}$ are arbitrary, we have that $\{\hat{\Omega}_{a,b,k}\;|\;a,b\in\mathbb{Q} \text{ and } k\in\mathbb{N} \}$ generate the topology of  $(\hat{\Omega}',\hat{d})$.
\end{proof}

This also means that $\{\hat{\Omega}_{a,b,k}\;|\;a,b\in\mathbb{Q} \text{ and } k\in\mathbb{N} \}$ generate the Borel algebra $\hat{\mathcal{A}}$. And so, if $\hat{\chi}_{_{a,b,k}}:= \mathbf{1}_{\hat{\Omega}_{a,b,k}}$, we have $\{\hat{\chi}_{_{a,b,k}}\;|\;a,b\in\mathbb{Q} \text{ and } k\in\mathbb{N} \}$ is dense-spanning in $\hat{H}$. We note that $\hat{\chi}_{_{a,b,k}}\circ\hat{\nu}=\tilde{\chi}_{_{a,b,k}}$.

We can now proceed with the proof of Theorem~\ref{spectral_theorem}.

\begin{proof}[Proof of Theorem~\ref{spectral_theorem}]
	We show that $\hat{U}$ is surjective by building sufficiently many elements in $\hat{U}(H)$.
	
	Let $x\in H$, and let $a,b\in\mathbb{Q}$, $k\in\mathbb{N}$. Then, for any  $f_l^s\in\tilde{\Omega}$:
	\begin{align*}
		\left(\tilde{U}(P([a,b[)\operatorname{proj}_{H_k}x)\right)(f_l^s)&=\frac{(P([a,b[)\operatorname{proj}_{H_k}x,f_l^s)}{\sqrt{\tilde{\mu}(f_l^s)}}\\
		&=\mathbf{1}_{s\subset{}^*[a,b[}\mathbf{1}_{k=l}\frac{(x,f_l^s)}{\sqrt{\tilde{\mu}(f_l^s)}}\\
		&=\tilde{\chi}_{_{a,b,k}}(f_l^s)(\tilde{U}(x))(f_l^s).
	\end{align*}
	
	Thus, $\tilde{U}\left(P([a,b[)\operatorname{proj}_{H_k}x \right)=\tilde{\chi}_{_{a,b,k}}\cdot\tilde{U}(x)$. As all involved functions are $\mu_L$ almost-everywhere nearstandard, we have $U_L\left(P([a,b[)\operatorname{proj}_{H_k}x\right)=\tilde{\chi}_{_{a,b,k}}\cdot U_L(x)$. Therefore, since all involved elements have a pushforward through $\hat{\nu}$, we get
	
	\begin{align*}
		\hat{U}\left(P([a,b[)\operatorname{proj}_{H_k}x\right)&=\hat{\chi}_{_{a,b,k}}\cdot\hat{U}(x).
	\end{align*}

	Consequently, for any $x\in H$, $\hat{\chi}_{_{a,b,k}}\cdot\hat{U}(x)\in\hat{U}(H)$. To conclude the proof, we show that $1\in \hat{U}(H)$. It is indeed sufficient, as that implies $\hat{\chi}_{_{a,b,k}}\in \hat{U}(H)$ for any $a,b\in\mathbb{Q}$, $k\in\mathbb{N}$, and those span a dense subset of $\hat{H}$. 
	
	We have that for any $j\in\mathbb{N}$ and any Borel set $B$ of $\hat{\Omega}$, $\mathbf{1}_B\cdot\hat{U}_j\in\hat{U}(H)$. Indeed, $\hat{U}_j=\hat{U}(e_j)\in \hat{U}(H)$ is bounded, and $\mathbf{1}_B$ is an $L_2$-limit point of the set  $\operatorname{span}(\{\hat{\chi}_{_{a,b,k}}\;|\;a,b\in\mathbb{Q}\text{ and } k\in\mathbb{N}\})$. 
	
	By inspection, we know that for each $j\in\mathbb{N}$, $\hat{U}_j(\hat{\Omega})\subset [0,\frac{1}{c_j}]$. With this, we can consider $X=\sum_{j\in\mathbb{N}}\frac{c_j}{2^j}\hat{U}_j:\hat{\Omega}\rightarrow\mathbb{K}$. Then $X(\hat{\Omega})\subset [0,1]$, and $X(f)=0$ only holds for  $f\notin\hat{\Omega}'$, and so $X$ is almost-everywhere non-zero. We also have that for every Borel set $B\in\hat{\mathcal{A}}$, $\mathbf{1}_B\cdot X=\sum_{j\in\mathbb{N}}\frac{c_j}{2^j} \mathbf{1}_B\cdot \hat{U}_j\in \hat{U}(H)$. Then, for $n\in\mathbb{N}$, we can consider:
	\begin{align*}
		X_n=\sum_{k=1}^n\frac{n}{k}\mathbf{1}_{X^{-1}(]\frac{k-1}{n},\frac{k}{n}])}X. 
	\end{align*}
	For each $n\in\mathbb{N}$, we have $X_n\in \hat{U}(H)$. Furthermore, for every $t\in X^{-1}(]0,1])$, we have $X_n(t)=\frac{nX(t)}{\lceil nX(t)\rceil}$. Therefore, for such $t$:
	\begin{align*}
		|1-X_n(t)|=\frac{\lceil nX(t)\rceil-nX(t)}{\lceil nX(t)\rceil}\leq\frac{1}{\lceil nX(t)\rceil}\leq \frac{1}{nX(t)}.
	\end{align*}
	That implies that for every $t\in X^{-1}(]0,1])$, $1=\lim_{n\rightarrow\infty}X_n(t)$. We also have $|X_n|\leq 1$ everywhere. By the dominated convergence theorem, we get $1=\lim_{n\rightarrow\infty} X_n$ in $\hat{H}$. Thus, $1\in \hat{U}(H)$, implying $\hat{U}$ is surjective, as discussed previously.
\end{proof}

\begin{remark}
	Upon inspection, the induced space $(\hat{\Omega}',\hat{d})$ is homeomorphic to $(\mathbb{N}\times\mathbb{R},d)$, where $d(x,y)=\sum_{j\in\mathbb{N}}\frac{1}{2^j}\left|\mathbf{1}_{\{k_j\}\times[a_j,b_j[}(x)-\mathbf{1}_{\{k_j\}\times[a_j,b_j[}(y)\right|$. 
\end{remark}

We have proven the multiplicative version of the spectral theorem as well.

\begin{theorem}[Spectral theorem for self-adjoint operators]
	If $H$ is a separable Hilbert space on $\mathbb{K}$, and $A:\operatorname{dom}(A)\rightarrow H$ is a densely-defined self-adjoint operator on $H$, then there exists a probability space $(\Omega,\mathcal{A},\mu)$, a unitary operator $U:H\rightarrow L_2(\Omega,\mathcal{A},\mu)$ and a real multiplication operator $T$ on $L_2(\Omega,\mathcal{A},\mu)$ such that $U\circ A=T\circ U$.
\end{theorem}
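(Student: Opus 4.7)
The plan is to package Theorems~\ref{theorem_spectral_symmetric} and~\ref{spectral_theorem} together. First I would invoke Theorem~\ref{spectral_theorem} to produce a sampling $(\tilde{H},\tilde{A},\tilde{\Omega})$ for $A$ together with a compatible standard-biased scale whose induced isometry $\hat{U}:H\to\hat{H}$ is surjective, and then simply declare $(\Omega,\mathcal{A},\mu):=(\hat{\Omega},\hat{\mathcal{A}},\hat{\mu})$, $U:=\hat{U}$, and $T:=\hat{T}$. By construction $(\hat{\Omega},\hat{\mathcal{A}},\hat{\mu})$ is a probability space, $\hat{T}$ is the real multiplication operator attached to the $\hat{\mu}$-almost-everywhere real-valued function $\hat{m}$, and a surjective linear isometry between Hilbert spaces is automatically unitary, so $U$ has the claimed type.

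What remains is to upgrade the inclusion $\hat{U}\circ A\leq \hat{T}\circ\hat{U}$ provided by Theorem~\ref{theorem_spectral_symmetric} into an actual equality of operators. The natural move is to consider $B:=\hat{U}^{-1}\circ \hat{T}\circ\hat{U}$ on $H$; since $\hat{U}$ is unitary and $\hat{T}$ is self-adjoint, $B$ is a densely-defined self-adjoint operator on $H$. The extension inclusion translates exactly to $A\leq B$, so $B$ is a symmetric extension of the self-adjoint $A$, which forces $A=B$ by the maximality of self-adjoint operators among symmetric ones. Rearranging yields $U\circ A=T\circ U$, as desired.

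The main obstacle of the whole argument is not in this final step but in the already-established Theorem~\ref{spectral_theorem}, where one must craft the sampling directly from the spectral resolution of $A$ so that the hull construction recovers every direction of $H$. Once that surjectivity is granted, this multiplicative form of the spectral theorem is essentially a formal corollary of Theorem~\ref{theorem_spectral_symmetric}, combined with the elementary fact that a self-adjoint operator admits no proper symmetric extension.
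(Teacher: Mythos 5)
Your proposal is correct and follows the same route as the paper: invoke Theorem~\ref{spectral_theorem} to get a surjective $\hat{U}$, then upgrade the inclusion $\hat{U}\circ A\leq\hat{T}\circ\hat{U}$ to equality. The paper packages this last step as Proposition~\ref{sa_reduction} (showing $U^{-1}(\operatorname{dom}(T))\subset\operatorname{dom}(A)$ via a bounded-functional argument), whereas you pull back to $B:=\hat{U}^{-1}\circ\hat{T}\circ\hat{U}$ and cite the maximality of self-adjoint operators among symmetric extensions --- the same underlying fact stated more abstractly.
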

\begin{remark}
	We note that $U\circ A=T\circ U$ holds (and not just $U\circ A\leq T\circ U$) as long as $A$ is self-adjoint, as shown in Proposition~\ref{sa_reduction} of the appendix.
\end{remark}

Also, we remark that our proof is not the quickest way of proving this theorem, going from the spectral resolution theorem. If that is the goal, it is quicker to use that $\bigoplus_{k\in\mathbb{N}}L_2(\mathbb{R},\mu_{g_k,g_k})\sim\bigoplus_{k\in\mathbb{N}}H_k$, whose completion is $H$. Theorem~\ref{spectral_theorem} shows not only that in Theorem~\ref{theorem_spectral_symmetric}, $U$ can be surjective  whenever $A$ is self-adjoint, it also shows that surjectivity of $U$ can be achieved by a sufficiently adequate choice of sampling and scale. 

As stated earlier, to ensure the right properties in the resulting space, we had to construct a sampling and a compatible scale that behaved well with the known properties of the operator, in this case one that made use of the spectral resolution. This ties to the next sections. For the rest of the paper, we work with specific examples of symmetric operators. With each example, we carefully craft a sampling and a scale that are well suited to the properties of the operator. In doing so, we find quite rich resulting hull spaces.
\section{Example: Shift operator}\label{section_shift}
The first such example of interest is with $\mathbb{K}=\mathbb{C}$, $H=l_2(\mathbb{Z})$, and $A=\frac{L+R}{2}$ where $L$ and $R$ are the left and right shift operators. $A$ is bounded and $\|A\|\leq 1$ since $L$ and $R$ are isometries. Furthermore, $L$ and $R$ are the adjoint of each other, so $A$ is self-adjoint. The goal is to construct a natural sampling and scale that allows for an explicit description of $\hat{H}$, $\hat{U}$ and $\hat{T}$.

\subsection{Constructing a sampling and a scale} As the title implies, here we first construct an explicit and suitable sampling for $A$ with a specific compatible scale.

First, we consider $\{g_l\}_{l\in\mathbb{Z}}$, the canonical Hilbert basis of $H$ . We will also enumerate it with $(e_j)_{j\in\mathbb{N}}$, where $(e_1,e_2,e_3,e_4,e_5,\dots)=(g_0,g_1,g_{-1},g_2,g_{-2},\dots)$.

Let $N\in{}^*\mathbb{N}\setminus\mathbb{N}$ be an infinite odd number, and let $M=\frac{N-1}{2}$. We consider $\tilde{H}={}^*\operatorname{span}(\{{}^*e_j\}_{j=1}^N)$. We note that  
\begin{align*}
	\{{}^*e_j\}_{j=1}^N=\{{}^*g_l\;|\;l\in[-M..M]\}.
\end{align*}

Here, $\tilde{A}$ will be defined to take full advantage of the flexibility offered by Definition~\ref{definition_sampling}. Let $\tilde{R}$ and $\tilde{L}:\tilde{H}\rightarrow \tilde{H}$ be the right-shift and left-shift operators modulo $N$, so that $\tilde{R}({}^*g_l)={}^*g_{l\oplus 1}$, $\tilde{L}({}^*g_l)={}^*g_{l\ominus1}$, where $\oplus$ and $\ominus$ are the addition and subtraction modulo $N$ (i.e. $l\oplus1\equiv l+1$ mod $N$ and $|l\oplus1|\leq M$, similarly for $l\ominus 1$). Then, let $\tilde{A}=\frac{\tilde{L}+\tilde{R}}{2}$.

Furthermore, let $\tilde{\Omega}=\{f_k\}_{k=1}^N$, with
\begin{align*}
	f_k=\frac{1}{\sqrt{N}}\sum_{l=-M}^{M} e^{-2\pi i\frac{kl}{N}}\ {}^*g_l.
\end{align*}
Finally, for $j\in[N]$, let $\tilde{c}_j=\frac{1}{2^j(1-2^{-N})}$. We now prove:

\begin{prop}
	$(\tilde{H}, \tilde{A}, \tilde{\Omega})$ constitutes a sampling for $A$, with which the pair formed by $(({}^*e_j)_{j\in N},(\tilde{c}_j)_{j\in N})$ is a compatible scale.
\end{prop}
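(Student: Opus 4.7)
The plan is to verify, one by one, the conditions of Definitions~\ref{definition_sampling} and~\ref{definition_scale}, together with the three compatibility conditions, concentrating on condition~(\ref{sampling_def_approx}), which carries essentially all the nontrivial content.

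Conditions (\ref{sampling_def_H}) and (\ref{sampling_def_A}) are immediate: $\tilde{H}$ is the internal span of the hyperfinite set $\{{}^*g_l \mid -M\leq l\leq M\}$, and $\tilde{R},\tilde{L}$ are mutually inverse cyclic permutations of that basis, so each is a ${}^*$unitary with the other as its adjoint, making $\tilde{A}$ symmetric. For condition (\ref{sampling_def_Omega}), I would perform the standard discrete-Fourier change of variable $l\mapsto l\oplus 1$ in the defining sum of $f_k$ to obtain $\tilde{R}f_k = e^{2\pi i k/N}f_k$ and $\tilde{L}f_k = e^{-2\pi i k/N}f_k$, so that $\tilde{A}f_k = \cos(2\pi k/N)\,f_k$; orthonormality of $(f_k)_{k=1}^N$ follows by transfer of the usual discrete Plancherel identity.

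The heart of the proof is condition (\ref{sampling_def_approx}). Given $x\in H$, my candidate is $\tilde{x}:=\sum_{l=-M}^{M}({}^*x,{}^*g_l)\,{}^*g_l = \operatorname{proj}_{\tilde{H}}({}^*x)$. Since $x\in l_2(\mathbb{Z})$, its coordinate tails vanish, which on transfer shows $\|{}^*x-\tilde{x}\|$ is infinitesimal, hence $\operatorname{st}(\tilde{x})=x$. For the image, $\tilde{A}$ agrees with the (non-cyclic) average of bilateral shifts at every internal index in $[-M+1..M-1]$, so the difference $\tilde{A}\tilde{x}-\operatorname{proj}_{\tilde{H}}({}^*(Ax))$ is supported at the two boundary indices $\pm M$ and involves only coefficients $({}^*x,{}^*g_l)$ with $|l|\geq M$; each such coefficient is the value of a standard $l_2$ sequence at an infinite index, hence infinitesimal. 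Combined with $\operatorname{proj}_{\tilde{H}}({}^*(Ax))\simeq{}^*(Ax)$ (the same tail estimate applied to $Ax$), this yields $\operatorname{st}(\tilde{A}\tilde{x})=Ax$. This is the step I expect to be the main obstacle, since it demands careful bookkeeping of the wrap-around defect alongside the two separate truncation errors, making sure all are controlled simultaneously by the $l_2$ tail of $x$ and of $Ax$.

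For the scale, conditions (\ref{scale_def_NS})--(\ref{scale_def_bias}) reduce to a geometric-series computation: $\tilde{c}_j\simeq 2^{-j}$ for standard $j$, the normalization $\sum_{j=1}^N\tilde{c}_j\|{}^*e_j\|^2 = \frac{1}{1-2^{-N}}\sum_{j=1}^N 2^{-j}=1$ gives (\ref{scale_def_proba}), and the tail of a geometric series beyond any infinite $K$ is infinitesimal, giving (\ref{scale_def_bias}); density (\ref{scale_def_dense}) holds because $(g_l)_{l\in\mathbb{Z}}$ is a Hilbert basis of $H$. The compatibility conditions are equally clean: (\ref{scale_def_compat_H}) is built into the construction; for (\ref{scale_def_compat_A}), when $j$ is standard the corresponding $l$ is also standard, so $l\pm 1\in[-M..M]$ without wrap and $\tilde{A}({}^*g_l)={}^*(Ag_l)$ is standard; finally, (\ref{scale_def_compat_Omega}) follows from $({}^*e_1,f_k)=({}^*g_0,f_k)=1/\sqrt{N}\neq 0$ together with $\tilde{c}_1>0$.
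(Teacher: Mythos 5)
Your proof is correct and follows essentially the same route as the paper: verify each condition, with the only substantive work in condition~(\ref{sampling_def_approx}), where $\tilde{x}=\operatorname{proj}_{\tilde{H}}{}^*x$ and the wrap-around defect of $\tilde{A}$ is localized at $\pm M$ and controlled by $l_2$-tail estimates. Your splitting $\tilde{A}\tilde{x}-{}^*(Ax)=\bigl(\tilde{A}\tilde{x}-\operatorname{proj}_{\tilde{H}}{}^*(Ax)\bigr)+\bigl(\operatorname{proj}_{\tilde{H}}{}^*(Ax)-{}^*(Ax)\bigr)$ differs cosmetically from the paper's $\|(\tilde{A}-{}^*A)\tilde{x}\|+\|{}^*A(\tilde{x}-{}^*x)\|$, but both isolate the same boundary coefficients and both rest on the same observation.
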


\begin{proof}
	By definition, $\tilde{H}$ is an internal subspace of ${}^*H$ generated by a ${}^*$finite set, so Property (\ref{sampling_def_H}) of Definition~\ref{definition_sampling} holds. We have that $\tilde{R}$ and $\tilde{L}$ are internal linear isometries on $\tilde{H}$ and inverses of each other, and so they are the adjoint of each other. Thus, Property (\ref{sampling_def_A}) holds. 
	
	By definition of $\tilde{H}$, $f_k\in\tilde{H}$ for each $k\in[N]$. Using the internal definition principle, we get that $\tilde{\Omega}$ is an internal subset of $\tilde{H}$. Furthermore, since the set $\{{}^*g_l\;|\;-M\leq l\leq M \}$ is an orthonormal ${}^*$basis of $\tilde{H}$, we have, for any $k_1, k_2\in[N]$:
	
	\begin{align*}
		(f_{k_1},f_{k_2})&=\sum_{l=-M}^{M}\left(\frac{1}{\sqrt{N}}e^{-2\pi i\frac{k_1l}{N}}\right)\left(\frac{1}{\sqrt{N}}e^{2\pi i\frac{k_2l}{N}}\right)=\frac{1}{N}\sum_{l=-M}^{M}(e^{2\pi i\frac{k_2-k_1}{N}})^l\\
		&=\begin{cases}
			\frac{1}{N}\sum_{l=-M}^{M} 1 & \text{if } k_1=k_2\\
			\frac{1}{N}\frac{(e^{2\pi i\frac{k_2-k_1}{N}})^{M+1}-(e^{2\pi i\frac{k_2-k_1}{N}})^{-M}}{e^{2\pi i\frac{k_2-k_1}{N}}-1} & \text{otherwise }  
		\end{cases}\\
		&=\begin{cases}
			1 & \text{if } k_1=k_2\\
			0 & \text{otherwise}.
		\end{cases}
	\end{align*}
	
	Therefore, $\tilde{\Omega}$ is an internal orthonormal set of $\tilde{H}$. Since ${}^*|\tilde{\Omega}|=N={}^*\operatorname{dim}(\tilde{H})$, $\tilde{\Omega}$ is an internal orthonormal basis of $\tilde{H}$. Furthermore, we have
	
	\begin{align*}
		\tilde{R} f_k&=\sum_{l=-M}^M\frac{1}{\sqrt{N}}e^{-2\pi ik\frac{l}{N} }({}^*g_{l\oplus1})=e^{2\pi i\frac{k}{N}}\sum_{l=-M}^M\frac{1}{\sqrt{N}}e^{-2\pi ik\frac{l+1}{N} }({}^*g_{l\oplus1})\\
		&=e^{2\pi i\frac{k}{N}}\sum_{l=-M}^M\frac{1}{\sqrt{N}}e^{-2\pi ik\frac{l\oplus1}{N} }({}^*g_{l\oplus1})=e^{2\pi i\frac{k}{N}}\frac{1}{\sqrt{N}}\sum_{l=-M}^{M} e^{-2\pi i\frac{kl}{N}}({}^*g_l)\\
		&=e^{2\pi i\frac{k}{N}} f_k.
	\end{align*}
	The same way, we find $\tilde{L}f_k=e^{-2\pi i\frac{k}{N}}f_k$. Therefore we have
	\begin{align*}
		\tilde{A}f_k=\frac{e^{2\pi i\frac{k}{N}}+e^{-2\pi i\frac{k}{N}}}{2}f_k=\cos(2\pi k/N)f_k.
	\end{align*} 
	Thus, Property~(\ref{sampling_def_Omega}) holds. We now show Property~(\ref{sampling_def_approx}). 
	
	Let $x\in H$, for which we define $\tilde{x}:=\operatorname{proj}_{\tilde{H}} {}^*x=\sum_{l=-M}^{M}({}^*x,{}^*g_l){}^*g_l$. Since we know $x=\lim_{m\rightarrow\infty}\sum_{l=-m}^m (x,g_l) g_l$, we have $x=\operatorname{st}(\tilde{x})$. 
	
	Just like with $A$, we note $\|\tilde{A}\|\leq 1$.  Furthermore, we note that for any $l\in]-M..M[$, $\tilde{A}{}^*g_l={}^*A{}^*g_l=\frac{g_{l-1}+g_{l+1}}{2}$. Therefore:
	\begin{align*}
		\|\tilde{A} \tilde{x} -{}^*(Ax)\|&\leq \|(\tilde{A}-{}^*A)\tilde{x} \|+\|{}^*A(\tilde{x} -{}^*x)\|\\
		&\leq \|{}^*x-\tilde{x}\|+\|(\tilde{A}-{}^*A)\sum_{l=-M}^{M}({}^*x,{}^*g_l){}^*g_l\|\\ &=\|{}^*x-\tilde{x}\|+\|(\tilde{A}-{}^*A)(({}^*x,{}^*g_{-M}){}^*g_{-M}+({}^*x,{}^*g_{M}){}^*g_{M})\|\\
		&\leq\|{}^*x-\tilde{x}\|+2\|({}^*x,{}^*g_{-M}){}^*g_{-M}+({}^*x,{}^*g_{M}){}^*g_{M}\|\\
		&\leq \|{}^*x-\tilde{x}\|+2(|({}^*x,{}^*g_{-M})|+|({}^*x,{}^*g_{M})|).
	\end{align*}	
	Therefore, $\operatorname{st}(\|\tilde{A} \tilde{x} -{}^*(Ax)\|)=0$, and $(x,Ax)=\operatorname{st}(\tilde{x},\tilde{A}\tilde{x})$ in the graph norm. Since $x\in H$ is arbitrary, $G(A)\subset\operatorname{st}(G(\tilde{A}))$, thus Property~(\ref{sampling_def_approx}) holds and $(\tilde{H}, \tilde{A}, \tilde{\Omega})$ is a sampling for $A$.

	Furthermore, since $(e_j)_{j\in\mathbb{N}}$ is a Hilbert basis of $H$, and $\operatorname{st}(\tilde{c}_j)=\frac{1}{2^j}$ for each standard $j$, Properties (\ref{scale_def_NS}), (\ref{scale_def_dense}) directly hold, while Properties (\ref{scale_def_proba}) and (\ref{scale_def_bias}) are easily verified. By definition, $({}^*e_j)_{j\in[N]}$ is a generating set of $\tilde{H}$, so Properties (\ref{scale_def_compat_H}) and (\ref{scale_def_compat_Omega}) hold. Finally, as established previously, if $j\in\mathbb{N}$, $\tilde{A}\ {}^*e_j={}^*A\ {}^*e_j={}^*(Ae_j)$. Thus, $\tilde{A}\ {}^*e_j$ is standard, and Property~(\ref{scale_def_compat_A}) holds.
	
	Therefore, $(({}^*e_j)_{j\in[N]}, (\tilde{c}_j)_{j\in[N]})$ is a standard-biased scale compatible with the sampling $(\tilde{H},\tilde{A},\tilde{\Omega})$, concluding the proof.
\end{proof}

\begin{remark}
	We have also shown that for $f_k\in\tilde{\Omega}$, $\tilde{\lambda}_{f_k}=\cos(2\pi\frac{k}{N})$.
\end{remark}
\subsection{Nature of the hull space} 
Here, we show that $(\hat{\Omega},\hat{\mathcal{A}},\hat{\mu})$ is in fact the same measure space as $\mathbb{R}/\mathbb{Z}$ up to measure-preserving homeomorphism. 

Since $|(f_k,\tilde{e}_j)|^2=\frac{1}{N}$ is always true, we find that for internal $V\subset \tilde{\Omega}$:
\begin{align*}
	\tilde{\mu}(V)=\frac{1}{1-2^{-N}}\sum_{j=1}^N\frac{1}{2^j}\sum_{f\in V}\frac{1}{N}=\frac{{}^*|V|}{N},
\end{align*} 
where ${}^*|V|\in\mathbb{N}$ is the internal number of elements of $V$. And so $\tilde{\mu}$ is the uniform internal probability measure on $\tilde{\Omega}$. Furthermore, we find that for any standard $l\in\mathbb{Z}$,
\begin{align*}
	(\tilde{U}({}^*g_l))(f_k)=\frac{({}^*g_l,f_k)}{\sqrt{\tilde{\mu}(f_k)}}=\frac{e^{2\pi il\frac{k}{N}}/\sqrt{N}}{1/\sqrt{N}}=e^{2\pi il\frac{k}{N}}.
\end{align*}
Since the exponential is continuous, we find that for any $l\in\mathbb{Z}$ and $f_k\in\tilde{\Omega}$, $\operatorname{st}\left((\tilde{U}({}^*g_l))(f_k)\right)=e^{2\pi i l\operatorname{st}(\frac{k}{N})}$. We can use this to find that $f_{k_1}\sim f_{k_2}\iff e^{2\pi i\operatorname{st}(\frac{k_1}{N})}=e^{2\pi i\operatorname{st}(\frac{k_2}{N})}\iff\operatorname{st}\left(\frac{k_2-k_1}{N}\right)\in\mathbb{Z}\iff \operatorname{st}(\frac{k_1}{N})\equiv\operatorname{st}(\frac{k_2}{N})$ mod $1$.

Quite naturally, this induces $\rho:\hat{\Omega}\rightarrow \mathbb{R}/\mathbb{Z}$ with $\rho(\hat{\nu}(f_k)):=\operatorname{st}(\frac{k}{N})$ mod $1$.

\begin{prop}
	If we consider the probability space $(\mathbb{R}/\mathbb{Z},\operatorname{Borel}(\mathbb{R}/\mathbb{Z}),\mathcal{L})$, so that $\mathbb{R}/\mathbb{Z}$ is equipped with its usual quotient space topology and $\mathcal{L}$ is the Lebesgue measure, then $\rho$ is a measure preserving homeomorphism. 
\end{prop}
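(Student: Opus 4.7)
My plan is to verify the three assertions — that $\rho$ is a well-defined bijection, a homeomorphism, and measure-preserving — in turn, using the explicit formula $\operatorname{st}\left((\tilde{U}({}^*g_l))(f_k)\right) = e^{2\pi i l \operatorname{st}(k/N)}$ derived just above.

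Well-definedness and injectivity of $\rho$ are immediate from the equivalence $f_{k_1}\sim f_{k_2} \iff \operatorname{st}(k_1/N) \equiv \operatorname{st}(k_2/N) \pmod 1$ already extracted in the excerpt. For surjectivity, given standard $t \in [0,1)$, I would take $k = \max(1,\lfloor tN\rfloor) \in [N]$, for which $\operatorname{st}(k/N)=t$ and hence $\rho(\hat\nu(f_k)) = t \bmod 1$.

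For continuity of $\rho$, I would isolate the $j=2$ term in the defining sum for $\tilde{d}$ — the one corresponding to $\tilde{e}_2 = {}^*g_1$ with $\|g_1\|=1$ — to obtain the lower bound
\begin{align*}
\hat{d}(\hat{\nu}(f_{k_1}), \hat{\nu}(f_{k_2})) \geq c_2^{3/2}\bigl|e^{2\pi i \operatorname{st}(k_1/N)} - e^{2\pi i \operatorname{st}(k_2/N)}\bigr|.
\end{align*}
So small $\hat{d}$ forces the two images close on the unit circle, hence close in $\mathbb{R}/\mathbb{Z}$, since $t\mapsto e^{2\pi it}$ is a homeomorphism $\mathbb{R}/\mathbb{Z}\to S^1$. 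This gives (uniform) continuity of $\rho$; and since $\hat{\Omega}$ is compact while $\mathbb{R}/\mathbb{Z}$ is Hausdorff, any continuous bijection between them is automatically a homeomorphism.

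Finally, for measure preservation, I would push forward via $\rho\circ\hat\nu$ and compare on open arcs, which generate $\operatorname{Borel}(\mathbb{R}/\mathbb{Z})$. Since $\tilde\mu$ is uniform with $\tilde\mu(\{f_k\}) = 1/N$, the internal $\tilde\mu$-measure of $\{f_k : a < k/N < b\}$ for standard $0\leq a < b\leq 1$ is $\simeq b-a$, giving $\mu_L$-measure exactly $b-a$, whence $\rho_*\hat\mu((a,b)) = \mathcal{L}((a,b))$; a standard monotone-class argument then extends the equality to all Borel sets. This Loeb-measure pushforward is the main computational content of the proof, but it is routine given the uniformity of $\tilde\mu$ on $\tilde\Omega$; the only genuine care needed anywhere is in minor boundary bookkeeping (namely $k=0\notin[N]$ in the surjectivity step, and arcs wrapping across $0\in\mathbb{R}/\mathbb{Z}$ in the measure computation), neither of which poses a real obstacle.
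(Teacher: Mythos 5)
Your proposal is correct and follows essentially the same route as the paper: injectivity/well-definedness from the equivalence chain, surjectivity from the existence of $k\in[N]$ with $\operatorname{st}(k/N)=t$, continuity via the $j=2$ term of $\tilde{d}$ (which is exactly the Lipschitz estimate for $\hat{U}_2$ from Proposition~\ref{proposition_existence_Uj} that the paper cites, combined with the same compact-to-Hausdorff argument), and measure preservation by the uniform Loeb computation on a generating family of arcs. The only cosmetic difference is that you re-derive the Lipschitz bound inline rather than citing the established continuity of $\hat{U}_2$, and use open arcs rather than the paper's closed intervals $[a,b]$; both are immaterial.
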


\begin{proof}

The previous equivalence chain shows $\rho$ is well-defined and injective. It is also surjective, since for any $t\in [0,1)$, there is $k\in[N]$ such that $\operatorname{st}(\frac{k}{N})=t$, implying $t$ mod $1=\rho(\hat{\nu}(f_k))$. Thus $\rho$ is bijective. We now show that $\rho$ is an homeomorphism.

In fact, since $\hat{\Omega}$ is compact and $\mathbb{R}/\mathbb{Z}$ is Hausdorff, it is sufficient to show that $\rho$ is continuous. First, let $\phi:\mathbb{R}/\mathbb{Z}\rightarrow\mathbb{S}^1$ be the classical homeomorphism with 
\begin{align*}
	\phi(t\text{ mod }1)=e^{2\pi i t}.
\end{align*}
If $\hat{U}_j$ is defined as per Remark~\ref{remark_definition_Uj} for $j\in\mathbb{N}$,  we have $\hat{d}$-continuous $\hat{U}_2$ with $\hat{U}_2\circ\hat{\nu}=\operatorname{st}\circ(\tilde{U}({}^*e_2))$ on $\Omega_L$. Therefore, for any $f_k\in\Omega_L$,
\begin{align*}
	\hat{U}_2(\hat{\nu}(f_k))&=\operatorname{st}((\tilde{U}({}^*e_2))(f_k))=\operatorname{st}((\tilde{U}({}^*g_1))(f_k))=e^{2\pi i\operatorname{st}(\frac{k}{N})}.
\end{align*}
Thus, $\hat{U}_2(\hat{\Omega})\subset\mathbb{S}^1$. Furthermore, we have
\begin{align*}
	\hat{U}_2(\hat{\nu}(f_k))&=\phi(\operatorname{st}(\frac{k}{N})\text{ mod }1)=\phi(\rho(\hat{\nu}(f_k))).
\end{align*}
Therefore,  $\hat{U}_2=\phi\circ\rho$, which implies $\rho=\phi^{-1}\circ \hat{U}_2$. Consequently, $\rho$ is continuous, and a homeomorphism. We get that $\rho$ is also an isomorphism between the $\sigma$-algebras $(\hat{\Omega},\hat{\mathcal{A}})$ and $(\mathbb{R}/\mathbb{Z},\operatorname{Borel}(\mathbb{R}/\mathbb{Z}))$. 

We now prove that $\rho$ is measure preserving, given Lebesgue measure $\mathcal{L}$. Let $a, b$ be two standard real numbers such that $0\leq a\leq b <1$. We consider the set $S=[a,b]$ mod $1$. We have that
\begin{align*}
	(\rho\circ\hat{\nu})^{-1}(S)&=\{f_k\in \tilde{\Omega}\;|\;a\leq\operatorname{st}(\frac{k}{N})\leq b \}\\
	&=\{f_k\in \tilde{\Omega}\;|\;\forall n\in\mathbb{N}\ a-\frac{1}{n}\leq\frac{k}{N}\leq b+\frac{1}{n} \}\\
	&=\bigcap_{n\in\mathbb{N}}\{f_k\in \tilde{\Omega}\;|\; a-\frac{1}{n}\leq\frac{k}{N}\leq b+\frac{1}{n} \}.
\end{align*}
And so,
\begin{align*}
	\hat{\mu}(\rho^{-1}\left(S\right))&=\mu_L((\rho\circ\hat{\nu})^{-1}(S))\\
	&=\lim_{n\rightarrow\infty}\operatorname{st}(\tilde{\mu}(\{f_k\in \tilde{\Omega}\;|\;\ a-\frac{1}{n}\leq\frac{k}{N}\leq b+\frac{1}{n} \}))\\
	&=\lim_{n\rightarrow\infty}\operatorname{st}(\frac{{}^*|\{f_k\in \tilde{\Omega}\;|\; a-\frac{1}{n}\leq\frac{k}{N}\leq b+\frac{1}{n} \}|}{N})\\
	&=\lim_{n\rightarrow\infty}\operatorname{st}(\frac{{}^*|\{k\in[N]\;|\; Na-\frac{N}{n}\leq k\leq Nb+\frac{N}{n} \}|}{N})\\
	&=\lim_{n\rightarrow\infty}\operatorname{st}(\frac{Nb+2\frac{N}{n}-Na+s_n}{N})=\lim_{n\rightarrow\infty}b-a+\frac{2}{n}=b-a\\
	&=\mathcal{L}(S).
\end{align*}
We note $s_n\in\{-1,0,1\}$ so the second-to-last equality holds. Since such $S$ generate the Borel algebra of $\mathbb{R}/\mathbb{Z}$, we find that $\rho$ is measure preserving, concluding the proof.
\end{proof}
\subsection{Relation to the Fourier series} All that is left is to calculate the form $\hat{U}$ and $\hat{T}$ takes in this space.

Thanks to the previous proposition, we have that $\rho$ is a measure space isomorphism between spaces  $(\hat{\Omega},\hat{\mathcal{A}},\hat{\mu})$ and $(\mathbb{R}/\mathbb{Z},\operatorname{Borel}(\mathbb{R}/\mathbb{Z}),\mathcal{L})$. This induces a unitary operator $\Theta:L_2(\mathbb{R}/\mathbb{Z}) \rightarrow L_2(\hat{\Omega})$.

$U=\Theta^{-1}\circ \hat{U}:H\rightarrow L_2(\mathbb{R}/\mathbb{Z})$ is also an isometry. For $l\in\mathbb{Z}$, and $t\in \mathbb{R}$, we have, almost everywhere:
\begin{align*}
	(U(g_l))(t\text{ mod } 1)&=(U_L(g_l))(f_k)=e^{2\pi i l\operatorname{st}(\frac{k}{N})}=e^{2\pi i l t}.
\end{align*}
Here, $f_k$ is such that $\rho(\hat{\nu}(f_k))=t\text{ mod }1$. Thus, $U(g_l)=\exp(2\pi i l\cdot)$. And so, quite interestingly, $U$ associates any sequence of $l_2(\mathbb{Z})$ to its corresponding Fourier series in $L_2(\mathbb{R}/\mathbb{Z})$. The following is then widely known (a proof can be found in \cite[Chapter~4]{rudin1987real}).

\begin{prop}
	Both $U$ and $\hat{U}$ are surjective.
\end{prop}

We find that $T=\Theta^{-1}\circ \hat{T}\circ \Theta$ is also a multiplication operator on $L_2(\mathbb{R}/\mathbb{Z})$ induced by $m=\hat{m}\circ\rho^{-1}$, and that $U\circ A=T\circ U$. We can then calculate that 
\begin{align*}
	m_L(f_k)=\cos(2\pi\operatorname{st}(\frac{k}{N}))=\cos(2\pi\rho(\hat{\nu}(f_k))),
\end{align*}
from which we conclude $m(t)=\cos(2\pi t)$. The fact that $T$ is unitarily equivalent to $A$ is far from surprising; it is widely known. What seems quite intriguing is that the presented process, given that sampling and scale, constructs so naturally and intrinsically this very equivalence.
\section{Example: Differential operator}\label{section_differential}

We consider one of the most important example operators studied in this theory, the differential operator. If $\mu$ is the Lebesgue measure on $\mathbb{R}$, let $H=L_2(\mathbb{R}, \mu)$ (with $\mathbb{K}=\mathbb{C}$), and $A=-i\frac{d}{dx}$ defined on $\operatorname{dom}(A)=C_c^{\infty}(\mathbb{R})$.  

First, we will prove a lemma that will be used multiple times throughout this section.

\begin{lemma}\label{lp-approx}
	Let $p\in\mathbb{R}_{\geq1}$, and let $f:\mathbb{R}\rightarrow\mathbb{C}$ such that:
	\begin{itemize}
		\item $f$ is continuous on $\mathbb{R}$.
		\item $f\in L_p(\mathbb{R})$. 
		\item $|f|$ is decreasing at infinity, in the sense that there exists $m\in\mathbb{N}$ such that for any $x,y\in\mathbb{R}$, $x>y\geq m$ or $x<y\leq -m$ implies $|f(x)|\leq |f(y)|$.
	\end{itemize}
	Furthermore, suppose that $\tilde{f}\in{}^*(L_p(\mathbb{R}))$ can be formulated by
	\begin{align*}
		\tilde{f}=\sum_{-L\leq k\leq M}{}^*f(c_k^{(r)})\mathbf{1}_{[\frac{k}{N},\frac{k+1}{N}[},
	\end{align*} 
	where $L,M,N\in{}^*\mathbb{N}\setminus\mathbb{N}$, $\frac{L}{N}$ and $\frac{M}{N}$ are both infinite, $r\in\mathbb{Z}$ and $c_k^{(r)}\in[\frac{k+r}{N},\frac{k+r+1}{N}]$ for each $k$. 
	
	Then, $\operatorname{st}(\tilde{f})=f$ in $L_p(\mathbb{R})$, and for any $K\in{}^*\mathbb{N}\setminus\mathbb{N}$, $\operatorname{st}(\|\mathbf{1}_{{}^*\mathbb{R}\setminus [-K,K[}\tilde{f}\|_p)=0$.
\end{lemma}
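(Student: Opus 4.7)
The plan is to split $\|\tilde{f} - {}^*f\|_p$ into a bounded-interval piece and two tail pieces: the interior part will be handled by $S$-continuity of ${}^*f$ on standard compacts, while the tails will be dominated by a standard tail integral of $|f|^p$ via the eventual monotonicity of $|f|$. Both asserted conclusions fall out together, the second claim being essentially just the tail estimate applied at an infinite truncation point.

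For the interior, fix a standard $n \in \mathbb{N}$ with $n > m + |r| + 2$. Since $f$ is continuous on the standard compact $[-n-1, n+1]$ it is uniformly continuous there, so ${}^*f$ is $S$-continuous on ${}^*[-n-1, n+1]$. For any $x \in {}^*[-n,n]$ lying in the block $[k/N,(k+1)/N[$, we have $|c_k^{(r)} - x| \leq (|r|+1)/N$, an infinitesimal; hence ${}^*f(c_k^{(r)}) \simeq {}^*f(x)$ uniformly on ${}^*[-n,n]$. This immediately gives $\operatorname{st}\bigl(\|\mathbf{1}_{[-n,n]}(\tilde{f}-{}^*f)\|_p\bigr) = 0$.

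For the tails, I would exploit monotonicity directly. For $k$ with $k/N \geq n$, set $J_k := [(k+r-1)/N, (k+r)/N]$; then $c_k^{(r)} \geq (k+r)/N > m$, every $x \in J_k$ satisfies $m \leq x \leq c_k^{(r)}$, and monotonicity of $|{}^*f|$ past $m$ yields $|{}^*f(c_k^{(r)})|^p \leq |{}^*f(x)|^p$ for all $x \in J_k$, whence $|{}^*f(c_k^{(r)})|^p/N \leq {}^*\!\!\int_{J_k} |f|^p\,dx$. The $J_k$ are pairwise disjoint, so summing over $k$ with $k/N \geq n$ gives
\[
\|\mathbf{1}_{[n,\infty[}\tilde{f}\|_p^p \;\leq\; {}^*\!\!\int_{n-1}^{\infty}|f|^p\,dx.
\]
A mirror-symmetric argument with $J_k := [(k+r+1)/N,(k+r+2)/N]$ bounds the left tail. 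The second claim is now immediate: for any infinite $K$ and any standard $n$ in the permitted range, $\|\mathbf{1}_{{}^*\mathbb{R}\setminus[-K,K[}\tilde{f}\|_p^p$ is at most ${}^*\!\int_{|x|\geq n-1}|f|^p$, whose standard part $\int_{|x|\geq n-1}|f|^p$ tends to $0$ as standard $n \to \infty$ by $L_p$-integrability of $f$.

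For the first claim, given standard $\epsilon > 0$ pick standard $n > m + |r| + 2$ with $\int_{|x|\geq n-1}|f|^p < \epsilon^p$; the triangle inequality
\[
\|\tilde{f}-{}^*f\|_p \;\leq\; \|\mathbf{1}_{[-n,n]}(\tilde{f}-{}^*f)\|_p + \|\mathbf{1}_{|x|>n}\tilde{f}\|_p + \|\mathbf{1}_{|x|>n}{}^*f\|_p
\]
then has right-hand side with standard part at most $0 + \epsilon + \epsilon$. Letting $\epsilon \to 0$ yields $\operatorname{st}(\tilde{f}) = f$ in $L_p$. The main delicacy is the tail comparison: the shift $r$ in the sample points means the dominating intervals $J_k$ must be placed one step away from $c_k^{(r)}$ in the correct direction so as to remain inside the monotonicity region and pairwise disjoint; the interior estimate and the final recombination are then routine.
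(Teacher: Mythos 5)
Your proof is correct and follows essentially the same strategy as the paper's: an interior estimate via uniform continuity of $f$ on a standard compact, a tail estimate obtained by comparing each sampled block to a neighboring block (shifted one step toward the origin) on which $|{}^*f|$ is pointwise at least as large by eventual monotonicity, and a triangle-inequality recombination. The only organizational difference is that you establish the tail bound at a standard cutoff $n$ and deduce the infinite-$K$ statement a fortiori, whereas the paper proves the infinite-$K$ tail estimate first and then uses underspill to extract a standard $n$; both are valid and amount to the same computation.
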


\begin{proof}
	We start with the last part. If $K\in{}^*\mathbb{N}\setminus\mathbb{N}$ and $X={}^*\mathbb{R}\setminus [-K,K[$, we have:
	\begin{align*}
		\|\mathbf{1}_X\tilde{f}\|_p^p&=\sum_{k=-L}^{-NK-1}\left\||{}^*f(c_k^{(r)})|\mathbf{1}_{[\frac{k}{N},\frac{k+1}{N}[}\right\|_p^p+\sum_{k=NK}^{M}\left\||{}^*f(c_k^{(r)})|\mathbf{1}_{[\frac{k}{N},\frac{k+1}{N}[}\right\|_p^p\\
		&=\sum_{k=-L}^{-NK-1}|{}^*f(c_k^{(r)})|^p\frac{1}{N}+\sum_{k=NK}^{M}|{}^*f(c_k^{(r)})|^p\frac{1}{N}\\
		&=\sum_{k=-L}^{-NK-1}\left\||{}^*f(c_k^{(r)})|\mathbf{1}_{_{[\frac{k+r+1}{N},\frac{k+r+2}{N}[}}\right\|_p^p+\sum_{k=NK}^{M}\left\||{}^*f(c_k^{(r)})|\mathbf{1}_{_{[\frac{k+r-1}{N},\frac{k+r}{N}[}}\right\|_p^p.
	\end{align*}
	For any $k$ in the left sum, any $x$ in $[\frac{k+r+1}{N},\frac{k+r+2}{N}[$ is infinite, negative, and greater than $c_k^{(r)}$. Therefore, $|{}^*f(c_k^{(r)})|\leq |{}^*f(x)|$ for any $x\in[\frac{k+r+1}{N},\frac{k+r+2}{N}[$. The same way, for any $k$ in the right sum, $|{}^*f(c_k^{(r)})|\leq |{}^*f(x)|$ for any $x\in[\frac{k+r-1}{N},\frac{k+r}{N}[$. We get
	\begin{align*}
		\|\mathbf{1}_{_{{}^*\mathbb{R}\setminus [-K,K[}}\tilde{f}\|_p^p&\leq \sum_{k=-L}^{-NK-1}\left\||{}^*f|\mathbf{1}_{[\frac{k+r+1}{N},\frac{k+r+2}{N}[}\right\|_p^p+\sum_{k=NK}^{M}\left\||{}^*f|\mathbf{1}_{[\frac{k+r-1}{N},\frac{k+r}{N}[}\right\|_p^p\\
		&\leq\|\mathbf{1}_{_{{}^*\mathbb{R}\setminus[-K+\frac{r}{N},K+\frac{r}{N}[}}{}^*f\|_p^p\simeq 0.
	\end{align*}
	
	We conclude $\operatorname{st}(\|\mathbf{1}_{{}^*\mathbb{R}\setminus [-K,K[}\tilde{f}\|_p)=0$. We now prove $\tilde{f}\simeq {}^*f$ in ${}^*(L_p(\mathbb{R}))$.
	
	Let $\epsilon\in\mathbb{R}_{>0}$. Using underspill, let $n\in\mathbb{N}$ such that $\|\mathbf{1}_{{}^*\mathbb{R}\setminus {}^*[-n,n[}\tilde{f}\|_p<\frac{\epsilon}{3}$ and $\|\mathbf{1}_{\mathbb{R}\setminus [-n,n[}f\|_p<\frac{\epsilon}{3}$. 
	
	For any $x\in{}^*[-n,n[$, we have $\tilde{f}(x)={}^*f(c_k^{(r)})$, where $k$ is such that $x\in[\frac{k}{N},\frac{k+1}{N}[$. Since $c_k^{(r)}$ is in $[\frac{k+r}{N},\frac{k+r+1}{N}[$, we have $|x-c_k^{(r)}|\leq\frac{|r|+1}{N}$, and so $x\simeq c_k^{(r)}$. Since $f$ is continuous, it is uniformly continuous on $[-n,n]$. Therefore, ${}^*f(x)\simeq {}^*f(c_k^{(r)})=\tilde{f}(x)$. Therefore, $\mathbf{1}_{{}^*[-n,n[}|{}^*f-\tilde{f}|\leq\frac{\epsilon}{3(2n)^{\frac{1}{p}}+1}\mathbf{1}_{{}^*[-n,n[}$. We then have:
	
	\begin{align*}
		\|{}^*f-\tilde{f}\|_p&\leq \|\mathbf{1}_{{}^*\mathbb{R}\setminus {}^*[-n,n[}(\tilde{f}-{}^*f)\|_p+\|\mathbf{1}_{{}^* [-n,n[}(\tilde{f}-{}^*f)\|_p\\
		&\leq\|\mathbf{1}_{{}^*\mathbb{R}\setminus {}^*[-n,n[}\tilde{f}\|_p+\|\mathbf{1}_{\mathbb{R}\setminus [-n,n[}f\|_p+\|\mathbf{1}_{{}^*[-n,n[}|\tilde{f}-{}^*f|\|_p\\
		&<\frac{2\epsilon}{3}+\frac{\epsilon}{3(2n)^{\frac{1}{p}}+1}(2n)^{\frac{1}{p}}<\epsilon.
	\end{align*}
	Since $\epsilon$ is arbitrary, we conclude that $\operatorname{st}(\tilde{f})=f$.
\end{proof}

\subsection{Constructing a sampling} Here, the idea behind $\tilde{A}$ will come from numerical approximations of the derivative. As with Section \ref{section_shift}, we will also try to endow $\tilde{H}$ and $\tilde{A}$ with sufficient symmetries.

Let $N\in{}^*\mathbb{N}\setminus\mathbb{N}$ such that $N=N_0!$ for some $N_0\in{}^*\mathbb{N}\setminus\mathbb{N}$. That way, for all $n\in\mathbb{N}, n$ divides $N$. We partition $[-N,N[$ with $\tilde{S}:=\{s_k:=[\frac{k}{N},\frac{k+1}{N}[ \;|\; k\in [-N^2.. N^2[ \}$. We define $\tilde{H}:=\operatorname{span}(\{\mathbf{1}_s \;|\; s\in\tilde{S} \})$.

Then, we use a technique that is similar to what was done in Section \ref{section_shift}. Let $\tilde{R}:\tilde{H}\rightarrow\tilde{H}$ be the right-shift operator modulo $2N^2$, so that it is linear and $\tilde{R}(\mathbf{1}_{s_k})=\mathbf{1}_{s_{k\oplus1}}$. The same way, we let $\tilde{L}:\tilde{H}\rightarrow\tilde{H}$ be the left-shift operator with $\tilde{L}(\mathbf{1}_{s_k})=\mathbf{1}_{s_{k\ominus1}}$. Then, let $\tilde{A}:=-i\frac{\tilde{L}-\tilde{R}}{2/N}$.

Furthermore let, for $k\in [-N^2..N^2[$, $f_k=\frac{1}{\sqrt{2N}}\sum_{l=-N^2}^{N^2-1}\exp(2\pi i\frac{kl}{2N^2})\mathbf{1}_{s_l}$, and $\tilde{\Omega}:=\{f_k\}_{k=-N^2}^{N^2-1}$. We now show:

\begin{prop}
	$(\tilde{H},\tilde{A},\tilde{\Omega})$ forms a sampling for $A$.
\end{prop}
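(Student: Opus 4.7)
I would go through the four conditions of Definition~\ref{definition_sampling} in order, treating (1)--(3) as warm-up computations closely paralleling the shift example, and focusing the real work on condition (\ref{sampling_def_approx}).

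\textbf{Conditions (1)--(3).} For (\ref{sampling_def_H}), $\tilde{H}$ is internal by internal definition (the family $\{\mathbf{1}_s\}_{s\in\tilde{S}}$ is a hyperfinite set of orthogonal elements of ${}^*L_2(\mathbb{R})$, each with ${}^*$norm $1/\sqrt{N}$), so ${}^*\dim(\tilde{H})={}^*|\tilde{S}|=2N^2\in{}^*\mathbb{N}$. For (\ref{sampling_def_A}), I would first note that $\tilde{L}$ and $\tilde{R}$ are ${}^*$linear and adjoint to one another (since they are ${}^*$bijections of an orthonormal ${}^*$basis of $\tilde{H}$ that are inverses of each other), so $\tilde{A}^*=\overline{-i}\,\frac{\tilde{L}^*-\tilde{R}^*}{2/N}=i\,\frac{\tilde{R}-\tilde{L}}{2/N}=\tilde{A}$. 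For (\ref{sampling_def_Omega}), the computation is identical in spirit to the one in Section~\ref{section_shift}: reindexing by the cyclic shift gives $\tilde{R}f_k=e^{-2\pi i k/(2N^2)}f_k$ and $\tilde{L}f_k=e^{2\pi i k/(2N^2)}f_k$, hence
\begin{align*}
\tilde{A}f_k=N\sin\!\left(\pi k/N^2\right)f_k,
\end{align*}
so each $f_k$ is an eigenvector with eigenvalue $\tilde{\lambda}_{f_k}=N\sin(\pi k/N^2)$. Orthonormality of $\{f_k\}_{k=-N^2}^{N^2-1}$ follows by the same geometric-series DFT calculation as in Section~\ref{section_shift}, and since $|\tilde{\Omega}|=2N^2={}^*\dim(\tilde{H})$, it is a ${}^*$basis.

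\textbf{Condition (\ref{sampling_def_approx}).} This is where the work is. Given $x\in C_c^\infty(\mathbb{R})$, I propose
\begin{align*}
\tilde{x}:=\sum_{k=-N^2}^{N^2-1}{}^*x\!\left(\tfrac{k}{N}\right)\mathbf{1}_{s_k}\in\tilde{H}.
\end{align*}
This fits the template of Lemma~\ref{lp-approx} with $p=2$, $r=0$, $c_k=k/N$, so $\operatorname{st}(\tilde{x})=x$ in $H=L_2(\mathbb{R})$. Applying $\tilde{A}$ and reindexing the cyclic shifts gives
\begin{align*}
\tilde{A}\tilde{x}=\sum_{k=-N^2}^{N^2-1}\frac{-i\bigl({}^*x(\tfrac{k\oplus1}{N})-{}^*x(\tfrac{k\ominus1}{N})\bigr)}{2/N}\mathbf{1}_{s_k}.
\end{align*}
Because $x$ has compact support, ${}^*x$ vanishes on a neighbourhood of $\pm N$ (by transfer of the bound on $\operatorname{supp}(x)$), so the cyclic wrap-around terms at $k=\pm N^2$ contribute nothing, and the expression reduces to the ordinary central difference $\frac{-i({}^*x(\frac{k+1}{N})-{}^*x(\frac{k-1}{N}))}{2/N}$ for all relevant $k$.

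\textbf{Main step: finite-difference to derivative.} I would then compare $\tilde{A}\tilde{x}$ with the candidate $\tilde{y}:=\sum_k{}^*(-ix')(\tfrac{k}{N})\mathbf{1}_{s_k}$. Using Taylor's theorem on $x$ and the uniform bound $\|x'''\|_\infty<\infty$ (valid since $x\in C_c^\infty$), for every $k$ with $\tfrac{k}{N}$ in a fixed compact neighbourhood of $\operatorname{supp}(x)$,
\begin{align*}
\left|\frac{-i({}^*x(\tfrac{k+1}{N})-{}^*x(\tfrac{k-1}{N}))}{2/N}-{}^*(-ix')(\tfrac{k}{N})\right|\leq \frac{\|x'''\|_\infty}{6N^2}.
\end{align*}
Outside that compact set both sides vanish. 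Summing the squared differences over the $O(N)$ nonzero terms (each of ${}^*L_2$-mass $1/N$) yields $\|\tilde{A}\tilde{x}-\tilde{y}\|_{{}^*L_2}^2=O(1/N^4)\cdot O(1)\simeq 0$. Finally, Lemma~\ref{lp-approx} applied to $-ix'\in C_c(\mathbb{R})\cap L_2(\mathbb{R})$ gives $\operatorname{st}(\tilde{y})=-ix'=Ax$, so $\operatorname{st}(\tilde{A}\tilde{x})=Ax$ as well.

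\textbf{Where the difficulty lies.} Conditions (1)--(3) are almost mechanical once the analogy with Section~\ref{section_shift} is set up; the only subtlety is the adjointness of $\tilde{L}$ and $\tilde{R}$, which needs the ${}^*$finite-dimensional framework to avoid domain issues. The real care is in (\ref{sampling_def_approx}): one must simultaneously (i) confirm that the cyclic boundary at $\pm N$ is harmless (which uses compact support of $x$ together with the infinitude of $N$), (ii) control the pointwise central-difference error uniformly on the compact support via $C^3$ smoothness, and (iii) package the result through Lemma~\ref{lp-approx} rather than trying to work directly with the internal sample, since the natural sample point for a central difference lives half a cell off the grid and the lemma's flexibility in choosing $c_k^{(r)}$ is exactly what allows one to absorb this shift into $L_2$ convergence.
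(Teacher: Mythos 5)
Your verification of conditions (\ref{sampling_def_H})--(\ref{sampling_def_Omega}) matches the paper's essentially line-for-line, including the handling of the cyclic boundary via compact support. For condition (\ref{sampling_def_approx}), however, you take a genuinely different route at the key analytic step. The paper decomposes the central difference $\tfrac{{}^*g(\frac{k+1}{N})-{}^*g(\frac{k-1}{N})}{2/N}$ into the sum of a forward and a backward difference, applies the mean value theorem to each to rewrite them as ${}^*(g')(c_k)$ and ${}^*(g')(c_{k-1})$ with intermediate points $c_k\in\,]\tfrac{k}{N},\tfrac{k+1}{N}[$, and then invokes Lemma~\ref{lp-approx} on each of the two resulting Riemann-type sums --- this is precisely where the lemma's $r$-shift parameter $c_k^{(r)}\in[\tfrac{k+r}{N},\tfrac{k+r+1}{N}]$ earns its keep. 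You instead compare $\tilde{A}\tilde{x}$ directly against $\tilde{y}=\sum_k{}^*(-ix')(\tfrac{k}{N})\mathbf{1}_{s_k}$, control the pointwise central-difference error by Taylor's theorem with a $\|x'''\|_\infty/(6N^2)$ bound (valid uniformly since $x\in C_c^\infty$), sum the $O(N)$ nonzero contributions to get $\|\tilde{A}\tilde{x}-\tilde{y}\|^2=O(1/N^4)\simeq 0$, and then apply Lemma~\ref{lp-approx} once with $r=0$ to identify $\operatorname{st}(\tilde{y})=-ix'=Ax$. Both arguments are correct. The paper's version only uses $C^1$ regularity and naturally motivates the $r$-shift built into Lemma~\ref{lp-approx}; yours requires $C^3$ (available here), produces an explicit quantitative rate, and only needs the lemma in its unshifted form. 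One small inaccuracy in your closing commentary: you describe the flexibility of $c_k^{(r)}$ as ``exactly what allows one to absorb this shift,'' but your own proof sets $r=0$ throughout and relies on Taylor instead --- that remark is a better description of the paper's route than of yours.
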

\begin{proof}
	
By definition, $\tilde{H}$ is an internal subspace of ${}^*H$. Noting that $\{\mathbf{1}_s\;|\;s\in\tilde{S}\}$ is an orthogonal set, we have that ${}^*\operatorname{dim}(\tilde{H})=|\tilde{S}|=2N^2\in{}^*\mathbb{N}$, and Property~(\ref{sampling_def_H}) holds. We know both $\tilde{R}$ and $\tilde{L}$ are internal linear operators on $\tilde{H}$. Furthermore, we have, on $k,l\in[-N^2.. N^2[$, $(\mathbf{1}_{s_{k\oplus1}},\mathbf{1}_{s_l})=(\mathbf{1}_{s_k},\mathbf{1}_{s_{l\ominus1}})$. Therefore, the adjoint of $\tilde{R}$ is $\tilde{L}$, and vice versa. We conclude that $\tilde{A}$ is an internal symmetric operator on $\tilde{H}$, proving Property~(\ref{sampling_def_A}). 

Then, for $k_1,k_2\in[-N^2..N^2[$, we can evaluate:
\begin{align*}
	(f_{k_1},f_{k_2})&=\frac{1}{2N}\sum_{l=-N^2}^{N^2-1}\exp(2\pi i\frac{k_1l}{2N^2})\overline{\exp(2\pi i\frac{k_2l}{2N^2})}\frac{1}{N}\\
	&=\frac{1}{2N^2}\sum_{l=-N^2}^{N^2-1}\exp\left(2\pi i \frac{k_1-k_2}{2N^2}l\right).\\
\end{align*}
If $k_1=k_2$, we get 
\begin{align*}
	(f_{k_1},f_{k_2})&=\frac{1}{2N^2}\sum_{l=-N^2}^{N^2-1}1=1.
\end{align*}
If $k_1\neq k_2$, then $\frac{k_1-k_2}{2N^2}\notin{}^*\mathbb{Z}$, and we get
\begin{align*}
	(f_{k_1},f_{k_2})&=\frac{1}{2N^2}\frac{\exp\left(2\pi i \frac{k_1-k_2}{2N^2}N^2\right)-\exp\left(2\pi i \frac{k_1-k_2}{2N^2}(-N^2)\right)}{\exp\left(2\pi i \frac{k_1-k_2}{2N^2}\right)-1}\\
	&=\frac{1}{2N^2}\frac{\exp\left(\pi i(k_1-k_2)\right)-\exp\left(-\pi i (k_1-k_2)\right)}{\exp\left(2\pi i \frac{k_1-k_2}{2N^2}\right)-1}=0.
\end{align*}
Therefore, $\tilde{\Omega}$ is an internal orthonormal system. Furthermore, ${}^*|\tilde{\Omega}|=2N^2={}^*\operatorname{dim}(\tilde{H})$. Therefore, $\tilde{\Omega}$ is an orthonormal ${}^*$basis of $\tilde{H}$. We then evaluate:
\begin{align*}
	\tilde{R}(f_k)&=\frac{1}{\sqrt{2N}}\sum_{l=-N^2}^{N^2-1} \exp(2\pi i \frac{kl}{2N^2})\mathbf{1}_{s_{l\oplus1}}\\
	&=\frac{1}{\sqrt{2N}}\sum_{l=-N^2}^{N^2-1} \exp(2\pi i \frac{k(l\ominus1)}{2N^2})\mathbf{1}_{s_l}\\
	&=\frac{1}{\sqrt{2N}}\sum_{l=-N^2}^{N^2-1} \exp(2\pi i \frac{k(l-1)}{2N^2})\mathbf{1}_{s_l}\\
	&=\exp(2\pi i \frac{-k}{2N^2})f_k.
\end{align*}
The same way, $\tilde{L}(f_k)=\exp(2\pi i\frac{k}{2N^2})f_k$. We then get:
\begin{align*}
	\tilde{A}(f_k)=-i\frac{\exp(2\pi i \frac{k}{2N^2})-\exp(2\pi i\frac{-k}{2N^2})}{2/N}f_k=N\sin(\pi\frac{k}{N^2})f_k.
\end{align*} 
Therefore, $\tilde{\Omega}$ consists of eigenvectors of $\tilde{A}$, establishing Property~(\ref{sampling_def_Omega}). All that is left to show is Property~(\ref{sampling_def_approx}), stating $G(A)\subset \operatorname{st}(G(\tilde{A}))$.

Let $g\in\operatorname{dom}(A)=C_c^\infty (\mathbb{R})$, and let $\tilde{g}=\sum_{k=-N^2}^{N^2-1}{}^*g(\frac{k}{N})\mathbf{1}_{[\frac{k}{N},\frac{k+1}{N}[}\in\tilde{H}$. Since $g$ is compactly supported, $|g|$ is decreasing at infinity. Therefore, using $p=2$, $g$ respects all conditions of Lemma~\ref{lp-approx}, and so $\operatorname{st}(\tilde{g})=g$. Furthermore, since ${}^*g(x)=0$ for any infinite $x$, and since $k\oplus1=k+1$ and $k\ominus1=k-1$ for any limited $\frac{k}{N}$:
\begin{align*}
	\tilde{A}\tilde{g}&=\frac{-i}{2/N}\sum_{k=-N^2}^{N^2-1}{}^*g(\frac{k}{N})(\mathbf{1}_{s_{k\ominus1}}-\mathbf{1}_{s_{k\oplus1}})\\
	&=\frac{-i}{2/N}\sum_{k=-N^2}^{N^2-1}({}^*g(\frac{k\oplus1}{N})-{}^*g(\frac{k\ominus1}{N}))\mathbf{1}_{s_{k}}\\
	&=\frac{-i}{2/N}\sum_{k=-N^2}^{N^2-1}({}^*g(\frac{k+1}{N})-{}^*g(\frac{k-1}{N}))\mathbf{1}_{s_{k}}\\
	&=\frac{-i}{2}\sum_{k=-N^2}^{N^2-1}\frac{({}^*g(\frac{k+1}{N})-{}^*g(\frac{k}{N}))+({}^*g(\frac{k}{N})-{}^*g(\frac{k-1}{N}))}{1/N}\mathbf{1}_{[\frac{k}{N},\frac{k+1}{N}[}\\
	&=\frac{-i}{2}\sum_{k=-N^2}^{N^2-1}({}^*(g')(c_k)+{}^*(g')(c_{k-1}))\mathbf{1}_{[\frac{k}{N},\frac{k+1}{N}[}\\
	&=\frac{-i}{2}\left(\sum_{k=-N^2}^{N^2-1}{}^*(g')(c_k)\mathbf{1}_{[\frac{k}{N},\frac{k+1}{N}[}+\sum_{k=-N^2}^{N^2-1}{}^*(g')(c_{k-1})\mathbf{1}_{[\frac{k}{N},\frac{k+1}{N}[} \right).
\end{align*}
Where $c_k\in[\frac{k}{N},\frac{k+1}{N}[$ is given by transfer of the mean value property that $g$ has. Since $c_{k-1}\in[\frac{k-1}{N},\frac{k-1+1}{N}[$, and since $g'$ is also continuous and compactly supported, we can apply Lemma~\ref{lp-approx} to both sums and we get $\tilde{A}\tilde{g}\simeq\frac{-i}{2}(g'+g')=Ag$. Thus, $(g,Ag)\in\operatorname{st}(G(\tilde{A}))$. We conclude $G(A)\subset\operatorname{st}(G(\tilde{A}))$, ending the proof.
\end{proof}
\subsection{Constructing a scale} Here, we want to construct not only a scale that respects the criteria, but also have sufficiently well-behaved interactions with $\tilde{\Omega}$. To this end, we will have $\tilde{e}_j$ to be different shifts of the same $\tilde{e}$, so that we can have $(f_k,\tilde{e_j})$ be simply a unit multiplication of the same $(f_k,\tilde{e})$. For the choice of $\tilde{e}$, it will greatly simplify later calculations if:
\begin{itemize}
	\item $\operatorname{st}(\tilde{e})$ is an even function, has continuous square-integrable derivative and whose shifts span a dense subset.
	\item $\operatorname{st}\left(\frac{(f_k,\tilde{e})}{|(f_k,\tilde{e})|}\right)=1$ almost everywhere with respect to $\mu_L$. Intuitively, the expected criterion would be that $\operatorname{st}(\tilde{e})$ is itself in $L_1(\mathbb{R})$, with Fourier transform always positive.
\end{itemize}

With this in mind, we define $\tilde{e}\in\tilde{H}$ by:
\begin{align*}
	\tilde{e}=\left(\frac{2}{\pi}\right)^{\frac{1}{4}}\left(\sum_{k=-N_1}^{N_1} \exp(-(\frac{k}{N})^2)\mathbf{1}_{s_k}\right)+\frac{i}{N}\mathbf{1}_{s_0},
\end{align*}

where $N_1\in{}^*\mathbb{N}\setminus\mathbb{N}$, and for all $q\in\mathbb{Q}$, we have $\sqrt{N}<\frac{N_1}{N}+q<N$. For example, $\lfloor N^{\frac{5}{3}}\rfloor$ works.

Furthermore, let $e:=\left(\frac{2}{\pi}\right)^{\frac{1}{4}}\exp(-(\boldsymbol{\cdot})^2)$. We note that $e\in L_2(\mathbb{R})$ with $\|e\|=1$.

Let $(q_j)_{j\in\mathbb{N}}$ be a counting of $\mathbb{Q}$ starting with $q_1=0$. Since $N$ is divisible by any standard natural, we know for any $j\in\mathbb{N}$, $k_j:=Nq_j\in{}^*\mathbb{Z}$. Let $(k_j)_{j\in[N]}$ be an internal extension of this sequence in ${}^*\mathbb{Z}$. 

We then define, for $j\in[N]$, $\tilde{e}_j=\tilde{R}^{k_j}\tilde{e}$, noting that $\tilde{R}$ is an internal isometry on $\tilde{H}$ with inverse $\tilde{L}$. Furthermore, let $\tilde{c}_j:=\frac{1}{2^j\|\tilde{e}\|^2(1-2^{-N})}$, for $j\in[N]$. We now show:

\begin{prop}
	$((\tilde{e}_j)_{j\in[N]},(\tilde{c}_j)_{j\in[N]})$ forms a standard biased scale compatible with $(\tilde{H},\tilde{A},\tilde{\Omega})$.
\end{prop}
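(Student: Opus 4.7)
The plan is to verify each of the seven numbered properties of Definitions~\ref{definition_sampling}--\ref{definition_scale} applicable to a compatible scale. The workhorse is Lemma~\ref{lp-approx}; everything reduces to controlling $\tilde{e}$ and $\tilde{A}\tilde{e}$ via Riemann-sum approximations, with the shifted versions handled by exploiting that $\tilde{A}\tilde{R} = \tilde{R}\tilde{A}$ (both are polynomials in $\tilde{R}$ and $\tilde{R}^{-1}=\tilde{L}$) and that $\tilde{R}$ is a $*$-isometry on $\tilde{H}$.

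First, I would show $\operatorname{st}(\tilde{e}_j) = e_j$ in $L_2(\mathbb{R})$, where $e_j(x) := (2/\pi)^{1/4}\exp(-(x-q_j)^2)$. For standard $j$, $k_j = Nq_j$ has $|k_j| + N_1 \ll N^2$, so reindexing $l = k + k_j$ in $\tilde{R}^{k_j}\tilde{e}$ incurs no cyclic wrap-around. The imaginary correction $(i/N)\mathbf{1}_{s_{k_j}}$ has $L_2$-norm $N^{-3/2}$, infinitesimal, and the truncation at $|l - k_j| = N_1$ differs from the full range $[-N^2..N^2[$ by Gaussian-tail terms of infinitesimal total $L_2$-norm. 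The remainder is precisely of the form required by Lemma~\ref{lp-approx} with $f = e_j$, $r=0$, $c_l^{(0)} = l/N$, yielding $\operatorname{st}(\tilde{e}_j) = e_j$. Since $\tilde{R}$ is a $*$-isometry, $\|\tilde{e}_j\| = \|\tilde{e}\|$ and $\operatorname{st}(\|\tilde{e}\|) = \|e\| = 1$, so $\operatorname{st}(\tilde{c}_j) = 2^{-j}$ and $\operatorname{st}(\tilde{c}_j\tilde{e}_j) = 2^{-j}e_j \ne 0$. This establishes Property~(\ref{scale_def_NS}).

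The key computation is Property~(\ref{scale_def_compat_A}), for which I would first show $\operatorname{st}(\tilde{A}\tilde{e}) = Ae = -ie'$. Direct calculation on each cell $s_m$ with $|m| < N_1$ and $m\notin\{-1,0,1\}$ gives $(\tilde{A}\tilde{e})|_{s_m} = -i(N/2)[h((m+1)/N) - h((m-1)/N)]$ with $h(x) = (2/\pi)^{1/4}\exp(-x^2)$. Since $h$ is $C^3$ with uniformly bounded third derivative, Taylor expansion yields $-ih'(m/N) + O(1/N^2)$, and the residual over $\sim N_1$ cells has $L_2$-norm $O(\sqrt{N_1}/N^{5/2})$, infinitesimal for $N_1 \sim N^{5/3}$. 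The boundary cells at $m = \pm N_1$ where one neighbor falls outside the support contribute magnitudes $O(Ne^{-N_1^2/N^2})$, utterly infinitesimal; the two spikes at $m = \pm 1$ created by the $(i/N)$-correction have $L_2$-norm $O(N^{-1/2})$. Lemma~\ref{lp-approx} applied to $-ie'$ then gives $\operatorname{st}(\tilde{A}\tilde{e}) = Ae$. Passing to $\tilde{A}\tilde{e}_j = \tilde{R}^{k_j}\tilde{A}\tilde{e}$, an analogous application of Lemma~\ref{lp-approx} with shift $r = -k_j$ shows $\operatorname{st}(\tilde{A}\tilde{e}_j) = Ae_j$, nearstandard.

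The remaining properties are largely bookkeeping. Properties~(\ref{scale_def_proba}) and~(\ref{scale_def_bias}) follow from $\tilde{c}_j\|\tilde{e}_j\|^2 = 2^{-j}/(1-2^{-N})$ and the geometric series. Property~(\ref{scale_def_compat_H}) holds since $\tilde{e} \in \tilde{H}$ by construction and $\tilde{R}$ preserves $\tilde{H}$. For Property~(\ref{scale_def_dense}), I would invoke the classical fact that translates of a Gaussian span a dense subspace of $L_2(\mathbb{R})$ (since $\hat{e}$ is nowhere zero), combined with $L_2$-continuity of translation and density of $\mathbb{Q}$ in $\mathbb{R}$. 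For Property~(\ref{scale_def_compat_Omega}), since $q_1 = 0$, $\tilde{e}_1 = \tilde{e}$ and $\tilde{c}_1 > 0$, so it suffices to show $(\tilde{e}, f_k) \ne 0$ for every $f_k$; the real part of $\tilde{e}$ is symmetric under $l \mapsto -l$, making $(\tilde{e}_{\mathrm{real}}, f_k) \in \mathbb{R}$, while the $(i/N)\mathbf{1}_{s_0}$ term contributes exactly $i/(\sqrt{2N}\,N^2)$ to the imaginary part of the inner product, exposing the purpose of the small imaginary correction in the definition of $\tilde{e}$. The main technical obstacle is the $L_2$-estimate in the second step: handling the Taylor residuals and the various boundary and spike contributions uniformly, so that the standard part of $\tilde{A}\tilde{e}$ is genuinely $-ie'$ rather than something cluttered by numerical artifacts.
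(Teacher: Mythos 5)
Your proposal is essentially correct and follows the paper's structure: Lemma~\ref{lp-approx} does the heavy lifting, shift-invariance handles the $\tilde{e}_j$, and the role of the $(i/N)\mathbf{1}_{s_0}$ correction in Property~(\ref{scale_def_compat_Omega}) is exposed exactly as in the paper. The one substantive methodological difference is in showing $\operatorname{st}(\tilde{A}\tilde{e}) = -ie'$: the paper splits the symmetric difference quotient into two one-sided quotients, applies the mean value theorem so that each lands \emph{exactly} on values $e'(c_k)$ and $e'(c_{k-1})$ with $c_k \in \left]\frac{k}{N},\frac{k+1}{N}\right[$, and then invokes Lemma~\ref{lp-approx} twice with $r=0$ and $r=-1$; you Taylor expand to $e'(m/N)+O(N^{-2})$ and separately bound the residual in $L_2$. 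Both work, but the MVT route bypasses the explicit residual estimate. One small slip to flag: ``an analogous application of Lemma~\ref{lp-approx} with shift $r=-k_j$'' cannot be taken literally, since the lemma requires $r\in\mathbb{Z}$ standard while $k_j=Nq_j$ is a hyperinteger. Your opening remark already contains the fix: since $\tilde{A}$ and $\tilde{R}$ commute and there is no cyclic wrap-around for standard $j$, we get $\tilde{A}\tilde{e}_j = {}^*\mathcal{T}_{q_j}(\tilde{A}\tilde{e})$, which is nearstandard because ${}^*\mathcal{T}_{q_j}$ is the extension of a standard isometry and hence preserves nearstandardness; equivalently, reindex and apply Lemma~\ref{lp-approx} with $f=Ae_j$ and $r=0$, just as you handled $\tilde{e}_j$ itself.
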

\begin{proof}
For this proof, let $\mathcal{T}_t:L_2(\mathbb{R})\rightarrow L_2(\mathbb{R})$ with $(\mathcal{T}_t(f))(x)=f(x-t)$ for any $t\in\mathbb{R}$.
	 	
First, since $\frac{i}{N}\mathbf{1}_{s_0}$ is infinitesimal, we note $\tilde{e}\simeq\sum_{k=-N_1}^{N_1}{}^*e(\frac{k}{N})\mathbf{1}_{[\frac{k}{N},\frac{k+1}{N}[}$. Furthermore $e$ is continuous, and $\forall x,y\in\mathbb{R}$, $|x|\leq|y|\implies |f(x)|\leq |f(y)|$. Therefore, by Lemma~\ref{lp-approx}, $e=\operatorname{st}(\tilde{e})$.

We note that for each $k\in[-N_1-1..N_1+1]$ and $j\in\mathbb{N}$, $-N^2\leq k+k_j\leq N^2-1$, since $-N<-(\frac{N_1}{N}-q_j)<\frac{N_1}{N}+q_j<N$. Therefore, $k\oplus k_j=k+k_j$, so $\tilde{e}_j={}^*\mathcal{T}_{q_j}(\tilde{e})$. Since $\mathcal{T}_{q_j}$ is an isometry of $L_2(\mathbb{R})$, we have $\operatorname{st}(\tilde{e}_j)=\mathcal{T}_{q_j}(e)\neq 0$. We also have $\operatorname{st}(\tilde{c}_j)=\frac{1}{2^j}>0$.

Through elementary means, we can show that $(\operatorname{st}(\tilde{e}_j))_{j\in\mathbb{N}}$ spans a dense subset. Since that fact is already known (or can be shown using Fourier transforms and convolutions), we will only give a broad idea as to how. To keep everything visual, in the following list $f(x)$ is written as the function $f:\mathbb{R}\rightarrow\mathbb{C}$. Let $V=\overline{\operatorname{span}(\{\operatorname{st}(\tilde{e}_j)\}_{j\in\mathbb{N}})}$. Then, with dominated convergence, we can show:

\begin{itemize}
	\item $\forall q\in\mathbb{Q}$, $e^{qx}e^{-x^2}\in V$ ($(\mathcal{T}_{q_j}(e))(x)=e^{-q_j^2}e^{2q_jx}e(x)$).
	\item $\forall t\in\mathbb{R}$, $e^{tx}e^{-x^2}\in V$, using $e^{r_n x}e^{-x^2}$, where $r_n\rightarrow t$.
	\item $\forall t\in\mathbb{R}$, $xe^{tx}e^{-x^2}\in V $, using $\frac{e^{(t+1/n)x}-e^{tx}}{1/n}e^{-x^2}$, and using the mean value theorem to dominate the sequence.
	\item $\forall n\in\mathbb{N}, t\in\mathbb{R}, x^ne^{tx}e^{-x^2}\in V$, using the same method applied by recurrence.
	\item $\forall t\in\mathbb{R}, e^{itx}e^{-x^2}\in V$, using $\sum_{k=0}^n\frac{(it)^k}{k!}x^ke^{-x^2}$.
\end{itemize}
For $k\in\mathbb{N}$, if $P_kf$ is the periodic extension of $f|_{[-k,k[}$, then we continue:
\begin{itemize}
	\item $\forall f\in C_c^{\infty}(\mathbb{R}),k\in\mathbb{N}$, $ \overline{\operatorname{supp}(f)}\subset]-k,k[\implies (P_kf)(x)e^{-x^2}\in V$, using $(\sum_{l=-n}^n \widehat{(P_kf)}_le^{\frac{2\pi i l}{2k}x})) e^{-x^2}$, noting the Fourier series converge uniformly ($P_kf$ is smooth).
	\item  $\forall f\in C_c^{\infty}(\mathbb{R}),f\in V$, using, for $]-n,n[\supset\overline{\operatorname{supp}(f)}$,   $(P_n(fe^{(\boldsymbol{\cdot}^2)}))(x)e^{-x^2}$
	\item $\forall f\in H, f\in V$, since $C_c^{\infty}(\mathbb{R})$ is dense.
\end{itemize}

Thus,  $(\operatorname{st}(\tilde{e}_j))_{j\in\mathbb{N}}$ spans a dense subset. Furthermore, we know $\|\tilde{e}_j\|=\|\tilde{e}\|$ since $\tilde{R}$ is an isometry. Therefore, $\sum_{j\in[N]}\tilde{c}_j\|\tilde{e}_j\|^2=\sum_{j\in[N]}\frac{1}{2^j(1-2^{-N})}=1$. We also have $\|\operatorname{st}(\tilde{e}_j)\|=\|e\|=1$ for any $j\in\mathbb{N}$, and so $\sum_{j\in\mathbb{N}}\operatorname{st}(\tilde{c}_j)\|\operatorname{st}(\tilde{e}_j)\|^2=\sum_{h\in\mathbb{N}}\frac{1}{2^j}=1$. 

We have shown Properties (\ref{scale_def_NS}) to (\ref{scale_def_bias}) of Definition~\ref{definition_scale}, and we conclude that the pair $((\tilde{e}_j)_{j\in[N]},(\tilde{c}_j)_{j\in[N]})$ is a standard-biased scale.

We now show it is compatible with $(\tilde{H},\tilde{A},\tilde{\Omega})$. By definition, $\tilde{e}_j=\tilde{R}^{k_j}\tilde{e}\in\tilde{H}$, so Property~(\ref{scale_def_compat_H}) holds. Furthermore, we have:
\begin{align*}
	\tilde{A}\tilde{e}&=\tilde{A}\left( \left(\sum_{k=-N_1}^{N_1} {}^*e(\frac{k}{N})\mathbf{1}_{s_k}\right)+\frac{i}{N}\mathbf{1}_{s_0}  \right)\\
	&=\frac{1}{2}(\mathbf{1}_{s_{-1}}-\mathbf{1}_{s_1})+\sum_{k=-N_1}^{N_1}{}^*e(\frac{k}{N})\frac{-i}{2/N}(\mathbf{1}_{s_{k\ominus1}}-\mathbf{1}_{s_{k\oplus1}})\\
	&\simeq \sum_{k=-N_1}^{N_1}{}^*e(\frac{k}{N})\frac{-i}{2/N}(\mathbf{1}_{s_{k-1}}-\mathbf{1}_{s_{k+1}})\\
	&=\frac{-i}{2/N}\left(\sum_{k=-N_1-1}^{N_1-1}{}^*e(\frac{k+1}{N})\mathbf{1}_{s_{k}}-\sum_{k=-N_1+1}^{N_1+1}{}^*e(\frac{k-1}{N})\mathbf{1}_{s_{k}} \right)\\
	&=\left(\frac{-i}{2}\sum_{k=-N_1+1}^{N_1-1}\frac{{}^*e(\frac{k+1}{N})-{}^*e(\frac{k-1}{N})}{1/N}\mathbf{1}_{s_{k}}\right)+w.
\end{align*}
Here, $w$ is a sum of the four terms left out. We note that for any standard integer $r$, $\frac{|\pm N_1+r|}{N}\geq \frac{N_1-|r|}{N}>\frac{N_1}{N}-1>\sqrt{N}$. And so $0\leq\frac{{}^*e(\frac{\pm N_1+r}{N})}{1/N}\leq N{}^*e(\sqrt{N})=\left( \frac{2}{\pi} \right)^{\frac{1}{4}} N\exp(-N)\simeq 0$. Therefore, $w$ is infinitesimal, and so we proceed with the same strategy as before:
\begin{align*}
	\tilde{A}\tilde{e}&\simeq \frac{-i}{2}\sum_{k=-N_1+1}^{N_1-1}\frac{{}^*e(\frac{k+1}{N})-{}^*e(\frac{k-1}{N})}{1/N}\mathbf{1}_{s_{k}}\\
	&=\frac{-i}{2}\sum_{k=-N_1+1}^{N_1-1}\left(\frac{{}^*e(\frac{k+1}{N})-{}^*e(\frac{k}{N})}{1/N}+\frac{{}^*e(\frac{k}{N})-{}^*e(\frac{k-1}{N})}{1/N} \right)\mathbf{1}_{s_{k}}\\
	&=\frac{-i}{2}\left(\sum_{k=-N_1+1}^{N_1-1}{}^*(e')(c_k)\mathbf{1}_{[\frac{k}{N},\frac{k+1}{N}[} + \sum_{k=-N_1+1}^{N_1-1}{}^*(e')(c_{k-1}) \mathbf{1}_{[\frac{k}{N},\frac{k+1}{N}[}\right).
\end{align*}

Again $c_k\in]\frac{k}{N},\frac{k+1}{N}[$ is given by the mean value theorem applied to $e$ transferred to ${}^*e$. We have $e'(t)=-2t\left( \frac{2}{\pi} \right)^{\frac{1}{4}}\exp(-t^2)$, and so $e'$ is continuous and in $L_2(\mathbb{R})$. From studying $e''$, we also find that $|e'|$ is decreasing at infinity. And so, we can apply Lemma~\ref{lp-approx} and conclude that $\operatorname{st}(\tilde{A}\tilde{e})=\frac{-i}{2}(e'+e')=-ie'$.

We note that $\tilde{R}$ and $\tilde{A}=-2\frac{\tilde{R}^{-1}-\tilde{R}}{2/N}$ commute. As before, for any $k$ in $[-N_1-1 .. N_1+1]$ and standard natural $j$, we have $k\oplus k_j=k+k_j$. Therefore, we get $\tilde{A}\tilde{e}_j=\tilde{R}^{k_j}\tilde{A}\tilde{e}={}^*\mathcal{T}_{q_j}(\tilde{A}\tilde{e})$ for any standard natural $j$. Thus $\tilde{A}\tilde{e}_j$ is nearstandard in $H$ with $\operatorname{st}(\tilde{A}\tilde{e}_j)=-i\mathcal{T}_{q_j}e'$, proving Property~(\ref{scale_def_compat_A}).

All that is left to verify is Property~(\ref{scale_def_compat_Omega}), requiring that for all $f_k\in\tilde{\Omega}$, there exists $j\in[N]$ such that $(f_k,\tilde{e}_j)\neq 0$. We note that since $q_1=0$, we have $k_1=0$ and $\tilde{e}_1=\tilde{e}$. And so, using that $e$ is an even real-valued function:

\begin{align*}
	(f_k,\tilde{e}_1)&=(f_k,\frac{i}{N}\mathbf{1}_{s_0})+\sum_{l=-N_1}^{N_1}{}^*e(\frac{l}{N})(f_k,\mathbf{1}_{s_l})\\
	&=\frac{1}{\sqrt{2N}}\left(\frac{-i}{N}\|\mathbf{1}_{s_0}\|^2+\sum_{l=-N_1}^{N_1}{}^*e(\frac{l}{N})\exp(2\pi i\frac{kl}{2N^2})\|\mathbf{1}_{s_l}\|^2 \right)\\
	&=\frac{1}{N\sqrt{2N}}\left(\frac{-i}{N}+{}^*e(0)+\sum_{l=1}^{N_1}{}^*e(\frac{l}{N})(\exp(2\pi i\frac{kl}{2N^2})+\exp(2\pi i\frac{-kl}{2N^2}))\right)\\
	&=\frac{1}{N\sqrt{2N}}\left(\frac{-i}{N}+e(0)+2\sum_{l=1}^{N_1}{}^*e(\frac{l}{N})\cos(2\pi\frac{kl}{2N^2})\right).
\end{align*}

Consequently, $\operatorname{Im}((f_k,\tilde{e}_1))=\frac{-1}{N^2\sqrt{2N}}\neq 0$. Therefore, for any $f_k\in\tilde{\Omega}$, there exists $j\in[N]$ such that $(f_k,\tilde{e}_j)\neq 0$. Finally, we conclude that the standard-biased scale $((\tilde{e}_j)_{j\in[N]},(\tilde{c}_j)_{j\in[N]} )$ is compatible with sampling $(\tilde{H},\tilde{A},\tilde{\Omega})$.
\end{proof}
\subsection{Characteristics of the Loeb space}
We start by noting for any $f_k\in\tilde{\Omega}$ and $j\in[N]$, $(f_k,\tilde{e}_j)=(\tilde{L}^{k_j}f_k,\tilde{e})=\exp(2\pi i \frac{k k_j}{2N^2})(f_k,\tilde{e})$.
Let $V\in\tilde{\mathcal{A}}$, i.e. an internal subset of $\tilde{\Omega}$. We have
\begin{align*}
	\tilde{\mu}(V)&=\sum_{j\in[N]}\tilde{c}_j\sum_{f_k\in V}|(\tilde{e}_j,f_k)|^2\\
	&=\sum_{j\in[N]}\tilde{c}_j\sum_{f_k\in V}|(\tilde{e},f_k)|^2\\
	&=\sum_{j\in[N]}\tilde{c}_j\|\operatorname{proj}_V\tilde{e}\|^2\\
	&=\frac{\|\operatorname{proj}_V\tilde{e}\|^2}{\|\tilde{e}\|^2}\sum_{j\in[N]}\tilde{c}_j\|\tilde{e}_j\|^2=\left(\frac{\|\operatorname{proj}_V\tilde{e}\|}{\|\tilde{e}\|}\right)^2.
\end{align*} 

Furthermore, for any $j\in[N]$ and $f_k\in\tilde{\Omega}$, we have
\begin{align*}
	(\tilde{U}(\tilde{e}_j))(f_k)=\frac{(\tilde{e}_j,f_k)}{\sqrt{\tilde{\mu}(f_k)}}=\exp(-2\pi i\frac{k_jk}{2N^2})\frac{(\tilde{e},f_k)}{|(\tilde{e},f_k)|/\|\tilde{e}\|}.
\end{align*}

Let $\Omega_{\mathbb{R}}=\{f_k\in\Omega_L\;|\; \frac{k}{N} \text{ is limited} \}$. Furthermore, for standard $a,b\in\mathbb{R}$, let $\tilde{\Omega}_a^b=\{f_k\in\tilde{\Omega} \;|\; a\leq \frac{k}{N}< b \}\in \tilde{\mathcal{A}}$. Also, let $g_0:\mathbb{R}\rightarrow\mathbb{R}_{>0}$ with 
\begin{align*}
	g_0(\omega)=\left(\frac{\pi}{2}\right)^{\frac{1}{2}}\exp\left(-\frac{\pi^2\omega^2}{2}\right).
\end{align*}

We show the following.

\begin{prop}
	 We have, for any $a, b\in\mathbb{R}$:
	\begin{align*}
		\mu_L(\tilde{\Omega}_a^b)=\int_{[a,b[}g_0d\mu.
	\end{align*}
	Furthermore, $\Omega_{\mathbb{R}}\in\mathcal{A}_L$ and $\mu_L(\Omega_{\mathbb{R}})=1$.
\end{prop}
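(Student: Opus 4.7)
The plan is to use the formula $\tilde{\mu}(V) = \|\operatorname{proj}_V \tilde{e}\|^2/\|\tilde{e}\|^2$ derived just above the statement, together with $\|\tilde{e}\|^2 \simeq \|e\|^2 = 1$. Since $\tilde{\Omega}$ is an orthonormal ${}^*$basis of $\tilde{H}$, the first claim reduces to showing
\begin{align*}
\operatorname{st}\!\Big(\sum_{f_k \in \tilde{\Omega}_a^b} |(\tilde{e}, f_k)|^2\Big) = \int_{[a,b[} g_0\, d\mu.
\end{align*}

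The central computation is to evaluate $(\tilde{e}, f_k)$ by unfolding the definitions. The $(i/N)\mathbf{1}_{s_0}$ summand in $\tilde{e}$ is infinitesimal in $L_2$, leaving
\begin{align*}
(\tilde{e}, f_k) \simeq \frac{1}{\sqrt{2N}} \sum_{l=-N_1}^{N_1} {}^*e(l/N)\exp\!\big(-i\pi(k/N)(l/N)\big) \cdot \tfrac{1}{N},
\end{align*}
which is a Riemann sum for the Fourier-type integral $\hat{e}(\omega) := \int_{\mathbb{R}} e(x)\exp(-i\pi\omega x)\, dx$ evaluated at $\omega = k/N$. Completing the square in the Gaussian gives $\hat{e}(\omega) = (2\pi)^{1/4}\exp(-\pi^2\omega^2/4)$, so $|\hat{e}(\omega)|^2 = 2g_0(\omega)$. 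For $k/N$ nearstandard with standard part $\omega \in [a,b[$, Lemma~\ref{lp-approx} applied with $p=1$ to $x \mapsto e(x)\exp(-i\pi\omega x)$ (continuous, $L_1$, with modulus $|e|$ decreasing at infinity), together with a simple estimate replacing the standard $\omega$ by the nearstandard $k/N$, gives $(\tilde{e}, f_k) \simeq \hat{e}(\omega)/\sqrt{2N}$, hence $|(\tilde{e}, f_k)|^2 \simeq g_0(\omega)/N$.

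Summing over $Na \leq k < Nb$ yields a sum that I want to identify with the Riemann sum $\sum (1/N) g_0(k/N)$, whose standard part is $\int_{[a,b[} g_0\, d\mu$. The main obstacle is that we sum $\sim N(b-a)$ individually infinitesimal errors, so the per-term control from the previous paragraph is not by itself enough. I would address this either by upgrading the Riemann-sum approximation to a uniform $O(1/N^2)$ per-term error from the smoothness of the integrand (standard midpoint-rule bounds), making the cumulative error $O(1/N)$ and thus infinitesimal; or by a global Parseval-type comparison, noting that $\sum_{\text{all } k} |(\tilde{e}, f_k)|^2 = \|\tilde{e}\|^2 \simeq 1 = \int_\mathbb{R} g_0\, d\mu$ and using S-integrability of the weights to match the standard part of the distribution of mass under $k \mapsto k/N$ with the measure $g_0\, d\mu$.

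For the second assertion, $\Omega_\mathbb{R} = \bigcup_{n \in \mathbb{N}} \tilde{\Omega}_{-n}^n$ is a countable union of internal subsets of $\tilde{\Omega}$, so it belongs to $\mathcal{A}_L$. By the first part and countable additivity,
\begin{align*}
\mu_L(\Omega_\mathbb{R}) = \lim_{n \to \infty} \int_{[-n,n[} g_0\, d\mu = \int_\mathbb{R} g_0\, d\mu = 1,
\end{align*}
the last equality being the standard Gaussian integral evaluation.
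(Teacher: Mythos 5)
Your overall plan matches the paper's: compute $\tilde{\mu}(\tilde{\Omega}_a^b)$ via $\|\operatorname{proj}_V\tilde{e}\|^2/\|\tilde{e}\|^2$, identify the inner sum over $l$ as a Riemann-type approximation to the Fourier integral $\hat{e}(\omega)$ via Lemma~\ref{lp-approx} with $p=1$, discard the $\frac{i}{N}\mathbf{1}_{s_0}$ contribution, arrive at $|(\tilde{e},f_k)|^2 \simeq g_0(\operatorname{st}(k/N))/N$, and then sum. You also correctly identify the crux: the pointwise infinitesimal estimate is not enough, since one sums on the order of $N(b-a)$ terms, so a uniform control is required.

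Where you and the paper diverge is precisely in how this uniformity is obtained, and here your two proposed fixes are not on equal footing. Your option (a) --- a uniform quadrature error from smoothness --- is essentially the right idea, though the inner sum is a left-endpoint rule, not a midpoint rule, so the natural bound on $|\tilde{h}_{k/N}-\hat{e}(k/N)|$ is $O(1/N)$ rather than $O(1/N^2)$; this still suffices after dividing by $\sqrt{2N}$ and squaring. The paper takes a cleaner route: it proves that $\omega\mapsto\tilde{h}_\omega$ (the inner sum as a function of the frequency) is Lipschitz with a \emph{standard} constant $C$, directly from $\frac{1}{N}\sum_l|l/N|\,{}^*e(l/N)$ being nearstandard by Lemma~\ref{lp-approx}. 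Combined with $\operatorname{st}(\tilde{h}_\omega)=\sqrt{2g_0(\omega)}$ for standard $\omega$, this Lipschitz bound gives $\tilde{h}_\omega\simeq\sqrt{2g_0(\omega)}$ uniformly over limited $\omega$ (a finite cover of $[a,b]$ by balls of standard radius $\epsilon/2C$ does it), so the Riemann sum goes through. Your option (b) --- Parseval plus S-integrability --- would not close the gap by itself: it shows total mass and tail mass behave correctly, but identifying the distribution on a fixed $[a,b[$ with $g_0\,d\mu$ still needs the uniform local estimate, so you'd be reaching back for option (a) anyway. The treatment of $\Omega_\mathbb{R}$ in your last paragraph matches the paper.
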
 
\begin{proof}
We have $\Omega_{\mathbb{R}}=\bigcup_{n\in\mathbb{N}}\tilde{\Omega}_{-n}^n$, thus $\Omega_{\mathbb{R}}\in\mathcal{A}_L$. Furthermore, for any $a<b\in\mathbb{R}$,

\begin{align*}
	\operatorname{st}(\tilde{\mu}(\tilde{\Omega}_a^b))&=\operatorname{st}(\|\operatorname{proj}_V\tilde{e}\|^2)\\
	&=\operatorname{st}\left(\sum_{f_k\in\tilde{\Omega}_a^b}|(\tilde{e},f_k)|^2\right)\\
	&=\operatorname{st}\left(\sum_{Na\leq k < Nb}\left|\frac{1}{N\sqrt{2N}}\left(\frac{i}{N}+\sum_{l=-N_1}^{N_1}{}^*e(\frac{l}{N})\exp(-2\pi i\frac{kl}{2N^2}) \right)\right|^2 \right)\\
	&=\operatorname{st}\left( \frac{1}{2N^3}\sum_{Na\leq k < Nb}\left(\frac{1}{N^2}+\left|\sum_{l=-N_1}^{N_1}{}^*e(\frac{l}{N})\exp(-2\pi i\frac{kl}{2N^2}) \right|^2\right) \right)\\
	&=\operatorname{st}\left(\frac{\lceil Nb\rceil-\lceil Na\rceil}{2N^5}+\frac{1}{2N}\sum_{Na\leq k < Nb}\left|\frac{1}{N}\sum_{l=-N_1}^{N_1}{}^*e(\frac{l}{N})\exp(-2\pi i\frac{kl}{2N^2}) \right|^2 \right)\\
	&=\frac{1}{2}\operatorname{st}\left(\frac{1}{N}\sum_{Na\leq k < Nb}\left|\frac{1}{N}\sum_{l=-N_1}^{N_1}{}^*e(\frac{l}{N})\exp(-2\pi i\frac{kl}{2N^2}) \right|^2 \right).
\end{align*}

We first look at the interior sum. For $\omega\in{}^*\mathbb{R}$, we define:

\begin{align*}
	\tilde{h}_{\omega}:&=\frac{1}{N}\sum_{l=-N_1}^{N_1}{}^*e(\frac{l}{N})\exp(-2\pi i\omega\frac{l}{2N})\\
	&={}^*\int_{{}^*\mathbb{R}}\left(\sum_{l=-N_1}^{N_1}{}^*e(\frac{l}{N})\exp(-2\pi i\omega\frac{l}{2N})\mathbf{1}_{s_l}\right)d({}^*\mu).
\end{align*}

We study the particular case of standard $\omega\in\mathbb{R}$. Here, we can use Lemma~\ref{lp-approx} with $p=1$, using the function $t\rightarrow e(t)\exp(-\pi i\omega t)$. Indeed, that function is continuous, integrable and $|e(t)\exp(-\pi i\omega t)|=e(t)$ is monotonic with respect to $|t|$, as we have seen before. Therefore, since $f\rightarrow\int_{\mathbb{R}}fd\mu$ is continuous on $L_1(\mathbb{R})$, we get, whenever $\omega$ is a standard real:

\begin{align*}
	\operatorname{st}(\tilde{h}_{\omega})=\int_{\mathbb{R}}e(t)\exp(-\pi i\omega t)d\mu(t)=\left(\frac{2}{\pi}\right)^{\frac{1}{4}}\int_{\mathbb{R}}e^{-\pi i \omega t}e^{-t^2}d\mu(t).
\end{align*}

We recognize the formula of the Fourier transform, which will be discussed more  in detail in the next subsections. Through complex analysis, we can calculate the integral, which results to, for standard $\omega$,
\begin{align*}
	\operatorname{st}(\tilde{h}_\omega)=(2\pi)^{\frac{1}{4}}\exp\left(-\frac{\pi^2\omega^2}{4}\right)= \sqrt{2g_0(\omega)}.
\end{align*}
We note, from inspection, that $\omega\rightarrow \operatorname{st}(\tilde{h}_\omega)$ is continuous.

Then, for $\omega_1$, $\omega_2\in{}^*\mathbb{R}$, we have:

\begin{align*}
	|\tilde{h}_{\omega_2}-\tilde{h}_{\omega_1}|&\leq \frac{1}{N}\sum_{l=-N_1}^{N_1}\left|{}^*e(\frac{l}{N})\exp(-2\pi i\omega_2\frac{l}{2N})-{}^*e(\frac{l}{N})\exp(-2\pi i\omega_1\frac{l}{2N}) \right|\\
	&\leq\frac{1}{N}\sum_{l=-N_1}^{N_1}2\pi\left|\frac{l}{2N}\right||\omega_2-\omega_1|{}^*e\left(\frac{l}{N}\right)\\
	&=\pi|\omega_2-\omega_1|{}^*\int_{{}^*\mathbb{R}}\left(\sum_{l=-N_1}^{N_1}\left|\frac{l}{N}\right|{}^*e(\frac{l}{N})\mathbf{1}_{s_l}\right)d({}^*\mu)\\
	&=\pi|\omega_2-\omega_1|\left\|\sum_{l=-N_1}^{N_1}\frac{l}{N}{}^*e(\frac{l}{N})\mathbf{1}_{s_l} \right\|_{L_1}.	
\end{align*}
We can use Lemma~\ref{lp-approx}, with $p=1$, on the function $t\rightarrow te(t)$, as it is continuous, in $L_1$ and decreasing at infinity. Thus $\sum_{l=-N_1}^{N_1}\frac{l}{N}{}^*e(\frac{l}{N})\mathbf{1}_{s_l}$ is nearstandard in $L_1$. From that, we conclude that there exists a standard real $C$ such that for any $\omega_1,\omega_2\in{}^*\mathbb{R}$, $|\tilde{h}_{\omega_2}-\tilde{h}_{\omega_1}|\leq C|\omega_2-\omega_1|$.

And so, whenever $\omega\in{}^*\mathbb{R}$ is limited, we have $\operatorname{st}(\tilde{h}_{\omega})=\operatorname{st}(\tilde{h}_{\operatorname{st}(\omega)})=\sqrt{2g_0(\operatorname{st}(\omega))}$.

We can now return to $\operatorname{st}(\tilde{\mu}(\tilde{\Omega}_a^b))=\frac{1}{2}\operatorname{st}\left(\frac{1}{N}\sum_{Na\leq k < Nb}|\tilde{h}_{\frac{k}{N}}|^2 \right)$. For any $k\in[Na,Nb[$, we have that 
\begin{align*}
	|\tilde{h}_{\frac{k}{N}}|^2&\simeq|(2\pi)^{\frac{1}{4}}\exp(-\frac{\pi^2(\operatorname{st}(\frac{k}{N}))^2}{4})|^2\\
	&\simeq |(2\pi)^{\frac{1}{4}}\exp(-\frac{\pi^2(\frac{k}{N})^2}{4})|^2\\
	&=\sqrt{2\pi}\exp(-\frac{\pi^2(\frac{k}{N})^2}{2}).
\end{align*}
Since $\frac{1}{N}\sum_{Na\leq k < Nb}1=\frac{\lceil Nb\rceil-\lceil Na\rceil}{N}<b-a+1$ is limited, we have, recognizing a Riemann sum:
\begin{align*}
	\operatorname{st}(\tilde{\mu}(\tilde{\Omega}_a^b))&=\frac{1}{2}\operatorname{st}\left(\frac{1}{N}\sum_{Na\leq k<Nb}\sqrt{2\pi}\exp(-\frac{\pi^2(\frac{k}{N})^2}{2})\right)\\
	&=\sqrt{\frac{\pi}{2}}\int_a^b\exp(-\frac{\pi^2\omega^2}{2})d\omega=\int_{[a,b]}g_0d\mu\\
	\implies \mu_L(\Omega_{\mathbb{R}})&=\lim_{n\rightarrow\infty}\operatorname{st}(\tilde{\mu}(\tilde{\Omega}_{-n}^n))=\sqrt{\frac{\pi}{2}}\int_{-\infty}^{\infty}\exp(-\frac{\pi^2\omega^2}{2})d\omega=1.
\end{align*}
\end{proof}
\begin{remark}
	In this proof, we have also shown  that for any $f_k\in\Omega_{\mathbb{R}}$ and limited hyperreal $\omega$,
		\begin{align*}
		 	\frac{1}{N}\sum_{l=-N_1}^{N_1}{}^*e(\frac{l}{N})\exp(-2\pi i\omega\frac{l}{2N})\simeq \sqrt{2g_0(\operatorname{st}(\omega))}.
		 \end{align*}
\end{remark}
Now, $U_L(e_j)$ simplifies a lot on $\Omega_{\mathbb{R}}$. If $f_k\in\Omega_{\mathbb{R}}$, we have:

\begin{align*}
	\sqrt{2N}(\tilde{e},f_k)&=\frac{i}{N^2}+\frac{1}{N}\sum_{l=-N_1}^{N_1}{}*e(\frac{l}{N})\exp(-2\pi i\frac{kl}{2N^2})\simeq \sqrt{2g_0(\operatorname{st}(\frac{k}{N}))}\in \mathbb{R}_{>0}
\end{align*}
and so
\begin{align*}
	 \frac{(\tilde{e},f_k)}{|(\tilde{e},f_k)|}&=\frac{\sqrt{2N}(\tilde{e},f_k)}{|\sqrt{2N}(\tilde{e},f_k)|}\simeq 1.
\end{align*}

Thus, for any $j\in\mathbb{N}$ and $f_k\in\Omega_{\mathbb{R}}$, we have:
\begin{align*}
	\operatorname{st}\left((\tilde{U}(\tilde{e}_j))(f_k)\right)=\operatorname{st}\left(\exp(-2\pi i\frac{k_j k}{2N^2})\frac{(\tilde{e},f_k)}{|(\tilde{e},f_k)|/\|\tilde{e}\|}\right)=\exp\left(2\pi i \frac{-q_j}{2}  \operatorname{st}(\frac{k}{N})\right).
\end{align*}

Therefore, for any $f_{k_1}, f_{k_2}\in \Omega_{\mathbb{R}}$, we have:

\begin{align*}
	\operatorname{st}(\tilde{d}(f_{k_1},f_{k_2}))=0&\iff \forall q\in\mathbb{Q},  \exp(2\pi i q\operatorname{st}(\frac{k_1}{N}))=\exp(2\pi i q\operatorname{st}(\frac{k_2}{N}))\\
	&\iff \forall n\in\mathbb{N},  \exp(2\pi i \frac{1}{n}\operatorname{st}(\frac{k_1-k_2}{N}))=1\\
	&\iff \forall n\in \mathbb{N}, \operatorname{st}(\frac{k_1-k_2}{N})\in n\mathbb{Z}\\
	&\iff \operatorname{st}(\frac{k_1-k_2}{N})=0\iff \operatorname{st}(\frac{k_1}{N})=\operatorname{st}(\frac{k_2}{N}).
\end{align*}

Also, we find $\tilde{\lambda}_{f_k}=N\sin(\pi\frac{k}{N^2})\simeq \pi\operatorname{st}(\frac{k}{N})$ for any such $f_k\in\Omega_{\mathbb{R}}$. And so, for almost all $f_k$ in $\Omega_L$, $m_L(f_k)= \pi\operatorname{st}(\frac{k}{N})$.

\subsection{Nature of the hull space}

For $a,b\in\mathbb{R}$, we define $\hat{\Omega}_a^b:=\hat{\nu}(\tilde{\Omega}_a^b)$. By a saturation argument, we know that since $\tilde{\Omega}_a^b$ is internal, $\hat{\Omega}_a^b$ is closed in $(\hat{\Omega},\hat{d})$, and so measurable. Furthermore, defining $\hat{\Omega}_{\mathbb{R}}:=\hat{\nu}(\Omega_{\mathbb{R}})=\cup_{n\in\mathbb{N}}\hat{\Omega}_{-n}^n$, we have that $\hat{\Omega}_{\mathbb{R}}\in\hat{\mathcal{A}}$  and $\hat{\mu}(\hat{\Omega}_{\mathbb{R}})=1$.

The previous results indicate a unique well-defined bijection $\phi:\mathbb{R}\rightarrow\hat{\Omega}_{\mathbb{R}}$ with $\phi(\operatorname{st}(\frac{k}{N}))=\hat{\nu}(f_k)$ for any $f_k\in\Omega_{\mathbb{R}}$. We show both $\phi$ and $\phi^{-1}$ are measurable.

We have, for $j\in\mathbb{N}$ and $t\in\mathbb{R}$, $(\hat{U}(e_j))(\phi(t))=\exp\left(2\pi i \frac{-q_j}{2}t\right)$. Therefore, for any $t_1,t_2\in\mathbb{R}$, we have:

\begin{align*}
	\hat{d}(\phi(t_1),\phi(t_2))&=\sum_{j\in\mathbb{N}}2^{-\frac{3j}{2}}\left|\exp(2\pi i\frac{-q_j}{2}t_1)-\exp(2\pi i\frac{-q_j}{2}t_2)\right|\\
	&=\sum_{j\in\mathbb{N}}2^{-\frac{3j}{2}}\left|\exp(2\pi i\frac{-q_j}{2}(t_1-t_2))-1\right|.
\end{align*}

By uniform convergence properties, we find that $\phi$ is continuous (usual distance in $\mathbb{R}$). Therefore, $\phi$ is measurable. Furthermore, if $F\subset\mathbb{R}$ is closed, then for any $n\in\mathbb{N}$, we have $F\cap [-n,n]$ is compact, and so $\phi(F\cap [-n,n])$ is compact in $\hat{\Omega}$. Therefore, $\phi(F)=\cup_{n\in\mathbb{N}}\phi(F\cap [-n,n])\in\hat{\mathcal{A}}$. We conclude $\phi^{-1}$ is measurable.

Next, we consider the pushforward measure on $\mathbb{R}$ $\mu':=(\phi^{-1})_*\hat{\mu}$. We know that $\mathcal{J}:L_2(\hat{\Omega},\hat{\mu})\rightarrow L_2(\mathbb{R},\mu')$ with $\mathcal{J}(f)=f\circ\phi$ is a unitary map. Let $U:=\mathcal{J}\circ\hat{U}:H\rightarrow  L_2(\mathbb{R},\mu')$.

We find that for any $a, b\in\mathbb{R}$, $\mu'([a,b])=\hat{\mu}(\phi([a,b]))=\hat{\mu}(\hat{\Omega}_a^b)=\mu_L(\tilde{\Omega}_a^b)=\int_{[a,b]}g_0d\mu$ . And so, $d\mu'=g_0d\mu$. Since $\int_{\mathbb{R}}|f|^2d\mu'=\int_{\mathbb{R}}|f|^2 g_0d\mu$, we have that $f\in L_2(\mathbb{R},\mu')\iff f\sqrt{g_0}\in L_2(\mathbb{R},\mu)$. We can now show the following:

\begin{prop}
	Both $U$ and $\hat{U}$ are surjective.
\end{prop}
  
\begin{proof}
We know that for any $j\in\mathbb{N}$, $(U(e_j))(\omega)=\exp(2\pi i\frac{-q_j}{2}\omega)$ $\mu'$-almost everywhere. Therefore, for $y\in U(H)^\perp\subset L_2(\mathbb{R},\mu')$, we have, for any $q\in\mathbb{Q}$:

\begin{align*}
	0&=\int_{\mathbb{R}}y(\omega)\exp(-2\pi i q \omega)d\mu'(\omega)\\
	&=\int_{\mathbb{R}} y(\omega)\exp(-2\pi i q \omega) g_0(\omega)d\mu(\omega)\\
	&=(y\sqrt{g_0},\exp(2\pi i q\boldsymbol{\cdot})\sqrt{g_0})_{L_2(\mathbb{R},\mu)}.
\end{align*}

And so, $y\sqrt{g_0} \in \overline{\operatorname{span}(\{\exp(2\pi i q(\boldsymbol{\cdot}))\exp(-\frac{\pi^2(\boldsymbol{\cdot})^2}{4})\}_{q\in\mathbb{Q}})}^{\perp}=\{0\}$. Indeed, we have seen earlier that functions of that form span a dense subset of $L_2(\mathbb{R},\mu)$. Since $g_0$ is nowhere $0$, we have $y=0$. Therefore, $U(H)=L_2(\mathbb{R},\mu')$. Thus, $U$ and $\hat{U}$ are surjective.
\end{proof}
\subsection{Relation to the Fourier transform}

We can say a few more interesting facts about this example. First, suppose $h\in L_2(\mathbb{R},\mu)\cap L_1(\mathbb{R},\mu)\subset H$. Then, for any $f_k\in \tilde{\Omega}$, we have
\begin{align*}
	(\tilde{U}({}^*h))(f_k)&=\frac{({}^*h,f_k)}{\sqrt{\tilde{\mu}(f_k)}}=\frac{({}^*h,\sqrt{2N}f_k)}{|\sqrt{2N}(\tilde{e},f_k)|}\\
	&= \frac{1}{|\sqrt{2N}(\tilde{e},f_k)|}({}^*h,\sum_{l=-N^2}^{N^2-1}\exp(\pi i \frac{k}{N}\frac{l}{N})\mathbf{1}_{s_l}).
\end{align*}

For a given standard $\omega\in\mathbb{R}$, if we define $(\psi_n)_{n\in\mathbb{N}}^{(\omega)}$ a sequence of simple functions given by $\psi_n^{(\omega)}=\sum_{l=-n^2}^{n^2-1}\exp(\pi i \omega\frac{l}{n})\mathbf{1}_{[\frac{l}{n},\frac{l+1}{n}[}$, then we know $\psi_n^{(\omega)}$ converges normally (i.e. uniformly on compacts) to $t\rightarrow\exp(\pi i \omega t)$, while being dominated by $1$. Therefore, by the dominated convergence theorem, the sequence $\overline{\psi_n^{(\omega)}} h$ converges to $t\rightarrow \exp(-\pi i \omega t)h(t)$ in $L_1(\mathbb{R})$. And so, $\lim_{n\rightarrow\infty}(h,\psi_n^{(\omega)})=\lim_{n\rightarrow\infty}\int_{\mathbb{R}}h\overline{\psi_n^{(\omega)}}d\mu=\int_{\mathbb{R}}\exp(-\pi i \omega t)h(t)d\mu(t)=(\mathcal{F}(h))(\frac{\omega}{2})$, where $\mathcal{F}$ is the Fourier transform.

Furthermore, using that $|h|$ is integrable, we can show that for any $f_k\in\Omega_{\mathbb{R}}$:
\begin{align*}
	({}^*h,\sum_{l=-N^2}^{N^2-1}\exp(\pi i \frac{k}{N}\frac{l}{N})\mathbf{1}_{s_l})&\simeq ({}^*h,\sum_{l=-N^2}^{N^2-1}\exp(\pi i \operatorname{st}(\frac{k}{N})\frac{l}{N})\mathbf{1}_{s_l})\\
	&=({}^*h,{}^*\psi_N^{(\operatorname{st}(\frac{k}{N}))})\simeq (\mathcal{F}(h))(\frac{1}{2}\operatorname{st}(\frac{k}{N})),
\end{align*}
and so,
\begin{align*}
	\operatorname{st}\left( (\tilde{U}({}^*h))(f_k)\right)&=\frac{(\mathcal{F}(h))(\frac{1}{2}\operatorname{st}(\frac{k}{N}))}{(\mathcal{F}(e))(\frac{1}{2}\operatorname{st}(\frac{k}{N}))}.
\end{align*}

We note $\sqrt{2g_0(\omega)}=(\mathcal{F}(e))(\frac{1}{2}\omega))$ for any real $\omega$. Therefore, we have that, almost everywhere, $(U(h))(\omega)=\frac{(\mathcal{F}(h))(\frac{\omega}{2})}{(\mathcal{F}(e))(\frac{\omega}{2})}$. This could be an effective definition of the Fourier transform $\mathcal{F}:L_2(\mathbb{R},\mu)\rightarrow L_2(\mathbb{R},\mu)$:
\begin{align*}
	(\mathcal{F}(h))(\omega)=(U(h))(2\omega)\cdot \sqrt{2g_0(2\omega)}=(2\pi)^{\frac{1}{4}}(U(h))(2\omega)\exp(-\pi^2\omega^2).
\end{align*}

This can be used directly to recover Plancherel's identity for such $h$:
\begin{align*}
	\int_{\mathbb{R}} |\mathcal{F}(h)|^2d\mu &=\frac{1}{2}\int_{\mathbb{R}} |(\mathcal{F}(h))(\frac{\omega}{2})|^2d\mu(\omega)\\
	&=\int_{\mathbb{R}}\frac{\left|(\mathcal{F}(h))(\frac{\omega}{2})\right|}{2g_0(\omega)}^2d\mu'(\omega)\\
	&=\|U(h)\|^2=\|h\|^2=\int_{\mathbb{R}}|h|^2d\mu.	
\end{align*}

We can also recover the differentiation formula using $U$. We have that for $f_k\in\Omega_{\mathbb{R}}$, $m_L(f_k)=\pi\operatorname{st}(\frac{k}{N})$, and so for any $h\in\operatorname{dom}(A)$ and for almost every $\omega\in\mathbb{R}$, $(U(Ah))(\omega)=\pi \omega (U(h))(\omega)$. Since for such $h\in\operatorname{dom}(A)$ both $h$ and $h'$ are in $L_1(\mathbb{R},\mu)\cap L_2(\mathbb{R},\mu)$ and $h'=iAh$, we conclude $(\mathcal{F}(h'))(\omega)=2\pi i \omega(\mathcal{F}(h))(\omega)$. That formula then works for every $h\in\operatorname{dom}(\overline{A})=H^1(\mathbb{R})$, the Sobolev space, as any sampling for $A$ is a sampling for $\overline{A}$.

As with Section \ref{section_shift}, the surprising part is not that the Fourier transform works, as it is widely known to be the natural unitary equivalence when it comes to $\frac{d}{dx}$. What seems quite interesting is that
it appears here, using a general method with a chosen sampling and scale. This appearance may be even more peculiar here compared to the previous section, because the method of this paper always ends with a probability space, while $(\mathbb{R},\operatorname{Borel}(\mathbb{R}),\mu)$ is not one.

Given the examples of Sections \ref{section_sa}, \ref{section_shift} and \ref{section_differential}, it would seem that this method can generate an explicit, natural unitary equivalence depending on how the chosen sampling and scale interact, and if those interactions are themselves explicit. It could be interesting to consider other examples, or try to generalize beyond symmetric operators.
\appendix
\section{Induced spectral resolution}\label{appendix_pullback}

The goal here is to use Theorem~\ref{theorem_spectral_symmetric} to prove the spectral resolution theorem:

\begin{theorem}[Spectral resolution theorem]
	For any self-adjoint operator $A$ on $H$, there exists a projection-valued measure $P$ on $\operatorname{Borel}(\mathbb{R})$ such that 
	\begin{align*}
		A=\int_{\mathbb{R}}\operatorname{id} dP.
	\end{align*}
\end{theorem}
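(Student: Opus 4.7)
The plan is to invoke Theorem~\ref{theorem_spectral_symmetric} for the self-adjoint $A$ to obtain a probability space $(\Omega,\mu)$, a Borel function $m:\Omega\to\mathbb{R}$, the associated self-adjoint multiplication operator $T$ on $L_2(\Omega,\mu)$, and an isometry $U:H\to L_2(\Omega,\mu)$ such that $U\circ A\leq T\circ U$. The operator $T$ already admits the obvious projection-valued measure $E(B):=M_{\mathbf{1}_{m^{-1}(B)}}$ on $L_2(\Omega,\mu)$, and the idea is to conjugate it through $U$. Writing $U^{*}:L_2(\Omega,\mu)\to H$ for the Hilbert-space adjoint of $U$ (so that $U^{*}U=\operatorname{id}_{H}$ while $UU^{*}$ is the orthogonal projection onto $U(H)$), I would set $P(B):=U^{*}E(B)U$ and check that this inherits the required structure.

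The key point is that the closed subspace $U(H)$ is invariant under every $E(B)$, since the idempotency and multiplicativity of $P$ reduce exactly to this. I would first handle the resolvents: since $A$ is self-adjoint, $A-\lambda:\operatorname{dom}(A)\to H$ is a bijection for every $\lambda\in\mathbb{C}\setminus\mathbb{R}$, so given $y=Ux\in U(H)$ the vector $x':=(A-\lambda)^{-1}x\in\operatorname{dom}(A)$ satisfies $(T-\lambda)Ux'=U(A-\lambda)x'=y$ by $U\circ A\leq T\circ U$, whence $(T-\lambda)^{-1}y=Ux'\in U(H)$. The passage from resolvents to arbitrary bounded Borel functions of $T$, and in particular to $E(B)$, then follows from standard functional-calculus machinery: for instance, $(T-i)(T+i)^{-1}$ is a unitary operator preserving $U(H)$, and every $E(B)$ lies in the strong closure of the $*$-algebra it generates.

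With invariance in hand, the PVM axioms for $P$ become routine: the relation $UU^{*}E(B)U=E(B)U$, valid whenever $E(B)U(H)\subset U(H)$, collapses $P(B_1)P(B_2)=U^{*}E(B_1)UU^{*}E(B_2)U$ to $U^{*}E(B_1\cap B_2)U=P(B_1\cap B_2)$ and $P(B)^{2}$ to $P(B)$, while $\sigma$-additivity, self-adjointness and normalization transfer from $E$ by linearity and strong continuity. For the identification $A=\int_{\mathbb{R}}\operatorname{id}\,dP$, the spectral measure $\mu^{P}_{x,x}(V)=(E(V)Ux,Ux)_{L_2}$ matches the $T$-spectral measure at $Ux$, making it immediate that $\operatorname{dom}(\int\operatorname{id}\,dP)=\{x\in H:Ux\in\operatorname{dom}(T)\}$ and that on this domain $\int\operatorname{id}\,dP=U^{*}TU=:B$. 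The extension $U\circ A\leq T\circ U$ gives $A\leq B$, and $B$ is symmetric by self-adjointness of $T$, so $B\leq B^{*}\leq A^{*}=A$, forcing $A=B$ by maximal symmetry.

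I expect the main obstacle to be the passage from resolvent invariance to $E(B)$-invariance of $U(H)$. The statement is classical, but choosing the cleanest vehicle --- whether the Cayley transform, Stone's formula, or a double commutant argument --- and checking that the ambient closed subspace $U(H)$ is preserved at every step requires some bookkeeping; everything else is essentially manipulation of the adjoint $U^{*}$.
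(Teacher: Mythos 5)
Your overall strategy matches the paper's exactly: conjugate the obvious PVM $E(B)=M_{\mathbf{1}_{m^{-1}(B)}}$ on $L_2(\Omega,\mu)$ through $U$ by setting $P(B):=U^*E(B)U$, with the crux being that $U(H)$ is invariant under every $E(B)$. Your closing argument for $A=\int\operatorname{id}\,dP$ via $A\leq B\leq B^*\leq A^*=A$ is also correct, and in fact slicker than the paper's direct computation of $(\int\operatorname{id}\,dP\,x,y)=(Ax,y)$.

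However, your chosen vehicle for the invariance step has a genuine gap. You pass through resolvents $(T-\lambda)^{-1}$ for $\lambda\in\mathbb{C}\setminus\mathbb{R}$ and the Cayley transform $(T-i)(T+i)^{-1}$, both of which presuppose complex scalars. Throughout, the paper works over $\mathbb{K}\in\{\mathbb{R},\mathbb{C}\}$, and when $\mathbb{K}=\mathbb{R}$ operators like $T-iI$ and $A-\lambda$ with $\lambda\notin\mathbb{R}$ are simply not available on $H$ or $L_2(\Omega,\mu)$. This is precisely the issue the paper's remark at the start of Appendix~\ref{appendix_pullback} flags (``excluding the direct use of operators such as $T-tiI$''). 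The paper's substitute is entirely real: it first proves (Proposition~\ref{sa_reduction}) that $U(H)$ reduces for $T$ and that $U\circ A=T\circ U$ by pure adjoint bookkeeping, then, given $f\in U(H)$, truncates $f_k'=\mathbf{1}_{m^{-1}([-k,k])}f$ so that $f_k'\in\operatorname{dom}(T^n)$ for all $n$, shows $(p\circ m)\cdot \operatorname{proj}_{U(H)}f_k'\in U(H)$ for every real polynomial $p$, and finishes via Stone--Weierstrass and dominated convergence to reach $\mathbf{1}_{[a,b]}\circ m$. If you restrict attention to complex Hilbert spaces your route goes through and is classical; to cover the real case you would need to either complexify (and verify that all the structure transports) or replace resolvents with a real polynomial/functional-calculus argument as the paper does.
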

\begin{remark}
	The proof outlined here will resemble part of the work done in \cite{GOLDBRING2021590} (and to an extent \cite{bernstein_spectral} and \cite{mooreboundedops}), whereas a suitable spectral measure is pulled from a bigger space through a natural embedding of $H$. In those works, the bigger space was the nonstandard hull, while here it will the one provided by Theorem~\ref{theorem_spectral_symmetric}. The idea of how will also be similar, but some of the details can differ, notably due to the fact that the Hilbert space might be real, excluding the direct use of operators such as $T-tiI$.
	
	Finally, we remark that no nonstandard analysis will be used in this section, and ${}^*$ will be used for the adjoint of a map.
\end{remark}

Here, we will assume that $A$ is a densely-defined self-adjoint operator on $H$. Furthermore, $(\Omega,\mathcal{A},\mu)$ is a probability space, $m:\Omega\rightarrow\mathbb{R}$ is a measurable function inducing self-adjoint multiplication operator $T$, and $U:H\rightarrow L_2(\Omega,\mu)$ is an isometry such that $U\circ A \leq T\circ U$. The one from Theorem~\ref{theorem_spectral_symmetric} works, or even the one from Theorem~\ref{theorem_spectral_loebspace}. We also note, for measurable $f:\Omega\rightarrow\mathbb{K}$, the induced multiplication operator $T_f$.

We start with the following proposition.
\begin{prop}\label{sa_reduction}
	We have that $U(H)$ reduces for $T$. Furthermore, $U\circ A=T\circ U$. 
\end{prop}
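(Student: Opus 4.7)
The plan is to show that $V := U(H)$ reduces $T$ by verifying that the orthogonal projection $P_V$ preserves $\operatorname{dom}(T)$ and commutes with $T$ on this domain. I begin with an easier observation: $V^\perp$ is $T$-invariant, in the sense that $T(V^\perp \cap \operatorname{dom}(T)) \subseteq V^\perp$. Indeed, for any $v \in V^\perp \cap \operatorname{dom}(T)$ and any $x \in \operatorname{dom}(A)$, self-adjointness of $T$ together with the hypothesis $U \circ A \leq T \circ U$ gives $(Tv, Ux) = (v, T(Ux)) = (v, U(Ax)) = 0$, and density of $\operatorname{dom}(A)$ in $H$ extends this orthogonality to all of $V$.

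The heart of the proof is to show that for any $u \in \operatorname{dom}(T)$, writing the orthogonal decomposition $u = u_1 + u_2$ with $u_1 = P_V u$ and $u_2 \in V^\perp$, one has $u_1 \in \operatorname{dom}(T)$. Since $U$ is an isometric bijection from $H$ onto $V$, there is a unique $x_1 \in H$ with $u_1 = Ux_1$, and the hypothesis says $u_1 = Ux_1 \in \operatorname{dom}(T)$ whenever $x_1 \in \operatorname{dom}(A)$. So it suffices to show $x_1 \in \operatorname{dom}(A) = \operatorname{dom}(A^*)$, which I verify via the adjoint criterion. For any $z \in \operatorname{dom}(A)$,
\begin{align*}
(x_1, Az) &= (Ux_1, U(Az)) = (u_1, T(Uz)) = (u, T(Uz)) - (u_2, T(Uz)).
\end{align*}
The last term vanishes since $u_2 \in V^\perp$ while $T(Uz) = U(Az) \in V$, and the first equals $(Tu, Uz)$ by self-adjointness of $T$. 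Hence $|(x_1, Az)| \leq \|Tu\|\cdot\|z\|$ uniformly in $z \in \operatorname{dom}(A)$, so $x_1 \in \operatorname{dom}(A^*) = \operatorname{dom}(A)$ as desired.

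With $u_1 \in \operatorname{dom}(T)$ (hence also $u_2 = u - u_1 \in \operatorname{dom}(T)$), we have $Tu_1 = U(Ax_1) \in V$ while $Tu_2 \in V^\perp$ by the opening observation; thus $P_V Tu = Tu_1 = T P_V u$, establishing that $V$ reduces $T$. The equality $U \circ A = T \circ U$ then falls out as a special case: for $x \in H$ with $Ux \in \operatorname{dom}(T)$, taking $u := Ux$ above yields $u_1 = u$ and $x_1 = x$, so $x \in \operatorname{dom}(A)$ and $T(Ux) = U(Ax)$ by hypothesis, which is precisely the reverse inclusion needed.

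The main obstacle is isolating where self-adjointness of $A$ enters (as opposed to mere symmetry). The argument sidesteps any attempt to establish separately that $T$ restricts to a self-adjoint operator on $V$ and on $V^\perp$, which would in particular require knowing that $V^\perp \cap \operatorname{dom}(T)$ is dense in $V^\perp$. Instead, self-adjointness of $A$ is invoked exactly once, through the equality $\operatorname{dom}(A) = \operatorname{dom}(A^*)$ applied to the vector $x_1$, and the argument works uniformly whether $\mathbb{K} = \mathbb{R}$ or $\mathbb{C}$, with no appeal to resolvents or the Cayley transform.
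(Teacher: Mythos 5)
Your proof is correct and follows essentially the same route as the paper: both identify $x_1 = U^*u$ (via $\operatorname{proj}_{U(H)} = UU^*$), show it lies in $\operatorname{dom}(A^*) = \operatorname{dom}(A)$ through the adjoint criterion, and then use the hypothesis $U\circ A \leq T\circ U$ to conclude that $\operatorname{proj}_{U(H)}$ commutes with $T$ on $\operatorname{dom}(T)$. The only cosmetic differences are that you extract a preliminary $T$-invariance lemma for $V^\perp$ (which the paper folds into a direct computation with $UU^*$) and that you derive $U\circ A = T\circ U$ as a special case of the reduction argument rather than via the paper's separate, but structurally parallel, boundedness estimate.
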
	
\begin{proof}
	We first show that $U(H)$ reduces for $T$. By definition, we must show $\operatorname{proj}_{U(H)}\circ T\leq T\circ \operatorname{proj}_{U(H)}$. Let $x\in\operatorname{dom}(T)$. We first note $U^*x\in\operatorname{dom}(A)$. Indeed, for $y\in\operatorname{dom}(A)$, we have:
	\begin{align*}
		(Ay,U^*x)&=(U(Ay),x)=(T(Uy),x)=(Uy,Tx)=(y,U^*(Tx)).
	\end{align*}
	Thus, $U^*x\in\operatorname{dom}(A^*)=\operatorname{dom}(A)$, and $A(U^*x)=A^*(U^*x)=U^*(Tx)$. We note that $(UU^*y,Uz)=(y,Uz)$ holds for any $y\in L_2(\Omega)$ and $z\in H$, and so $\operatorname{proj}_{U(H)}=U U^*$. Using $U\circ A\leq T\circ U$, we get that $\operatorname{proj}_{U(H)}x\in\operatorname{dom}(T)$, and $T(\operatorname{proj}_{U(H)}x)=U(A U^*x)=UU^*(Tx)=\operatorname{proj}_{U(H)}Tx$. Since $x\in\operatorname{dom}(T)$ is arbitrary, we have $\operatorname{proj}_{U(H)}\circ T\leq T\circ\operatorname{proj}_{U(H)} $, and so $U(H)$ reduces for $T$.
	
	To prove $U\circ A= T\circ U$, it is sufficient to show $U^{-1}(\operatorname{dom}(T))\subset\operatorname{dom}(U\circ A)$. But for $x\in H$ such that $U(x)\in\operatorname{dom}(T)$ and $y\in\operatorname{dom}(A)$, we have
	\begin{align*}
		|(Ay,x)|&=|(UAy,Ux)|=|(TUy,Ux)|=|(Uy,TUx)|\leq\|y\|\|TUx\|.
	\end{align*}
	
	Therefore, $y\rightarrow (Ay,x)$ is a bounded map on $\operatorname{dom}(A)$, and since $A$ is self-adjoint, $x\in\operatorname{dom}(A^*)=\operatorname{dom}(U\circ A)$. Since $x\in U^{-1}(\operatorname{dom}(T))$ is arbitrary, we conclude $U\circ A= T\circ U$.
\end{proof}

We now consider the projection-valued measure $\hat{P}$ on $L_2(\Omega)$ given by $\hat{P}(V)=T_{\mathbf{1}_{m^{-1}(V)}}$ for any $V\in\operatorname{Borel}(\mathbb{R})$, so that $\hat{P}(V) f=\mathbf{1}_{m^{-1}(V)}\cdot f$ for any $f\in L_2(\Omega)$. It is straightforward to verify that $\hat{P}$ forms a projection-valued measure. Furthermore, for any $f\in L_2(\Omega)$ and $V\in\operatorname{Borel}(\mathbb{R})$:
\begin{align*}
	\int_{\mathbb{R}}\mathbf{1}_Vd\hat{\mu}_{f,f}&=\hat{\mu}_{f,f}(V)=(\hat{P}(V)f,f)=\int_{\Omega}\mathbf{1}_{m^{-1}(V)}|f|^2d\mu=\int_{\Omega}(\mathbf{1}_V\circ m)|f|^2d\mu\\
	\implies& \int_\mathbb{R}|\operatorname{id}|^2d\hat{\mu}_{f,f}=\int_{\Omega}(|\operatorname{id}|^2\circ m)|f|^2d\mu=\int_{\Omega} |mf|^2d\mu.
\end{align*}
And so, $\operatorname{dom}(\int_\mathbb{R}\operatorname{id}d\hat{P})=\operatorname{dom}(T)$. Furthermore, for any $f\in\operatorname{dom}(T)$:
\begin{align*}
	(\int_{\mathbb{R}}\operatorname{id}d\hat{P} f,f)=\int_{\mathbb{R}}\operatorname{id}d\hat{\mu}_{f,f}=\int_{\Omega}(\operatorname{id}\circ m)|f|^2d\mu=(m\cdot f,f)=(Tf,f).
\end{align*}
Therefore, since both maps are symmetric, $\int_{\mathbb{R}}\operatorname{id}d\hat{P}=T$, and so $\hat{P}$ forms a spectral resolution for $T$. 

We now show that $\hat{P}$ can be pulled back through $U$.

\begin{prop}
	For any $V\in\operatorname{Borel}(\mathbb{R})$ and $x\in U(H)$, $\hat{P}(V)x\in U(H)$.
\end{prop}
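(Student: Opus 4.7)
The idea is to start from Proposition \ref{sa_reduction}, which tells us $U(H)$ reduces $T$, and then to bootstrap this reducing property up to all spectral projections $\hat{P}(V)$ via a functional-calculus argument. Throughout, write $P := UU^*$ for the orthogonal projection onto $U(H)$.

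First, I would prove that the bounded self-adjoint operators $R_1 := (T^2+I)^{-1} = T_{1/(m^2+1)}$ and $R_2 := T R_1 = T_{m/(m^2+1)}$ both preserve $U(H)$. Since $U(H)$ reduces $T$ it also reduces $T^2+I$, so $P$ maps $\operatorname{dom}(T^2+I)$ to itself and $(T^2+I)P = P(T^2+I)$ on that domain. Given $y \in L_2(\Omega)$, applying this commutation to $R_1 y \in \operatorname{dom}(T^2+I)$ and using injectivity of $T^2+I$ yields $PR_1 = R_1 P$ as bounded operators, so $R_1$ preserves $U(H)$. For $y \in U(H)$, the vector $R_1 y$ lies in $U(H) \cap \operatorname{dom}(T)$, and the reducing property of $T$ then gives $R_2 y = T R_1 y \in U(H)$.

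Second, I would appeal to Stone-Weierstrass. The real unital algebra $\mathcal{A}$ generated by $f_1(x) = 1/(x^2+1)$ and $f_2(x) = x/(x^2+1)$ separates points of the one-point compactification $\mathbb{R} \cup \{\infty\}$ (with both $f_1, f_2$ vanishing at $\infty$): the value $f_1(x)$ determines $|x|$, $f_2(x)$ determines the sign, and the constants separate finite points from $\infty$. Stone-Weierstrass then gives uniform density of $\mathcal{A}$ in $C(\mathbb{R} \cup \{\infty\})$, and in particular every $f \in C_0(\mathbb{R})$ is a uniform limit of elements of $\mathcal{A}$. Since for $p \in \mathcal{A}$ the operator $T_{p \circ m}$ is a polynomial in $R_1, R_2, I$, it preserves $U(H)$; combined with the estimate $\|T_{g \circ m}\|_{\mathrm{op}} \leq \|g\|_\infty$ and closedness of $U(H)$, this yields that $T_{f \circ m}$ preserves $U(H)$ for every $f \in C_0(\mathbb{R})$.

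Third, I would package this into a $\sigma$-algebra argument. Let $\mathcal{M} := \{V \in \operatorname{Borel}(\mathbb{R}) : \hat{P}(V) U(H) \subseteq U(H)\}$. It contains $\mathbb{R}$, and is closed under complements via $\hat{P}(V^c) = I - \hat{P}(V)$, under finite intersections via $\hat{P}(V \cap W) = \hat{P}(V)\hat{P}(W)$, and under countable disjoint unions via the $L_2$-strong $\sigma$-additivity of $\hat{P}$ together with closedness of $U(H)$; hence it is a $\sigma$-algebra. For an open set $V \subseteq \mathbb{R}$, picking $f_n \in C_c(\mathbb{R})$ with $0 \leq f_n \nearrow \mathbf{1}_V$ and applying the dominated convergence theorem gives $T_{f_n \circ m} x \to \hat{P}(V) x$ in $L_2(\Omega)$ for every $x \in U(H)$, with each $T_{f_n \circ m} x \in U(H)$ by step two, so $\hat{P}(V) x \in U(H)$. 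Thus $\mathcal{M}$ contains the open sets and therefore all Borel sets. The main obstacle is the first step: carefully handling domains of the unbounded operator $T^2+I$ to promote a commutation on $\operatorname{dom}(T^2+I)$ into a genuine commutation of bounded operators $P R_1 = R_1 P$; the rest is standard functional-calculus approximation.
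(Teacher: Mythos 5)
Your proof is correct, and it follows a genuinely different route than the paper's. The paper proceeds by truncation: for $f\in U(H)$ it sets $f_k'=\mathbf{1}_{m^{-1}([-k,k])}f$ and $f_k=\operatorname{proj}_{U(H)}f_k'$, shows inductively that $f_k'\in\operatorname{dom}(T^n)$ and that $T^nf_k\in U(H)$ for every $n$ (using that $U(H)$ reduces each power $T^n$), then applies Stone--Weierstrass on the compact interval $[-k,k]$ to produce uniformly bounded polynomials $p_n\to\mathbf{1}_{[a,b]}$ pointwise, feeds this into dominated convergence, and finally lets $k\to\infty$ via monotone convergence; the extension from closed intervals to all Borel sets is done by a uniqueness argument for the finite signed measures $\hat{\mu}_{f,g}$ rather than on the sets themselves. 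Your proof avoids truncation entirely: the resolvent-type bounded operators $R_1=(T^2+I)^{-1}$ and $R_2=T(T^2+I)^{-1}$ inherit the reducing property from $T$ as genuine bounded commutations with $\operatorname{proj}_{U(H)}$, and Stone--Weierstrass on the one-point compactification $\mathbb{R}\cup\{\infty\}$ then gives preservation of $U(H)$ under $T_{f\circ m}$ for all $f\in C_0(\mathbb{R})$ at once; the passage to Borel sets is a Dynkin-style $\sigma$-algebra argument on the collection of good sets. The resolvent route buys you the luxury of working throughout with bounded operators that genuinely commute with the projection, which tidies the domain bookkeeping and removes the need for the auxiliary projections $f_k=\operatorname{proj}_{U(H)}f_k'$ and the induction through $T^n$; the paper's truncation route is more hands-on and localizes all the analysis to compact spectral windows, at the cost of those extra steps. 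Both arguments are valid, and both rest on the same foundation, namely Proposition~\ref{sa_reduction}, Stone--Weierstrass, dominated convergence, and the closedness of $U(H)$.
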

\begin{proof}
	It is sufficient to show the property for intervals of the form $[a,b]$ for $a,b\in\mathbb{R}$. Indeed, if that can be shown,  then for any $f\in U(H)$, $g\in U(H)^\perp$, we have $\hat{\mu}_{f,g}([a,b])=0$ for any real $a$ and $b$. Since such intervals generate $\operatorname{Borel}(\mathbb{R})$, and the measure is finite, we get, for any Borel set $V$, $\hat{\mu}_{f,g}(V)=0$, and so $\hat{P}(V)f\in(U(H)^\perp)^\perp=U(H)$.
	
	So, let $f\in U(H)$ and $a,b\in\mathbb{R}$. For $k\in\mathbb{N}$, let $f_k'=\mathbf{1}_{m^{-1}([-k,k])}f$ and $f_k=\operatorname{proj}_{U(H)}f_k'$. Then, for any $n\in\mathbb{N}$:
	\begin{align*}
		\int_{\Omega}|m^n\cdot f_k'|^2d\mu&=\int_{m^{-1}([-k,k])}|m|^{2n} |f|^2d\mu\leq k^{2n}\|f\|^2.
	\end{align*}
	
	Therefore, inductively, for any natural $n$, we have $f_k'\in\operatorname{dom}(T^n)$. Furthermore, still using induction, we can show $U(H)$ reduces for $T^n$ as well. Thus, for any $n\in\mathbb{N}$, $f_k\in\operatorname{dom}(T^n)$ and $m^n\cdot f_k=T^n f_k=\operatorname{proj}_{U(H)}T^n f_k'\in U(H)$. Using linearity, we have, for any real polynomial $p$, $(p\circ m)\cdot f_k=\operatorname{proj}_{U(H)}(p\circ m\cdot f_k')\in U(H)$.
	
	Using continuous functions and Stone-Weierstrass theorem as intermediaries, we can find a sequence $(p_n)_{n\in\mathbb{N}}$ of real polynomials such that $|p_n(x)|\leq 2$ for every $x\in[-k,k]$, and $p_n(x)\rightarrow \mathbf{1}_{[a,b]}(x)$ for every $x\in\mathbb{R}$. Since $(p_n\circ m)\cdot f_k'$ is dominated by $2|f_k'|$, it converges to $(\mathbf{1}_{[a,b]}\circ m)\cdot f_k'= \mathbf{1}_{m^{-1}([a,b])}\cdot f_k'$ in $L_2(\Omega)$ by the dominated convergence theorem.
	
	Then, since $\operatorname{proj}_{U(H)}$ is a bounded operator, we have $(p_n\circ m)\cdot f_k$ converges to $\operatorname{proj}_{U(H)}\mathbf{1}_{m^{-1}([a,b])}\cdot f_k'$ in $L_2(\Omega)$. But we also have that $(p_n\circ m)\cdot f_k$ converges punctually to $\mathbf{1}_{m^{-1}([a,b])}\cdot f_k$. Since any convergent sequence in $L_2(\Omega)$ has a subsequence that converges punctually almost everywhere to the $L_2$ limit, we conclude that as $L_2$ functions, $\mathbf{1}_{m^{-1}([a,b])}\cdot f_k=\operatorname{proj}_{U(H)}\mathbf{1}_{m^{-1}([a,b])}\cdot f_k'\in U(H)$.
	
	But then, since $f=\lim_{k\rightarrow\infty}f_k'$ in $L_2(\Omega)$ by the monotonic convergence theorem, we have 
	\begin{align*}
		f=\operatorname{proj}_{U(H)}f&=\lim_{k\rightarrow\infty}f_k\\
		\mathbf{1}_{m^{-1}([a,b])}\cdot f&=\lim_{k\rightarrow\infty}\mathbf{1}_{m^{-1}([a,b])}f_k.
	\end{align*}
	
	And so we conclude $\mathbf{1}_{m^{-1}([a,b])}\cdot f\in U(H)$.
\end{proof}

We then show, proving the theorem of this section:
\begin{theorem}[Induced spectral resolution]
	We have that the function $P:=U^*\hat{P}(\cdot)U$ is a well-defined projection-valued measure on  $H$, forming a spectral resolution of $A$.
\end{theorem}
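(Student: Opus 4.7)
The plan is to show that every structural property of $\hat{P}$ descends to $P$ through the isometric embedding $U$, using critically that each $\hat{P}(V)$ preserves the subspace $U(H)$. Since $U$ is an isometry, $U^*U=\operatorname{id}_H$ and $UU^*=\operatorname{proj}_{U(H)}$. The previous proposition gives $\hat{P}(V)(U(H))\subseteq U(H)$ for every Borel $V$, and since $\hat{P}(V)$ is self-adjoint, it also preserves $U(H)^\perp$. Thus $U(H)$ reduces $\hat{P}(V)$, and $P(V)$ is essentially the unitary transport of the restriction $\hat{P}(V)|_{U(H)}$ back to $H$.

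First, I verify that each $P(V)$ is an orthogonal projection on $H$. Self-adjointness is immediate from $P(V)^*=U^*\hat{P}(V)^*U=P(V)$. For idempotence, for any $x\in H$, $\hat{P}(V)Ux\in U(H)$, so $UU^*\hat{P}(V)Ux=\hat{P}(V)Ux$; hence $P(V)^2=U^*\hat{P}(V)UU^*\hat{P}(V)U=U^*\hat{P}(V)^2U=P(V)$. The boundary cases $P(\emptyset)=0$ and $P(\mathbb{R})=U^*U=\operatorname{id}_H$ are direct. Multiplicativity follows by the same trick: for $x\in H$, $\hat{P}(V_2)Ux\in U(H)$, so
\begin{align*}
P(V_1)P(V_2)x&=U^*\hat{P}(V_1)(UU^*)\hat{P}(V_2)Ux\\&=U^*\hat{P}(V_1)\hat{P}(V_2)Ux=P(V_1\cap V_2)x.
\end{align*}
For countable additivity, given pairwise disjoint $(V_n)_{n\in\mathbb{N}}$ and any $x\in H$, one has $\hat{P}(\bigsqcup_n V_n)Ux=\sum_n\hat{P}(V_n)Ux$ in $L_2(\Omega)$; applying the bounded operator $U^*$ preserves that convergence, yielding $P(\bigsqcup_n V_n)x=\sum_n P(V_n)x$ in $H$.

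Next, I show $A=\int_{\mathbb{R}}\operatorname{id}\,dP$. The key observation is that the scalar measures associated to $P$ pull back cleanly from those of $\hat{P}$: for all $x,y\in H$ and $V\in\operatorname{Borel}(\mathbb{R})$,
\begin{align*}
\mu_{y,x}(V)=(P(V)x,y)_H=(\hat{P}(V)Ux,Uy)_{L_2}=\hat{\mu}_{Uy,Ux}(V).
\end{align*}
In particular $\mu_{x,x}=\hat{\mu}_{Ux,Ux}$ on the whole Borel algebra. Therefore $x\in\operatorname{dom}(\int\operatorname{id}\,dP)$ iff $Ux\in\operatorname{dom}(\int\operatorname{id}\,d\hat{P})=\operatorname{dom}(T)$, which by Proposition~\ref{sa_reduction} is equivalent to $x\in\operatorname{dom}(A)$. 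For such $x$ and any $y\in H$,
\begin{align*}
(y,(\textstyle\int\operatorname{id}\,dP)x)=\int\operatorname{id}\,d\mu_{y,x}=\int\operatorname{id}\,d\hat{\mu}_{Uy,Ux}=(Uy,TUx)=(Uy,UAx)=(y,Ax).
\end{align*}
Since $y$ is arbitrary, $(\int\operatorname{id}\,dP)x=Ax$, giving the required equality of operators.

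The main obstacle is mostly bookkeeping rather than conceptual: ensuring the strong-operator convergence in countable additivity actually passes through $U^*$, and cleanly identifying $\operatorname{dom}(A)$ with the $L_2$-integrability condition against the pulled-back measure $\mu_{x,x}$. Once the identity $\mu_{y,x}=\hat{\mu}_{Uy,Ux}$ is in hand, everything else follows from the fact that $\hat{P}$ already resolves $T$ and $U$ intertwines $A$ with $T$ on its domain.
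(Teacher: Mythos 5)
Your proof is correct and follows essentially the same route as the paper's: self-adjointness from $U^{**}=U$, idempotence and multiplicativity from $\hat{P}(V)U(H)\subseteq U(H)$ (so $UU^*$ can be inserted harmlessly), countable additivity via boundedness of $L\mapsto U^*LU$, and the identity $\mu_{x,y}=\hat{\mu}_{Ux,Uy}$ together with Proposition~\ref{sa_reduction} to identify domains and conclude $\int_{\mathbb{R}}\operatorname{id}\,dP=A$. There is only a harmless notational slip in your indexing of the scalar measures (the paper's convention is $\mu_{x,y}(V)=(P(V)x,y)$, so the quantity $(P(V)x,y)$ you compute is $\mu_{x,y}(V)$, not $\mu_{y,x}(V)$), which washes out by self-adjointness and realness of $\operatorname{id}$.
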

\begin{proof}
	We must first show that for $V\in\operatorname{Borel}(\mathbb{R})$, $P(V)$ is a self-adjoint projection on $H$. $P(V)$ is indeed a well-defined operator on $H$, and since all operators involved are bounded, $P(V)^*=U^*\hat{P}(V)^*U^{**}=U^*\hat{P}(V) U=P(V)$ is self-adjoint. Furthermore, since $(\hat{P}(V)U)(H)\subset U(H)$ by the previous proposition:
	\begin{align*}
		P(V)^2&=U^* \hat{P}(V) U U^*\hat{P}(V) U=U^*\hat{P}(V) \operatorname{proj}_{U(H)}\hat{P}(V)U\\
		&=U^*\hat{P}(V)\hat{P}(V) U=U^*\hat{P}(V) U=P(V).
	\end{align*} 
	And so, $P(V)$ is a self-adjoint projection on $H$. Furthermore:
	\begin{itemize}
		\item $P(\emptyset)=U^* \circ 0 \circ U=0$, and $P(\mathbb{R})=U^* U=\operatorname{id}$.
		\item For any $V_1, V_2$ in $\operatorname{Borel}(\mathbb{R})$, 
		\begin{align*}
			P(V_1\cap V_2)=U^* \hat{P}(V_1)\hat{P}(V_2) U=U^* \hat{P}(V_1)\operatorname{proj}_{U(H)}\hat{P}(V_2) U=P(V_1)P(V_2).
		\end{align*}
		\item Since $L\rightarrow U^* L U\in B(B(L_2(\Omega)),B(H))$, we have that whenever $(V_n)_{n\in\mathbb{N}}$ is a pairwise disjoint sequence in $\operatorname{Borel}(\mathbb{R})$, $P(\bigsqcup_{n\in\mathbb{N}}V_n)=\sum_{n\in\mathbb{N}}P(V_n)$.
	\end{itemize}
	
	And so, $P$ is a projection valued measure on $H$. Furthermore, for any $x,y\in H$ and $V\in\operatorname{Borel}(\mathbb{R})$, $\mu_{x,y}(V)=(P(V)x,y)=(\hat{P}(V) Ux, Uy)=\hat{\mu}_{Ux,Uy}(V)$. Therefore, $\mu_{x,y}=\hat{\mu}_{Ux, Uy}$ holds for any $x,y\in H$. 
	
	For any $x$ in $H$,  $x\in\operatorname{dom}(\int_{\mathbb{R}}\operatorname{id}dP)$ if and only if $Ux\in\operatorname{dom}(\int_{\mathbb{R}}\operatorname{id}d\hat{P})=\operatorname{dom}(T)$, which holds if and only if $x\in\operatorname{dom}(T\circ U)=\operatorname{dom}(A)$. Therefore, $\operatorname{dom}(\int_{\mathbb{R}}\operatorname{id}dP)=\operatorname{dom}(A)$. Furthermore, for any $x\in\operatorname{dom}(A)$ and $y\in H$:
	\begin{align*}
		(\int_{\mathbb{R}}\operatorname{id}dPx,y)&=\int_{\mathbb{R}}\operatorname{id}d\mu_{x,y}=\int_{\mathbb{R}}\operatorname{id}d\hat{\mu}_{Ux,Uy}=(\int_{\mathbb{R}}\operatorname{id}d\hat{P}Ux,Uy)\\
		&=(TUx,Uy)=(UAx,Uy)=(Ax,y).
	\end{align*}
	Therefore, we have $\int_{\mathbb{R}}\operatorname{id}dP=A$, concluding the proof.  
\end{proof}
\section*{Acknowledgments}
I extend my sincere gratitude to Isaac Goldbring for providing a lot of helpful feedback, and for encouraging a proof of Theorem~\ref{spectral_theorem} (a conjecture at the time), which resulted in Section \ref{section_sa}. 

I am thankful to both of my research supervisors, Alexander Shnirelman and Alexey Kokotov, for providing helpful direction on my research leading to this paper. I thank Concordia University as well for providing the funding I needed for my research.

Finally, I am grateful to Alexis Langlois-Rémillard and Antoine Giard for providing feedback on the writing style. 
\printbibliography
\end{document}